   \def\MR#1{}
\numberwithin{equation}{section}
\newtheorem{theorem}{Theorem}[section]
\newtheorem{proposition}[theorem]{Proposition}
\newtheorem{lemma}[theorem]{Lemma}
\newtheorem{cor}[theorem]{Corollary}
\theoremstyle{definition}
\newtheorem{example}[theorem]{Example}
\newtheorem{remark}[theorem]{Remark}
\author{Gw\'ena\"el Massuyeau}
\address{IRMA,  Universit\'e de Strasbourg \& CNRS,   67084 Strasbourg, France
\newline  \& \ { {IMB}, Universit\'e  Bourgogne Franche-Comt\'e \& CNRS, 21000 Dijon, France  }}
\email{{gwenael.massuyeau@u-bourgogne.fr}}
\author{Takuya Sakasai}
\address{Graduate School of Mathematical Sciences, 
Univerisity of Tokyo, 
3-8-1 Komaba, 
Meguro-ku, Tokyo, 153-8914, Japan}
\email{sakasai@ms.u-tokyo.ac.jp}
\subjclass[2000]{Primary~20F38, Secondary~20F14; 20F28; 20F34; 57M05; 57M27; 57N10; 57N70}
\keywords{Mapping class group;   automorphism group of a free group; homology cobordism;   Johnson homomorphism; Magnus representation; Morita trace.}
\newcommand{\Z}{\ensuremath{\mathbb{Z}}}
\newcommand{\Q}{\ensuremath{\mathbb{Q}}}
\newcommand{\Ker}{\mathop{\mathrm{Ker}}\nolimits}
\newcommand{\Hom}{\mathop{\mathrm{Hom}}\nolimits}
\newcommand{\Id}{\mathop{\mathrm{id}}\nolimits}
\newcommand{\Aut}{\mathop{\mathrm{Aut}}\nolimits}
\newcommand{\IAut}{\mathop{\mathrm{IAut}}\nolimits}
\newcommand{\End}{\mathop{\mathrm{End}}\nolimits}
\newcommand{\Der}{\mathop{\mathrm{Der}}\nolimits}
\newcommand{\Out}{\mathop{\mathrm{Out}}\nolimits}
\newcommand{\Gr}{\mathop{\mathrm{Gr}}\nolimits}
\newcommand{\Sp}{\operatorname{Sp}}
\newcommand{\trace}{\operatorname{tr}}
\newcommand{\GL}{\operatorname{GL}}
\newcommand{\SL}{\operatorname{SL}}
\newcommand{\A}{\mathcal{A}}
\newcommand{\AutF}{\mathcal{A}}
\newcommand{\IAutF}{\mathcal{I\!A}}
\newcommand{\mcg}{\mathcal{M}}
\newcommand{\Torelli}{\mathcal{I}}
\newcommand{\cob}{\mathcal{C}}
\newcommand{\cyl}{\mathcal{IC}}
\newcommand{\Hcob}{\mathcal{H}}
\newcommand{\Hcyl}{\mathcal{IH}}
\newcommand{\Abel}{\mathsf{Ab}}
\newcommand{\IAbel}{\mathsf{IAb}}
\newcommand{\Magnus}{\mathsf{Mag}}
\newcommand{\IMagnus}{\mathsf{Mag}}
\newcommand{\Trace}{\mathsf{Tr}}
\newcommand{\ITrace}{\mathsf{ITr}}
\newcommand{\ldet}{\ell\mathsf{det}}
\newcommand{\Malcev}{\mathfrak{M}}
\newcommand{\freeLie}{\mathfrak{L}}
\newcommand{\by}[1]{\stackrel{\eqref{#1}}{=}}
\newcommand{\up}{\vspace{-0.6cm}}
\title[Morita's trace maps on the group of homology cobordisms]
{Morita's trace maps on the group of homology cobordisms}
\date{September 18, 2018}
\begin{document}

\begin{abstract}
Morita introduced in 2008 a $1$-cocycle  on the group of homology cobordisms 
 of surfaces  with values in an infinite-dimensional vector space. 
His  $1$-cocycle  contains all the ``traces'' of Johnson homomorphisms 
which he introduced fifteen years earlier in his study of the mapping class group. 
In this paper, we propose a new version of Morita's $1$-cocycle based on a simple and explicit construction. 
Our $1$-cocycle is proved to satisfy several fundamental properties, 
including a  connection with the Magnus representation and the LMO homomorphism.
As an application, we show that the rational abelianization of the  group of homology cobordisms is non-trivial.
Besides, we  apply some of our algebraic methods to compare two natural filtrations on the automorphism group of a finitely-generated free group. 
\end{abstract}

\maketitle

\setcounter{tocdepth}{1}
\tableofcontents


\section{Introduction}\label{sec:intro}

Let $\Sigma$ be a compact connected oriented surface of genus $g$ with one boundary component.
The {\it mapping class group}  $\mcg:= \mcg(\Sigma)$ of $\Sigma$ is the group of isotopy classes of self-homeomorphisms of $\Sigma$ 
which fix the boundary  $\partial \Sigma$ pointwisely. 
A~fruitful approach of the mapping class group, which has been developed by Johnson \cite{johnson_survey} and Morita \cite{morita}, 
consists in studying the group $\mcg$ through its action on the fundamental group $\pi:=\pi_1(\Sigma,\star)$ of the surface $\Sigma$ based at a point $\star \in \partial \Sigma$. 
Specifically, this approach is based on the \emph{Johnson filtration} which is a decreasing sequence of subgroups of the mapping class group:
$$
\mcg = \mcg[0] \supset  \mcg[1] \supset  \cdots \supset    \mcg[k] \supset \mcg[k+1] \supset \cdots
$$
For every  $k\geq 1$,  the group $\mcg[k]$ consists of 
(the isotopy classes of) the self-homeomor\-phisms of 
$\Sigma$ acting trivially on the $k$-th nilpotent quotient  of $\pi$. 
For instance, $\Torelli:= \mcg[1]$ is the subgroup of $\mcg$ acting trivially 
on the homology of~$\Sigma$, which is known as the \emph{Torelli group} of~$\Sigma$.
For every  $k\geq 1$, there is a  group homomorphism 
$\tau_k:\mcg[k] \to \mathfrak{h}_k^\Z$ with kernel $\mcg[k+1]$, 
which encodes the action of $\mcg[k]$ on the $(k+1)$-st nilpotent quotient  of $\pi$ and is referred 
to as  the  {\it $k$-th Johnson homomorphism}.
Here $\mathfrak{h}_k^\Z$ is a  free abelian group which consists of  certain tensors of degree 
$(k+2)$ in $H^\Z:=H_1(\Sigma;\Z)$, 
and a fundamental problem is then to compute the image of $\tau_k$. 
The first result in this direction is due to Morita: he showed that a certain ``trace'' map 
$\ITrace_k:\mathfrak{h}_k^\Z \to S^k( H^\Z)$, 
with values in the degree $k$ part of the symmetric algebra $S( H^\Z )$, vanishes on the image of $\tau_k$ 
and is rationally surjective for every odd $k\geq 3$.

The mapping class group $\mcg$ can be regarded as an object  of study of $3$-dimen\-sional topology 
via its embedding into the group of homology cobordisms.
A~\emph{cobordism} from $\Sigma$ to itself is a compact connected oriented $3$-manifold $M$ coming with two maps $m_+: \Sigma \to M$ and $m_-: \Sigma \to M$, 
such that  the oriented surface $\partial M$ decomposes into $-m_-(\Sigma)$, $m_+(\Sigma)$ 
and a copy of the annulus $\partial \Sigma \times [-1,1]$ 
connecting $m_-(\partial \Sigma)$ to $m_+(\partial \Sigma)$,
which we call the \emph{vertical boundary}. 
Two cobordisms $M,M'$ are considered to be \emph{equivalent} 
if there is an orientation-preserving  homeomorphism $h:M\to M'$ 
whose restriction to the boundary satisfies $h\circ m_\pm=m'_\pm$ 
and corresponds to the identity of $\partial \Sigma \times [-1,1]$ on the vertical boundary. 
A~cobordism $M$  is called a \emph{homology cobordism} if  
the boundary-parametrizations $m_+$ and $m_-$ induce isomorphisms in integral homology. 
The set $\cob:=\cob(\Sigma)$ of (equivalence classes~of) 
homology cobordisms from $\Sigma$ to itself is a monoid: 
the multiplication law is defined by gluing cobordisms in the usual way, and the unit element is the trivial cobordism $\Sigma \times [-1,1]$ with the obvious boundary parametrizations. 
Furthermore, two homology cobordisms $M,M' \in \cob$ are said to be \emph{homology cobordant} 
if they are in turn related by a $4$-dimensional oriented homology cobordism.
The quotient $\Hcob:= \Hcob(\Sigma)$ of the monoid $\cob$ by this equivalence relation
is actually a group and is called the  {\it group of homology cobordisms}  of $\Sigma$.
In fact, there are two versions of this group, namely $\Hcob_{\operatorname{smooth}}$ and  $\Hcob_{\operatorname{top}}$,
depending on whether we are considering smooth or topological $4$-manifolds.
For instance, when $g=0$, the group $\Hcob$ is naturally identified with the ``homology cobordism group'' of oriented integral homology 3-spheres. 
The topological version of this group is trivial  by a result of Freedman \cite{Freedman}.
But,  in the smooth case,  this group  denoted by $\Theta^3$  is known to be highly non-trivial by works of Fintushel and Stern \cite{fs} and Furuta \cite{furuta}.
Thus, the group $\Hcob_{\operatorname{smooth}}$ can be viewed as a higher genus version of $\Theta^3$.
In the sequel, since our considerations apply to $\Hcob_{\operatorname{smooth}}$ as well as  $\Hcob_{\operatorname{top}}$,
 we will simply use the notation $\Hcob$. 

The ``mapping cylinder'' construction gives a group homomorphism $\mcg \to \Hcob$, which turns out to be injective:
thus one can view $\mcg$ as a subgroup of~$\Hcob$. 
This viewpoint on the mapping class group, which originates from the works of Habiro \cite{habiro} and Garoufalidis--Levine \cite{gl} on finite-type invariants of $3$-manifolds,
gave new perspectives on the Johnson--Morita approach of the mapping class group.
Indeed, the Johnson filtration can be defined for the group of homology cobordisms~too:
$$
\Hcob = \Hcob[0] \supset  \Hcob[1] \supset  \cdots \supset    \Hcob[k] \supset \Hcob[k+1]  \supset \cdots.
$$
In particular, $\Hcyl:= \Hcob[1]$ consists of the classes $M\in \Hcob$ 
that have the same homology type as $\Sigma\times [-1,1]$:
hence, it is called  the \emph{group of homology cylinders}. 
For every $k\geq 1$, the $k$-th Johnson homomorphism extends to a group homomorphism $\tau_k: \Hcob[k] \to \mathfrak{h}_k^\Z$ with kernel $\Hcob[k+1]$, 
which is shown to be surjective in \cite{gl,habegger}.
Thus, the problem of determining the image of $\tau_k: \mcg[k] \to \mathfrak{h}_k^\Z$ 
is deeply related to the recognition of elements of $\mcg$ inside $\Hcob$.

In this spirit, Morita has considered in \cite{morita_GD} a generalization of his ``trace'' maps 
to the group of homology cobordisms. 
Specifically, he produced a single group $1$-cocycle on $\Hcob$  whose restriction to $\Hcob[1]=\Hcyl$ contains $\tau_1$
and whose restriction to $\Hcob[k]$ contains the composition $\ITrace_k \circ \tau_k$  for every odd $k\geq 3$.
This group $1$-cocycle takes values in an infinite-dimensional $\Q$-vector space 
in which it has a ``Zariski-dense'' image.
As an application, he deduced that the abelianization of the group $\Hcyl$ has an infinite rank
and he was able to construct some (possibly non-trivial) cohomology classes in $\Hcob$.
Morita's construction, which uses  some elements of the theory of algebraic groups, 
is somehow intricate in that it involves several isomorphisms of Lie algebras which are not always explicit.

In this paper, we propose a new version of Morita's ``generalized trace'' on the group $\Hcob$.
Although simpler, our construction uses the same kinds of ingredients as Morita's definition:
it is based on the action of the group $\Hcob$  on the Malcev completion of $\pi$,
and it needs a ``symplectic expansion'' $\theta$ of the free group  $\pi$ which is also implicit in his paper \cite{morita_GD}. 
The result is a group $1$-cocycle $\Abel^\theta$ on $\Hcob$, whose definition  depends explicitly on $\theta$. 
Taking advantage of our simpler definition, 
we are able to prove some additional properties for this version of Morita's $1$-cocycle: 
the restriction $\IAbel$ of $\Abel^\theta$ to $\Hcyl $ is shown to be canonical (i.e$.$ independent of $\theta$), 
$\Hcob$-equivariant and invariant under stabilization of the surface $\Sigma$. 
We show that $\IAbel$ is dominated by  the tree-reduction of the LMO homomorphism discussed in \cite{massuyeau_IMH},
so that $\IAbel$ decomposes as an infinite sequence of finite-type invariants of strictly-increasing degrees.
Moreover, we relate $\IAbel$ to the determinant of the  Magnus representation of $\Hcyl$, 
which generalizes another result of Morita  for the Torelli group $\Torelli$ \cite{morita}. 
Finally, a result of Conant, Kassabov and Vogtmann \cite{CKV} reveals that
the target of our $1$-cocycle is much larger than Morita's original one. 

We apply our construction to the study of the abelianization of $\Hcob$.
It is known by a result of Cha, Friedl and Kim \cite{cfk} 
that the abelianization of $\Hcob$ for any $g \ge 1$ includes $(\Z/2\Z)^\infty$ as a direct summand. 
This was proved using the duality properties of the Reidemeister torsion for homology cobordisms, 
and left open the question of the triviality of the \emph{rational} abelianization of $\Hcob$.
By very different methods, we show  that $(\Hcob /[\Hcob,\Hcob])\otimes_{\Z} \Q \neq 0$ for all genus ${g \ge 1}$.
Indeed, using some algebraic results of Morita, Suzuki and the second-named author \cite{mss4,mss5},
we derive from the $1$-cocycle $\Abel^\theta$ a non-trivial group homomorphism $\widetilde{I}_{k}: {\Hcob \to \Q}$.
To be more specific, $ \widetilde{I}_{k}$ is a finite-type invariant of degree~$k$, 
with $k\in \{6,10\}$ for $g=1$ and $k=12$ for $g\geq 2$. 
Although it vanishes on the mapping class group $\mcg$, the  homomorphism 
$\widetilde{I}_{k}$ is determined by the action of $\Hcob$ 
on the Malcev completion of the $(k+1)$-st nilpotent quotient of $\pi$. 
(In particular, when $\Hcob= \Hcob_{\operatorname{smooth}}$, this homomorphism vanishes on the copy of $\Theta^3$ that $\Hcob$ naturally contains.) 

In fact, our approach is general enough to be applied to some other situations.
For instance, we will  consider the case of the automorphism group $\Aut(F)$ of a finitely-generated free group $F$.
The analogue of the Johnson filtration for this group is called the  {\it Andreadakis filtration}. 
As a side-product of our constructions, we obtain a comparison result between this filtration 
and the lower central series of the kernel of the canonical homomorphism $\Aut(F)\to \Aut(F/[F,F])$; 
the same result has been obtained around the same time by Bartholdi, with a rather similar strategy of  proof: 
see  \cite{bartholdi}. 

Besides, parallel to our study of the group of homology cobordisms~$\Hcob$,
we  may  have  also considered the concordance group  $\mathcal{S}:= \mathcal{S}_n$ 
of $n$-strand string-links in a thickened disk.
Indeed, each of the notions that we have mentioned 
so far for homology cobordisms has an exact analogue for string links:
\begin{center} 
\begin{tabular}{c|c}
group  $\Hcob$ & group  $\mathcal{S}$  \\
\hline
{\small mapping class group of $\Sigma$ } & {\small pure braid group on $n$ strands } \\
{\small Johnson homomorphisms } & {\small Milnor's $\mu$-invariants } \\
{\small Johnson filtration } & {\small Milnor filtration }\\
{\small  symplectic expansions }& {\small ``special'' expansions } {\footnotesize (see \cite{at,massuyeau_FDTO})} \\
{\small LMO homomorphism } &{\small Kontsevich integral }
\end{tabular} 
\end{center}
However, for the sake of brevity, we shall not  discuss string-links in this paper.\\

\noindent
{\bf Organization of the paper.} 
We begin by preparing some algebraic tools in Sections~\ref{sec:free}--\ref{sec:abel}. 
Our methods being infinitesimal by nature, we first recall some well-known facts  about Malcev Lie algebras of free groups.
Then we review the definition of Morita's trace and its generalization by Satoh,
proving that the latter defines a $1$-cocycle on the Lie algebra of derivations.
Next we introduce two maps on  the automorphism group of a complete free Lie algebra:
the ``abelianization map'' and a kind of  ``Magnus representation'', which are related one to the other by two sorts of trace maps. 
In Section \ref{sec:autFn}, we give a first application of our algebraic tools 
to the comparison of two filtrations of $\Aut (F)$. 
The topological applications start from Section \ref{sec:h_cob}: 
an infinitesimal version of the Dehn--Nielsen representation, combined to the above-mentioned  ``abelianization map'', 
gives rise to the $1$-cocycle  $\Abel^\theta$ 
which we relate to the LMO homomorphism.
The study of $\Abel^\theta$ continues in Section \ref{sec:trace_h_cob},
where the relation with the classical Magnus representation is established. 
In Section \ref{sec:abelian}, we use the $1$-cocycle  $\Abel^\theta$  to produce a non-trivial rational 
abelian quotient of the group $\Hcob$ for all $g \ge 1$. 
Appendix \ref{sec:cyclic_things} discusses a noncommutative version of the  log-determinant function.\\ 

\noindent
{\bf Notation and conventions.} Unless explicitly stated otherwise, the ground ring is the field $\Q$ of rational numbers.
Given a vector space $V$ (over $\Q$), we denote by $V^*$ the dual space $\Hom(V,\Q)$.
If $V$ is equipped with a decreasing filtration  of subspaces $(F_kV)_{k}$ indexed by the natural integers, then the vector space $V$ is given the corresponding topology
and its \emph{completion} is  $\widehat V :=  \varprojlim_{k}  V/F_kV$;
the filtration is said to be \emph{complete} if the canonical map $V \to \widehat V$ is an isomorphism.
If $V=\bigoplus_{i\geq 0} V_i$ is a graded vector space,
then it is equipped with the degree-filtration $(V_{\geq k})_k$ where $V_{\geq k}:=\bigoplus_{i\geq k} V_i$: 
the corresponding completion $\widehat V = \prod_{i \geq 0} V_i $ 
is called  the \emph{degree-completion}.\\

\noindent
{\bf Acknowledgements.} 
Some of the results of this paper have been obtained in 2011 
while the second-named author visited the University of Strasbourg, whose hospitality is gratefully acknowledged. 
The authors would like to thank Athanase Papadopoulos and Ken-ichi Ohshika for giving 
chances to keep on working through the workshops held in 2014 and 2015 
in the framework of a CNRS--JSPS joint seminar. 
Thanks are also due to  Nariya Kawazumi for informing the authors 
that he also  studied some commutative analogues of  
the ``Magnus representations'' that are introduced in  Section~\ref{subsec:Magnus}. 
The second-named author would like to thank Shigeyuki Morita and Masaaki Suzuki for helpful discussions 
on the topics in Section~\ref{sec:abelian}. 
He was partially supported by KAKENHI 
(No.~15H03618 and No.~15H03619),  Japan Society for the Promotion of Science, Japan.

\section{Malcev Lie algebras of free groups}  \label{sec:free}

In this section, we briefly review some well-known material related to free groups and their Malcev Lie algebras.
The reader may consult \cite[Section~2]{massuyeau_IMH} for further informations and references. 
Next, we specialize this material to  the ``symplectic case'', which we shall need to study  groups of homology cobordisms.

\subsection{Malcev Lie algebras} \label{subsec:malcev}

Let $G$ be a group. The group algebra $\Q[G]$ of $G$ has a canonical Hopf algebra structure 
with coproduct $\Delta$, counit $\varepsilon$ and antipode $S$ defined by $\Delta(g)= g \otimes g$, $\varepsilon(g)=1$ and $S(g)=g^{-1}$, respectively, for any $g\in G$.
Let $I:= \ker(\varepsilon)$ denote the augmentation ideal of $\Q[G]$.
The $I$-adic completion 
$$
\widehat{\Q[G]} := \varprojlim_{k} \Q[G]/I^k
$$
of $\Q[G]$ is a complete Hopf algebra in the sense of \cite[Appendix A]{Quillen_rational},
whose coproduct $\hat \Delta$, counit $\hat \varepsilon$ and antipode  $\hat S$ are induced by $\Delta, \varepsilon$ and $S$ respectively.

By definition,  the \emph{Malcev Lie algebra} of $G$ is  the Lie algebra of primitive elements of $\widehat{\Q[G]}$, 
i.e$.$ we have  
$$
\Malcev(G) := \big\{ x\in \widehat{\Q[G]} : \hat \Delta(x) = x \hat \otimes 1 + 1 \hat \otimes x\big\}.
$$
The completed  $I$-adic filtration on $\widehat{\Q[G]}$ restricts to a filtration on the Lie algebra $\Malcev(G)$.
We recall how it is related  to the \emph{lower central series} of $G$
$$
G= \Gamma_1 G \supset \Gamma_2 G \supset \cdots \supset \Gamma_{k-1} G \supset \Gamma_{k} G \supset \cdots
$$
which is defined inductively by $\Gamma_k G := [\Gamma_{k-1} G, G]$ for any integer $k\geq 2$.
The logarithmic series $\log: G \to \Malcev(G)$ defined on any $g\in G$ by $\log(g) := \sum_{k\geq 1} (-1)^{k+1} (g-1)^k/k$ preserves the filtration
and it mainly follows from \cite{quillen} that $\log$ induces an isomorphism at the graded level:
\begin{equation} \label{eq:Gr_log}
(\Gr \log) {\otimes_\Z\,} \Q: (\Gr G) {\otimes_\Z\,} \Q \stackrel{\simeq}{\longrightarrow} \Gr \Malcev(G) 
\end{equation}
Note that both $(\Gr G) {\otimes_\Z\,} \Q$  and $\Gr \Malcev(G)$   are  graded Lie algebras 
(whose Lie brackets are respectively induced by the commutator in $G$ and the Lie bracket in $ \Malcev(G)$),
and  the map $(\Gr \log) {\otimes_\Z\,} \Q$ preserves these structures.

\subsection{Expansions of free groups} \label{subsec:expansion}

We now consider the case of a free group $F$ of finite rank $n$.
We set $H:=(F/[F,F]) \otimes_\Z \Q$ and denote by $\freeLie := \freeLie(H) $ the graded  Lie algebra  freely generated by $H$ in degree $1$:
for any $i\geq 1$, its degree $i$ part $\freeLie_i := \freeLie_i(H)$ consists of iterated Lie brackets in $H$ of length $i$. 
We recall how the Malcev Lie algebra $\Malcev(F)$ can be identified by means of ``expansions'' 
to the degree-completion $\widehat\freeLie := \widehat\freeLie(H)$ of $\freeLie$.

Let  $T:=T (H) = \bigoplus_{i \ge 0} H^{\otimes i}$ be the graded associative algebra freely generated by $H$ in degree $1$, 
and let $\widehat T := \widehat T (H)$ denote the degree-completion of $T$. 
Note that $\widehat T$ is a complete Hopf algebra whose Lie algebra of primitive elements is~$\widehat \freeLie$. 
An {\it expansion} of  $F$ is a  map 
$\theta: F \to \widehat{T}$ which satisfies $\theta(xy) =\theta(x) \theta(y)$ for any $x,y\in F$,
and such that
\begin{equation} \label{eq:1+x+etc}
\theta (x)=1+[x]+\text{(higher-degree terms)}
\end{equation}
for any $x \in F$ with homology class $[x] \in H$. 
The expansion $\theta$ is   {\it group-like} if it takes group-like values. 

\begin{example}\label{ex:expansions}
Let $\gamma:=(\gamma_1,\dots, \gamma_n)$ be a basis of $F$. 
Classically, the {\it Magnus expansion}  refers to the expansion $\theta_\gamma$ of $F$ defined by $\theta_\gamma (\gamma_i) :=1+[\gamma_i]$.
 Another example is the expansion $\theta_\gamma^{\exp}$ of $F$  given by 
\[\theta_\gamma^{\exp} (\gamma_i): = \exp([\gamma_i])= \sum_{k=0}^\infty  \frac{[\gamma_i]^k}{k!} .\]
Note that, contrary to $\theta_\gamma$, the expansion $\theta_\gamma^{\exp}$ is group-like. \hfill $\blacksquare$
\end{example}

Any expansion $\theta: F \to \widehat T$ extends to a complete algebra homomorphism $\widehat \theta: \widehat{\Q[F]} \to   \widehat T$
and, by condition \eqref{eq:1+x+etc}, $\widehat \theta$ is an isomorphism. If $\theta$ is group-like, then $\widehat \theta$ preserves the Hopf algebra structures, 
so that it restricts to a filtration-preserving Lie algebra isomorphism
\begin{equation}\label{eq:theta_Lie}
\widehat{\theta}: \Malcev(F)  \stackrel{\simeq}{\longrightarrow} \widehat \freeLie.
\end{equation}
Note that the composition of $\Gr \widehat{\theta}: \Gr \Malcev(F) \to \Gr  \widehat\freeLie = \Gr \freeLie= \freeLie$ 
with the isomorphism $(\Gr \log ) {\otimes_\Z\,} \Q: (\Gr F) {\otimes_\Z\,} \Q \to \Gr \Malcev(F)$ is the rational form of the  canonical  isomorphism of graded Lie rings
\begin{equation}   \label{eq:free_free}
\Gr F =\bigoplus_{k=1}^\infty \frac{\Gamma_k F}{ \Gamma_{k+1} F}
\stackrel{  \simeq } {\longrightarrow}\freeLie^\Z  = \bigoplus_{k=1}^\infty \freeLie_k^\Z
\end{equation}
given by the identity in degree $1$.
Here $\freeLie^\Z :=\freeLie^\Z (H^\Z)$ denotes the graded Lie ring freely generated by $H^\Z:= F/[F,F]$ in degree $1$.

\subsection{Symplectic expansions} \label{subsec:symp_expansion}

We now assume that $F$ is  the fundamental group $\pi:=\pi_1 (\Sigma,\star)$ of a compact oriented connected surface $\Sigma$  
of genus $g$ with one boundary component. The base point $\star$ is fixed on $\partial  \Sigma$. 
The homology intersection on $\Sigma$ defines a symplectic form on $H_1(\Sigma;\Q)\simeq H$ which, by the resulting duality $H \simeq H^*$, 
can also be regarded as an element $\omega \in \wedge^2 H \subset H^{\otimes 2}$.
An expansion $\theta$ of $\pi$ is  {\it symplectic} if it is group-like and  maps 
$\zeta := [\partial \Sigma] \in \pi$ to $\exp({-\omega})$.
(Here the boundary curve $\partial \Sigma$ has the orientation inherited from $\Sigma$.)

Let $\gamma :=  (\alpha_1,\dots,\alpha_g,\beta_1,\dots,\beta_g)$ be  a basis of $\pi$ given by a system of meridians and parallels on the surface $\Sigma$.
Then the expansion $\theta^{\exp}_\gamma$ of Example~\ref{ex:expansions} can be modified degree-after-degree to produce an example of symplectic expansion \cite[Lemma 2.16]{massuyeau_IMH}.
(See also \cite[Lemma~3.6]{morita_GD} for a similar result and \cite{kuno} for a more general construction.)

\section{Morita's trace map and its variants}  \label{sec:traces}

In this section, we review Morita's trace map  on the Lie algebra of derivations of a free Lie algebra \cite{morita} following the generalization proposed by Satoh \cite{satoh}.
 We also present some  variants of this construction.

\subsection{Automorphisms of free Lie algebras} \label{subsec:automorphisms}

Let $\freeLie:=\freeLie(H)$ be the graded free Lie algebra generated by a finite-dimensional $\Q$-vector space  $H$ in degree~$1$.
We denote by $\Aut(\widehat{\freeLie})$ the group of filtration-preserving automorphisms of the  degree-completion $\widehat{\freeLie}$.

Let $\mathfrak{R}$ be a characteristic ideal of the Lie algebra ${\freeLie}$ and denote by $\widehat{\mathfrak{R}}$ the closure of $\mathfrak{R}$ in $\widehat{\freeLie}$.
We denote by $\Aut^{{\mathfrak{R}}}(\widehat{\freeLie})$ the subgroup of $\Aut(\widehat{\freeLie})$
consisting of the automorphisms that induce the identity at the level of $\widehat{\freeLie}/\widehat{\mathfrak{R}}$.
In particular, for $\mathfrak{R}:= \freeLie_{\geq 2}$, we have $\widehat{\mathfrak{R}}= \widehat{\freeLie}_{\ge 2} $ so that  $\Aut^{{\mathfrak{R}}}(\widehat{\freeLie})$
is the group $\IAut(\widehat{\freeLie})$ of  automorphisms that induce the identity at the graded level. 
There is a short exact sequence of groups
\begin{equation}   \label{eq:IAut(L)}
\xymatrix{
1 \ar[r] & \IAut(\widehat{\freeLie}) \ar[r]  & \Aut(\widehat{\freeLie})  \ar[r]^\sigma & \GL(H) \ar[r] & 1,
}
\end{equation}
where the general linear group $\GL(H)$ is identified with the group of 
automorphisms of $ \widehat{\freeLie}/\widehat{\freeLie}_{\geq 2}$.

\subsection{Derivations of free Lie algebras} \label{subsec:derivations}

We denote by  $\Der(\widehat{\freeLie}, \widehat{\freeLie})$ 
the Lie algebra of filtration-preserving derivations of $\widehat{\freeLie}$.
It contains, as an ideal, the Lie algebra $\Der(\widehat{\freeLie}, \widehat{\freeLie}_{\ge 2})$ of derivations with values 
in $\widehat{\freeLie}_{\ge 2}$. It is well-known that the logarithmic series 
$$
\log(\psi) = \sum_{n =1}^\infty (-1)^{n+1}  \frac{ \big(  \psi-  \Id_{\widehat{\freeLie}} \big)^n }{n} 
$$
establishes a one-to-one correspondence $\IAut(\widehat{\freeLie}) \stackrel{\simeq }{\longrightarrow}\Der(\widehat{\freeLie}, \widehat{\freeLie}_{\ge 2})$
whose inverse map is given by the exponential series
$$
\exp(\delta) =  \sum_{n=0}^\infty  \frac{\delta^n}{n!}. 
$$
(See \cite[Proposition 5.12]{massuyeau_IMH}, for instance.)
Furthermore, for any characteristic ideal $\mathfrak{R}$ of  ${\freeLie}$ contained in ${\freeLie}_{\ge 2}$, 
this correspondence sends the subgroup $\Aut^{{\mathfrak{R}}}(\widehat{\freeLie})$ 
to the Lie subalgebra $\Der(\widehat{\freeLie}, \widehat{\mathfrak{R}})$ of $\widehat{\mathfrak{R}}$-valued derivations.

The canonical action of $\GL(H)$ on $H$ extends to an action of $\GL(H)$ on the Lie algebra $\widehat{\freeLie}$.
Hence we can regard $\GL(H)$ as a subgroup of $\Aut(\widehat \freeLie)$,
which provides a canonical section to the short exact sequence \eqref{eq:IAut(L)}.
Besides, there is a canonical action of   $\GL(H)$ 
on the Lie algebra $\Der(\widehat{\freeLie}, \widehat{\freeLie})$ given by $(A,\delta) \mapsto A \circ \delta \circ A^{-1}$.
Clearly, for any  characteristic ideal $\mathfrak{R} \subset{\freeLie}_{\ge 2}$,
the Lie subalgebra  $\Der(\widehat{\freeLie}, \widehat{\mathfrak{R}})$ is preserved by this action.

Clearly a filtration-preserving derivation of $\widehat \freeLie$ is determined by its restriction to $H= \freeLie_1$.
Hence we can identify the vector space $\Der(\widehat{\freeLie}, \widehat{\freeLie})$ to $\Hom(H,\widehat\freeLie)$
and, similarly,  the vector space $\Der({\freeLie}, {\freeLie})$ of  derivations of $\freeLie$ can be identified to  $\Hom(H,\freeLie)$.
Therefore, by the decomposition
$$
\Hom(H,\widehat\freeLie) =  \prod_{d=0}^\infty  \Hom(H, \freeLie_{d+1}), 
$$
we can regard $\Der(\widehat{\freeLie}, \widehat{\freeLie})$ as the degree-completion of $\Der(\freeLie, \freeLie)$. 
Here a derivation $\delta: \freeLie \to \freeLie$ is  homogeneous of \emph{degree} $d$ if it sends $H$ to $\freeLie_{d+1}$.

\subsection{The trace cocycle} \label{subsec:trace}

Let $T:= T(H)$ and denote by $[T,T]$ the subspace of $T$ spanned by commutators $[u,v]=uv-vu$, for all $u,v \in T$.
We consider the quotient space $C(H):= T/[T,T]$. 
Thus, for any integer $k\geq 1$, the degree $k$ part $C_k(H)$ of $C(H)$ consists of homogeneous tensors in $H$ of degree $k$ up to cyclic permutation of the components. 

Following Satoh \cite{satoh}, we consider the degree-preserving $\GL(H)$-equivariant linear map 
$$
 \Trace: \Der(\freeLie,\freeLie) \longrightarrow {C}(H) 
$$
which, in degree $k\geq 0$, is defined by the following composition: 
$$
\xymatrix{
\Hom (H, \freeLie_{k+1}) \simeq  H^\ast \otimes \freeLie_{k+1} \subset 
H^\ast \otimes H^{\otimes (k+1)} \ar[r]^-{\operatorname{ev}} & H^{\otimes k} \ar[r]^-{\operatorname{proj}} & C_k(H).
}
$$
Here, the map ``ev'' is the tensor product of the evaluation map $H^* \otimes H \to \Q$ with the identity of $H^{\otimes k}$,
and the map ``proj'' is the canonical projection. 

\begin{example} \label{ex:trace_as_trace}
Let $\delta \in \Hom(H,H)$ and regard it as a derivation of $\freeLie$ of degree $0$. 
Then $\Trace(\delta) \in C_0(H) \simeq \Q$ is the usual trace $\trace(\delta)$ of the endomorphism~$\delta$. \hfill $\blacksquare$
\end{example}

In general, the ``trace'' $\Trace(\delta)$ of a $\delta \in \Der(\freeLie,  \freeLie)$ can be  explicitly computed as follows.
Let $n:= \dim(H)$ and fix a basis $(x_1,\dots,x_n)$ of $H$. 
We define the linear maps $\partial_i: T \to T$ for all $i\in \{1,\dots, n\}$ by requiring that
\begin{equation} \label{eq:fundamental}
\forall v \in T, \quad v - \varepsilon(v) =  \sum_{k=1}^n x_k\,  \partial_k(v).
\end{equation}
where $\varepsilon:T \to \Q$ is the counit defined by 
$\varepsilon(q \cdot 1)=q$ for all $q\in \Q$ and $\varepsilon(u)=0$ for all $u\in T_{\geq 1}$.
It follows from \eqref{eq:fundamental} that  $\partial_i$ is a ``Fox derivation'' of the augmented algebra $(T,\varepsilon)$, i.e$.$
\begin{equation} \label{eq:Fox}
\forall u, v \in T, \quad \partial_i(uv) = \partial_i(u)\, v + \varepsilon(u)\, \partial_i(v).
\end{equation}
If $(x_1^*,\dots,x_n^*)$ denotes the basis of $H^*$ dual to $(x_1,\dots,x_n)$, then 
$\partial_i(u):= \operatorname{ev}(  { x_i^*\otimes u })\in T$ for any $u\in T$. 
Thus we obtain the ``divergence'' formula
\begin{equation} \label{eq:divergence}
\Trace(\delta) = \sum_{i=1}^n \operatorname{ev}\big( x_i^* \otimes \delta(x_i) \big) =  \sum_{i=1}^n \partial_i\big( \delta(x_i)\big)
\end{equation}
for any $\delta \in \Der(\freeLie,  \freeLie) \simeq \Hom(H,\freeLie)$.

Observe that any $\delta \in \Der(\freeLie,  \freeLie)$  extends uniquely to a  derivation $\widetilde{\delta}$ of  the algebra~$T$,
and clearly $\widetilde{\delta}$ preserves the subspace  $[T,T]$. This defines an action of the Lie algebra $\Der(\freeLie,\freeLie)$ on the space ${C}(H)$.
The proof of the next lemma is delayed to  the end of this section. 

\begin{lemma} \label{lem:trace_cocycle}
The map $\Trace$ is a $1$-cocycle of the Lie algebra $\Der( \freeLie,  \freeLie)$: 
for any $\delta, \eta \in  \Der( \freeLie,  \freeLie)$,  we have 
\begin{equation} \label{eq:1-cocycle}
\Trace\left(\left[\delta, \eta\right]\right) = \delta \cdot \Trace(\eta) -  \eta \cdot \Trace(\delta).
\end{equation}
\end{lemma}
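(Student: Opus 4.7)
The plan is to reduce the cocycle identity to a clean manipulation at the level of $T = T(H)$, using the divergence formula \eqref{eq:divergence} and the Fox-derivation rule \eqref{eq:Fox}. Throughout, let $\pi : T \to C(H)$ denote the canonical projection, and recall that each derivation $\delta \in \Der(\freeLie,\freeLie)$ extends to a derivation $\widetilde{\delta}$ of $T$ which preserves $[T,T]$ and so acts on $C(H)$ via $\pi$.

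\medskip

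\textbf{Step 1: a commutator identity for $\partial_i$ and $\widetilde{\delta}$.} The first key step is to establish, for every $u \in T$ and every $i \in \{1,\dots,n\}$, the identity
\begin{equation*}
\partial_i \widetilde{\delta}(u) \;=\; \widetilde{\delta}\, \partial_i(u) \;+\; \sum_{k=1}^n \partial_i\!\bigl(\delta(x_k)\bigr)\, \partial_k(u).
\end{equation*}
To prove it I would start from the decomposition $u = \varepsilon(u) + \sum_k x_k\, \partial_k(u)$ coming from \eqref{eq:fundamental}, apply $\widetilde{\delta}$ (using the Leibniz rule) and then $\partial_i$ (using \eqref{eq:Fox}). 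The crucial observation is that $\widetilde{\delta}(x_k) = \delta(x_k)$ lies in $\freeLie \subset T_{\geq 1}$, so $\varepsilon(\widetilde{\delta}(x_k)) = 0$ and the cross-term $\varepsilon(\widetilde{\delta}(x_k))\,\partial_i(\partial_k(u))$ drops out; the remaining terms reassemble into the claimed formula.

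\medskip

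\textbf{Step 2: unpacking both sides of the cocycle identity.} Using the divergence formula \eqref{eq:divergence} and $[\widetilde{\delta},\widetilde{\eta}](x_i) = \widetilde{\delta}(\eta(x_i)) - \widetilde{\eta}(\delta(x_i))$, we have
\begin{equation*}
\Trace([\delta,\eta]) \;=\; \sum_i \pi\!\bigl(\partial_i \widetilde{\delta}(\eta(x_i))\bigr) \;-\; \sum_i \pi\!\bigl(\partial_i \widetilde{\eta}(\delta(x_i))\bigr).
\end{equation*}
Applying the identity of Step~1 to each term (with $u = \eta(x_i)$ and $u = \delta(x_i)$ respectively) gives
\begin{equation*}
\Trace([\delta,\eta]) \;=\; \delta \cdot \Trace(\eta) - \eta \cdot \Trace(\delta) \;+\; \sum_{i,k} \pi\!\bigl( \partial_i(\delta(x_k))\, \partial_k(\eta(x_i)) \bigr) \;-\; \sum_{i,k} \pi\!\bigl( \partial_i(\eta(x_k))\, \partial_k(\delta(x_i)) \bigr),
\end{equation*}
where I use that, since $\widetilde{\delta}$ preserves $[T,T]$, its induced action on $C(H)$ commutes with $\pi$, so $\sum_i \pi\bigl(\widetilde{\delta}\,\partial_i(\eta(x_i))\bigr) = \delta \cdot \Trace(\eta)$ by definition, and similarly for $\eta$.

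\medskip

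\textbf{Step 3: the two residual sums cancel modulo $[T,T]$.} It now suffices to observe that the two double sums at the end of Step~2 are equal in $C(H)$. In the second one I would rename $i \leftrightarrow k$ to get $\sum_{i,k} \pi(\partial_k(\eta(x_i))\, \partial_i(\delta(x_k)))$, and then use the defining relation $ab \equiv ba \pmod{[T,T]}$ of the cyclic quotient to rewrite each summand as $\pi(\partial_i(\delta(x_k))\, \partial_k(\eta(x_i)))$. This matches the first sum term by term, so the difference vanishes and \eqref{eq:1-cocycle} follows.

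\medskip

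The main (and really only) technical point is the commutator identity of Step~1; once one is careful about the vanishing of $\varepsilon$ on $\freeLie$, everything else is organizational: the divergence formula reduces the statement to an identity in $T$, and cyclic symmetry in $C(H)$ absorbs the leftover cross-terms.
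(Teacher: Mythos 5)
Your proof is correct, and it takes a genuinely different route from the one in the paper. The paper splits into cases according to degree: it first treats $\deg \eta = 0$ via a direct computation with the matrix of $\eta$, then handles $\deg\delta,\deg\eta\geq 1$ by writing $\delta(x_i)=\sum_j[x_j,u_{ji}]$ and $\eta(x_i)=\sum_j[x_j,v_{ji}]$ and expanding everything in terms of the coefficients $u_{ji},v_{ji}$ and their Fox derivatives; the resulting computation is long and relies on this particular bracket decomposition (available only in positive degree). Your approach instead isolates a single commutation formula
$$
\partial_i\,\widetilde{\delta}(u)=\widetilde{\delta}\,\partial_i(u)+\sum_k\partial_i\bigl(\delta(x_k)\bigr)\,\partial_k(u),
$$
valid for all $u\in T$, whose derivation uses only the fundamental relation \eqref{eq:fundamental}, the Fox rule \eqref{eq:Fox}, and the one structural fact $\varepsilon(\delta(x_k))=0$ (which holds because $\freeLie\subset T_{\geq 1}$ for \emph{any} derivation, not just a homogeneous one of positive degree). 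This makes the argument uniform: there is no case distinction by degree, and the residual cross-terms cancel by a simple reindexing together with cyclic symmetry in $C(H)$. I checked each step — the Leibniz/Fox bookkeeping in Step 1 (including the vanishing of $\varepsilon(\widetilde{\delta}(x_k))$ and the use of $\partial_i(x_k)=\delta_{ik}$), the identification $\sum_i\pi(\widetilde{\delta}\,\partial_i(\eta(x_i)))=\delta\cdot\Trace(\eta)$ in Step 2 via the fact that $\widetilde{\delta}$ preserves $[T,T]$, and the cancellation in Step 3 — and all are sound. What your version buys is conceptual clarity and generality: the one identity both explains why the cocycle condition holds (the commutator $[\partial_i,\widetilde{\delta}]$ is a "chain-rule" correction) and avoids the ad hoc expansion of $\freeLie_{d+1}$ used in the paper; what the paper's version buys is that it stays closer to the explicit Lie-bracket model of $\mathfrak{h}^+$ used elsewhere in the article.
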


It has been proved by Satoh \cite{satoh} that  $\Trace:  \Der(\freeLie,\freeLie) \to {C}(H)$ 
is surjective in any fixed degree $d$ if $\dim(H)$ is large enough with respect to $d$ 
(see Theorem \ref{thm:satoh} below). 
Let $\mathfrak{R} \subset {\freeLie}_{\geq 2}$ be a characteristic ideal and let $C^{\mathfrak{R}}(H)$ be the quotient of $T$ by the subspace $[T,T]+ T\mathfrak{R}T$, 
where  $T\mathfrak{R}T$ is the two-sided ideal of $T$ generated by~$\mathfrak{R}$. 
The composition of $\Trace$ with the canonical projection ${C}(H) \to C^{\mathfrak{R}}(H)$ is denoted by $\Trace^{\mathfrak{R}}$.
It follows from Lemma~\ref{lem:trace_cocycle} that $\Trace^{\mathfrak{R}}$ vanishes on the image of the Lie bracket of $\Der(\freeLie,\mathfrak{R})$,
so that it induces a degree-preserving $\GL(H)$-equivariant  linear map
$$
\Trace^{\mathfrak{R}}: H_1\big(\Der(\freeLie,\mathfrak{R})\big) \longrightarrow C^{\mathfrak{R}}(H).
$$

An important special case  is provided by $\mathfrak{R}:= \freeLie_{\geq 2}$.
Then we have $C^{\mathfrak{R}}(H)=S(H)$ and the resulting map $\Trace^{\mathfrak{R}}$ is denoted by 
\begin{equation} \label{eq:Morita_trace}
\ITrace: H_1\big(\Der( \freeLie,  \freeLie_{\geq 2})\big) \longrightarrow {S}(H).
\end{equation}
The map $\ITrace$,  which we describe in more detail below, was originally defined by Morita \cite{morita} in his study of mapping class groups.

\subsection{The symplectic case}

Assume now that the vector space $H$ is equipped with a symplectic form $\omega: H \times H \to \Q$.
We shall  identify $H$ to $H^*$ via the isomorphism $h\mapsto \omega(h,-)$.
In this case, the contents of the previous subsections can be specified as follows.

We  identify $\Der(\widehat{\freeLie}, \widehat{\freeLie})$
to $\Hom(H, \widehat\freeLie) = H^* \otimes \widehat\freeLie \simeq H \otimes \widehat\freeLie$ using the above isomorphism $H\simeq H^*$.
Furthermore, denoting by  $\omega \in \wedge^2 H = \freeLie_2$  the bivector dual to $\omega \in \wedge^2 H^*$,
we consider the subgroups
$$
\Aut_\omega(\widehat \freeLie) \subset \Aut(\widehat \freeLie), \quad \IAut_\omega(\widehat \freeLie) \subset \IAut(\widehat \freeLie)
$$
of automorphisms that fix $\omega$, and the Lie subalgebras
$$
\widehat{\mathfrak{h}}:=\Der_\omega (\widehat{\freeLie}, \widehat{\freeLie}) \subset \Der(\widehat{\freeLie}, \widehat{\freeLie}), \quad 
\widehat{\mathfrak{h}}^+:=\Der_\omega (\widehat{\freeLie}, \widehat{\freeLie}_{\geq 2}) \subset \Der(\widehat{\freeLie}, \widehat{\freeLie}_{\geq 2})
$$
of derivations that vanish on $\omega$.
Then the short exact sequence \eqref{eq:IAut(L)} specializes to
\begin{equation}   \label{eq:IAut(L)_symplectic}
\xymatrix{
1 \ar[r] & \IAut_\omega(\widehat{\freeLie}) \ar[r]  & \Aut_\omega(\widehat{\freeLie})  \ar[r]^-\sigma & \Sp(H) \ar[r] & 1,
}
\end{equation}
where $\Sp(H)$ denotes the group of symplectic automorphisms of $H$.
Furthermore, the  correspondence  between $\IAut(\widehat{\freeLie})$ and $\Der(\widehat{\freeLie}, \widehat{\freeLie}_{\ge 2})$ described in Section \ref{subsec:derivations}
makes $\IAut_\omega (\widehat{\freeLie})$ correspond to~$\widehat{\mathfrak{h}}^+$.

By restriction to the Lie subalgebra ${\mathfrak{h}}^+ \subset  \Der({\freeLie}, {\freeLie}_{\geq 2})$, 
Morita's trace map \eqref{eq:Morita_trace}  induces an $\Sp(H)$-equivariant  degree-preserving linear map
\begin{equation} \label{eq:authentic_Morita}
\ITrace : {H_1}\big({\mathfrak{h}}^+\big) \longrightarrow {S}(H).
\end{equation}
Here the action of $\Sp(H)$  on the Lie algebra ${\mathfrak{h}}^+$  is the restriction of 
 the $\GL(H)$-action on $\Der(\freeLie,\freeLie_{\geq 2})$  described in Section \ref{subsec:derivations}. 
According to  \cite[Theorem~6.1]{morita}, we have  the following when $\dim(H) > 2$: 
\begin{eqnarray}
\label{eq:surjectivity} && \hbox{$\ITrace_{2k+1}:  {H_1}\big({\mathfrak{h}}^+\big)_{2k+1} \longrightarrow {S}^{2k+1}(H)$  is surjective for any $k\geq 0$;} \\
\label{eq:nullity} && \hbox{$\ITrace_{2k}:  {H_1}\big({\mathfrak{h}}^+\big)_{2k} \longrightarrow {S}^{2k}(H)$  is trivial for any $k\geq 1$.}
\end{eqnarray}
(Note that $\ITrace_k=0$ for all $k \ge 1$ when $\dim(H)=2$,
 see  \cite[Proposition~8.2]{mss4}, for instance.)
It~has also been shown by Nakamura in an unpublished work 
that, for any  $k\geq 0$, the $\Sp(H)$-module $\mathfrak{h}_{2k+1}$ has only one copy 
$S^{2k+1}(H)$ in its irreducible decomposition \cite{es}. Therefore the projection 
$H_1 (\mathfrak{h}^+)_{2k+1} \to S^{2k+1}(H)$ provided by $\ITrace$ in degree $2k+1$ is unique up to multiplication by a non-zero scalar. 

Passing the map \eqref{eq:authentic_Morita} to the degree-completions, we 
obtain an $\Sp(H)$-equivariant filtration-preserving linear map
$$
\ITrace : \widehat{H_1}\big(\widehat {\mathfrak{h}}^+\big) \longrightarrow \widehat{S}(H).
$$
Here  $\widehat{H_1}\big(\widehat{\mathfrak{h}}^+\big)$ denotes the degree-completion of ${H_1}\big({\mathfrak{h}}^+\big)$
or, equivalently, it is  the quotient of 
$\widehat{\mathfrak{h}}^+$  by  the closure of 
the image $\big[\widehat{\mathfrak{h}}^+,\widehat{\mathfrak{h}}^+\big]$ of the Lie bracket.

\subsection{Proof of Lemma \ref{lem:trace_cocycle}}

The lemma follows from \cite[Propositions 3.19 \& 3.20]{at} if one restricts the map $\Trace$  to a certain subalgebra of $\Der(\freeLie,  \freeLie)$,
namely the Lie algebra of ``tangential derivations'' in the terminology of \cite{at}. The proof  of the lemma in the general case uses the same kind of arguments and proceeds as follows.
In the sequel,  we will omit the sums over repeated indices, and we denote by a dot $\cdot$ the action of $\Der(\freeLie,  \freeLie)$ on $\freeLie$, $T$ or $C(H)$.

In order to prove \eqref{eq:1-cocycle} for any $\delta, \eta \in  \Der( \freeLie,  \freeLie)$,
we can assume that  $\delta$ and $\eta$ are homogeneous of degree $d$ and $e$ respectively.
We consider firstly the case where $e=0$. Let $(n_{ij})_{i,j}$ be the matrix of $\eta\in \Hom(H,H)$ in the basis $(x_i)_i$: i.e. $\eta(x_i) = n_{ij} x_j$ for all $i$. Then
\begin{eqnarray*}
\Trace([\delta,\eta]) &=& \partial_i \left([\delta,\eta](x_i)\right) \\
&=&   \partial_i    \left(\delta(\eta(x_i))\right) -   \partial_i  \left(\eta\cdot \delta(x_i)\right)  \\
&=&    n_{ij} \operatorname{ev} \left(x_i^*\otimes  \delta(x_j)\right) -  \operatorname{ev} \left(x_i^*\otimes  \eta\cdot \delta(x_i)\right) \\
&=&  \operatorname{ev} \Big( \big( n_{ij} x_i^*\big) \otimes  \delta(x_j)\Big) -   \operatorname{ev} \left(x_i^*\otimes  \eta\cdot \delta(x_i)\right) \\
&=&    \operatorname{ev} \big( \eta^*(x_j^*) \otimes  \delta(x_j)\big) -  \operatorname{ev} \left(x_i^*\otimes  \eta\cdot \delta(x_i)\right).
\end{eqnarray*}
Here $\eta^* \in \Hom(H^*,H^*)$ is the dual of the endomorphism $\eta$. 
Hence 
$$
\Trace([\delta,\eta]) = - \eta\cdot  \operatorname{ev}(x_i^* \otimes \delta(x_i)) = - \eta \cdot \Trace(\delta) = \delta \cdot \Trace(\eta) -  \eta \cdot \Trace(\delta), 
$$
where the last identity follows from the fact that the action of $\Der(\freeLie,\freeLie)$ on the degree $0$ part of $C(H)$ is trivial.  

The case $d=0$ follows from the case $e=0$ by skew-symmetry of the Lie bracket in $\Der(\freeLie, \freeLie)$. 
Hence we  now assume that $d\geq 1$ and $e\geq 1$. For every  $i\in \{1,\dots,n\}$, we write
$$
\delta(x_i) = \sum_{j=1}^n [x_j, u_{ji}], \quad \eta(x_i) = \sum_{j=1}^n  [x_j,v_{ji}] 
$$ 
where $u_{ji} \in \freeLie_d$ and $v_{ji} \in \freeLie_e$. Hence we obtain
\begin{eqnarray*}
\Trace(\eta) \ = \   \partial_i\big( \eta(x_i)\big) \ = \   \partial_i\left( [x_j, v_{ji}]\right) 
 &=&    \partial_i (x_j v_{ji}) -    \partial_i (v_{ji} x_j)\\
& \by{eq:Fox} &     v_{ii} -   \partial_i (v_{ji}) x_j
\end{eqnarray*}
and we deduce that
\begin{equation} \label{eq:delta_eta}
\delta \cdot \Trace(\eta) =    \delta\cdot  v_{ii} -   \left( \delta\cdot \partial_i (v_{ji})\right)\,  x_j -    \partial_i (v_{ji})\,  \delta(x_j).
\end{equation}
Besides, we have
\begin{equation} \label{eq:[delta,eta]}
\Trace([\delta,\eta])  =  \partial_i([\delta,\eta](x_i)) =   X_{\delta,\eta} - X_{\eta,\delta}
\end{equation}
where $X_{\delta,\eta} :=   \partial_i \left(\delta \cdot \eta(x_i)\right)$ and $X_{\eta,\delta}$ is similarly defined. We compute the former:
\begin{eqnarray*}
X_{\delta,\eta} &=&   \partial_i    \left(\delta \cdot [x_j,v_{ji}]\right) \\
 &=&  \partial_i    \left(\delta \cdot (x_j  v_{ji})\right) -  \partial_i    \left(\delta \cdot (v_{ji} x_{j})\right) \\
&=&  \partial_i    \left(\delta(x_j)\,  v_{ji}\right) +   \partial_i    \left(x_j\,  (\delta \cdot  v_{ji})\right) -  \partial_i    \left((\delta \cdot v_{ji})\,  x_{j}\right) -  \partial_i    \left(v_{ji}\, \delta(x_{j})\right) \\
&\by{eq:Fox}&  \partial_i    \left(\delta(x_j)\right)  v_{ji} +  \delta \cdot  v_{ii} -  \partial_i    \left(\delta \cdot v_{ji}\right)\,  x_{j} -  \partial_i    \left(v_{ji}\right)\, \delta(x_{j}).
\end{eqnarray*}
The third summand in the last equation is 
\begin{eqnarray*}
\partial_i    \left(\delta \cdot v_{ji}\right)\,  x_{j}
 &\by{eq:fundamental}&  \partial_i    \left(\delta \cdot \big(x_k \partial_k(v_{ji}) \big) \right)\,  x_{j} \\
 &=&  \partial_i    \big(\delta (x_k)\, \partial_k(v_{ji}) \big)\,  x_{j}  +  \partial_i    \big(x_k\, (\delta\cdot \partial_k(v_{ji})) \big)\,  x_{j} \\
&=&  \partial_i  \big(  \delta (x_k)\big)\, \partial_k(v_{ji}) \,  x_{j}  +   (\delta\cdot \partial_i(v_{ji}))\,  x_{j} .
\end{eqnarray*}
Hence, using \eqref{eq:delta_eta}, we deduce that
\begin{eqnarray*}
&& X_{\delta,\eta} -  \delta \cdot \Trace(\eta)  \\
&=&   \partial_i    \left(\delta(x_j)\right)  v_{ji}   -  \partial_i \big(\delta (x_k)\big)\, \partial_k(v_{ji}) \,  x_{j} \\
&=&   \partial_i    \left([x_r,u_{rj}]\right)  v_{ji}   -  \partial_i([x_r,u_{rk}])\, \partial_k(v_{ji}) \,  x_{j} \\
&=&  \partial_i    \left(x_ru_{rj}\right)  v_{ji}   -   \partial_i    \left(u_{rj}x_r\right)  v_{ji}   \\
&&  -  \partial_i(x_r u_{rk})\, \partial_k(v_{ji}) \,  x_{j}    + \partial_i(u_{rk} x_r)\, \partial_k(v_{ji}) \,  x_{j}  \\
&=&   u_{ij} v_{ji}   -  \partial_i    \left(u_{rj}\right) x_r  v_{ji}   -   u_{ik} \partial_k(v_{ji}) \,  x_{j}    +   \partial_i(u_{rk}) x_r \partial_k(v_{ji}) \,  x_{j} .
\end{eqnarray*}
We conclude that
\begin{eqnarray*}
&&\Trace([\delta,\eta]) -  \delta \cdot \Trace(\eta)  +  \eta \cdot \Trace(\delta)  \\
&\by{eq:[delta,eta]}& \left(X_{\delta,\eta} -  \delta \cdot \Trace(\eta) \right) - \left(X_{\eta,\delta} -  \eta \cdot \Trace(\delta)  \right) \\
&=&   u_{ij} v_{ji}   -  \partial_i    \left(u_{rj}\right) x_r  v_{ji}     -   u_{ik} \partial_k(v_{ji}) \,  x_{j}    +  \partial_i(u_{rk}) x_r \partial_k(v_{ji}) \,  x_{j} \\
&& - v_{ij} u_{ji}   +  \partial_i    \left(v_{rj}\right) x_r  u_{ji}     +   v_{ik} \partial_k(u_{ji}) \,  x_{j}    -  \partial_i(v_{rk}) x_r \partial_k(u_{ji}) \,  x_{j}  \\
&=&   u_{ij} v_{ji}   -  \partial_i    \left(u_{rj}\right) x_r  v_{ji}     -   u_{ik} \partial_k(v_{ji}) \,  x_{j}    +  \partial_i(u_{rk}) x_r \partial_k(v_{ji}) \,  x_{j} \\
&& - u_{ji}  v_{ij}   + u_{ji}  \partial_i    \left(v_{rj}\right) x_r       +    \partial_k(u_{ji}) \,  x_{j}  v_{ik}   -   \partial_k(u_{ji}) \,  x_{j} \partial_i(v_{rk}) x_r \\
&=& 0
\end{eqnarray*}
where, in the penultimate identity, we have used the fact that $\Trace$ takes values in the quotient $C(H)$  of $T(H)$.

\section{The abelianization map and the Magnus representation}\label{sec:abel}

Let $H$ be a vector space of finite dimension $n$ and let $\widehat\freeLie:= \widehat\freeLie(H)$.
We define a kind of ``abelianization'' map on $\Aut(\widehat\freeLie)$ and we relate this to 
the ``Magnus representation'' of $\Aut(\widehat\freeLie)$.

\subsection{The abelianization map} \label{subsec:abelianization}

Recall that  $\sigma : \Aut(\widehat\freeLie) \to \ \Aut(\widehat\freeLie/\widehat\freeLie_{\geq 2}) \simeq \GL(H)$ 
is the canonical homomorphism, and that the canonical action of  $\GL(H)$  on $\widehat{\freeLie}$ defines an embedding $\GL(H) \hookrightarrow \Aut(\widehat\freeLie)$.
Consider the map 
$$
\Abel: \Aut(\widehat\freeLie)  \longrightarrow \widehat{H_1}\big(\Der(\widehat\freeLie,\widehat \freeLie_{\geq 2})\big), \ \psi \longmapsto \log\left(\psi\, \sigma(\psi)^{-1}\right)
$$
where  $\widehat{H_1}\big(\Der(\widehat\freeLie,\widehat\freeLie_{\geq 2})\big)$
is  the quotient of the complete Lie algebra $\Der(\widehat\freeLie,\widehat\freeLie_{\geq 2})$ 
by the closure of the image of its Lie bracket; 
the restriction of the  map $\Abel$ to $\IAut(\widehat\freeLie)$ is denoted by $\IAbel$. 
The group $\Aut(\widehat\freeLie)$ acts on the space $\widehat{H_1}\big(\Der(\widehat\freeLie,\widehat\freeLie_{\geq 2})\big)$ via $\sigma$,
and it also acts on $\IAut(\widehat\freeLie)$ by conjugacy. 

\begin{lemma} \label{lem:abelianization}
$(1)$ The map $\Abel$ is a group $1$-cocycle: for all $\psi, \varphi \in  \Aut(\widehat\freeLie)$, we have 
$$
\Abel(\psi \varphi) = \Abel(\psi) + \sigma(\psi)\cdot \Abel(\varphi).
$$
$(2)$ The map $\IAbel$  is an  $\Aut(\widehat\freeLie)$-equivariant group homomorphism.
\end{lemma}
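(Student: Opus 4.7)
The plan is to reduce everything to the log--exp correspondence $\IAut(\widehat\freeLie) \simeq \Der(\widehat\freeLie,\widehat\freeLie_{\geq 2})$ recalled in Section~\ref{subsec:derivations}, together with the fact that, by Baker--Campbell--Hausdorff, iterated Lie brackets vanish after projecting to the abelian quotient $\widehat{H_1}\bigl(\Der(\widehat\freeLie,\widehat\freeLie_{\geq 2})\bigr)$. All the series in sight converge by completeness, since they involve derivations valued in $\widehat\freeLie_{\geq 2}$, hence are ``nilpotent modulo each filtration step''.

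For statement (1), given $\psi, \varphi \in \Aut(\widehat\freeLie)$, I would first note that $\psi \sigma(\psi)^{-1}$ and $\varphi\sigma(\varphi)^{-1}$ both lie in $\IAut(\widehat\freeLie)$ (their $\sigma$-image is $\Id_H$), and factor
\[
\psi\varphi\, \sigma(\psi\varphi)^{-1} \;=\; \bigl(\psi \sigma(\psi)^{-1}\bigr) \cdot \bigl(\sigma(\psi)\, \varphi \sigma(\varphi)^{-1}\, \sigma(\psi)^{-1}\bigr),
\]
both factors of which lie in $\IAut(\widehat\freeLie)$. Applying the BCH formula in the complete filtered Lie algebra $\Der(\widehat\freeLie,\widehat\freeLie_{\geq 2})$, the logarithm of this product equals the sum of the two individual logarithms up to iterated Lie brackets; these brackets vanish in $\widehat{H_1}$. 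Since the logarithm is equivariant under conjugation by any algebra automorphism (as $(gxg^{-1})^n = g x^n g^{-1}$ term-by-term in the defining series), the second logarithm equals $\sigma(\psi)\cdot \log\bigl(\varphi\sigma(\varphi)^{-1}\bigr)\cdot \sigma(\psi)^{-1}$, which by definition of the $\GL(H)$-action on derivations is $\sigma(\psi)\cdot \Abel(\varphi)$. This yields the cocycle identity.

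For statement (2), the homomorphism property is immediate by specializing (1) to $\psi,\varphi \in \IAut(\widehat\freeLie)$, where $\sigma(\psi) = \Id_H$. For the $\Aut(\widehat\freeLie)$-equivariance, I take $\chi \in \Aut(\widehat\freeLie)$ and $\psi \in \IAut(\widehat\freeLie)$ and factor $\chi = \sigma(\chi) \cdot \chi''$ with $\chi'' := \sigma(\chi)^{-1}\chi \in \IAut(\widehat\freeLie)$. Then, using again that $\log$ commutes with conjugation,
\[
\IAbel(\chi\psi\chi^{-1}) \;=\; \log(\chi\psi\chi^{-1}) \;=\; \sigma(\chi) \cdot \bigl(\chi''\, \log(\psi)\, (\chi'')^{-1}\bigr) \cdot \sigma(\chi)^{-1}.
\]
Writing $\delta'' := \log\chi'' \in \Der(\widehat\freeLie,\widehat\freeLie_{\geq 2})$, the inner conjugation expands as the convergent series $e^{\operatorname{ad}\delta''}(\log\psi) = \log\psi + [\delta'', \log\psi] + \tfrac{1}{2}[\delta'',[\delta'',\log\psi]] + \cdots$, whose bracket tail is killed in $\widehat{H_1}$. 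What remains is precisely $\sigma(\chi)\cdot \IAbel(\psi)$ for the action described in Section~\ref{subsec:abelianization}.

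The main obstacle, such as it is, is purely bookkeeping: one must carefully distinguish the \emph{group}-theoretic conjugation acting on $\IAut(\widehat\freeLie)$ from the \emph{Lie}-theoretic adjoint action on $\Der(\widehat\freeLie,\widehat\freeLie_{\geq 2})$, and track how the splitting $\chi = \sigma(\chi)\cdot \chi''$ transfers a conjugation by $\chi$ into a conjugation by $\sigma(\chi)\in \GL(H)$ modulo an ``interior'' contribution from $\chi'' \in \IAut(\widehat\freeLie)$ which dies in the abelianization. Once this is done consistently, both assertions follow mechanically from BCH.
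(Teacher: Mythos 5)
Your proof is correct, and part~(1) is essentially identical to the paper's: the same factorization $\psi\varphi\,\sigma(\psi\varphi)^{-1}=\bigl(\psi\sigma(\psi)^{-1}\bigr)\bigl(\sigma(\psi)\varphi\sigma(\varphi)^{-1}\sigma(\psi)^{-1}\bigr)$, followed by BCH and the fact that brackets die in $\widehat{H_1}$, with the naturality of $\log$ under conjugation handling the $\sigma(\psi)$-action. For the equivariance in part~(2), your route is mildly different from the paper's: you re-split $\chi=\sigma(\chi)\chi''$ and expand the inner conjugation $\chi''\log(\psi)(\chi'')^{-1}=e^{\operatorname{ad}\delta''}(\log\psi)$, killing the $\operatorname{ad}$-tail in $\widehat{H_1}$; the paper instead deduces equivariance as a purely formal consequence of the cocycle identity already established in~(1), applying it to $\Abel(\varphi\psi\varphi^{-1})$ and cancelling using $\Abel(\varphi^{-1})=-\sigma(\varphi)^{-1}\cdot\Abel(\varphi)$. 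Your argument requires one more invocation of BCH-type reasoning but is equally rigorous; the paper's is slightly leaner since everything has already been packed into the $1$-cocycle identity. Either way the bookkeeping you flag—keeping group conjugation in $\IAut(\widehat\freeLie)$ separate from the $\GL(H)$-action on derivations—is exactly where care is needed, and you handle it correctly.
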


\begin{proof}
(1) Let $\psi, \varphi\in \Aut(\widehat\freeLie)$. We have
\begin{align*}
\log \left( \psi\varphi\, \sigma(\psi \varphi)^{-1}\right) 
&= \log \left(\psi \varphi\, \sigma(\varphi)^{-1} \sigma(\psi)^{-1}\right)\\
&= \log \left(\psi  \sigma(\psi)^{-1} \sigma(\psi)  \varphi \sigma(\varphi)^{-1} \sigma(\psi)^{-1}\right)\\ 
&\equiv \log\left(\psi\sigma(\psi)^{-1}  \right) +  \log\left( \sigma(\psi) \varphi  \sigma(\varphi)^{-1} \sigma(\psi)^{-1} \right) \\
&\equiv \Abel(\psi) + \sigma(\psi) \circ  \log\left(  \varphi  \sigma(\varphi)^{-1} \right) \circ \sigma(\psi)^{-1} \\
&\equiv  \Abel(\psi)  + \sigma(\psi) \cdot \Abel(\varphi)
\end{align*}
where  $\equiv$ denotes a congruence modulo the closed subspace of 
$\Der(\widehat\freeLie,\widehat\freeLie_{\geq 2})$ spanned by Lie brackets, 
 and the first occurence of $\equiv$ follows from the BCH formula. 

(2) The fact that $\IAbel$ is a homomorphism immediately follows from (1). 
The $\Aut(\widehat\freeLie)$-equivariancy of $\IAbel$ is also a consequence of (1) as follows. 
For $\psi \in \IAut(\widehat\freeLie)$ and $\varphi \in \Aut(\widehat\freeLie)$, 
we have 
\begin{align*}
\IAbel (\varphi\psi\varphi^{-1})
&=\Abel (\varphi)+\sigma (\varphi) \cdot \Abel (\psi) + \sigma (\varphi\psi) \cdot \Abel (\varphi^{-1})\\
&=\Abel (\varphi)+\sigma (\varphi) \cdot \Abel (\psi) + \sigma(\varphi) \cdot  \Abel (\varphi^{-1})\\
&=\Abel (\varphi)+\sigma (\varphi) \cdot \Abel (\psi) - \sigma(\varphi) \sigma(\varphi)^{-1} \cdot \Abel (\varphi) 
\ = \ \sigma (\varphi) \cdot \Abel (\psi). 
\end{align*}

\up
\end{proof}

The definition of the map $\Abel$ can be refined as follows.
Let  $\mathfrak{R}\subset \freeLie_{\geq 2}$ be a characteristic ideal of $\freeLie$, and 
recall that $\Aut^{{\mathfrak{R}}}(\widehat{\freeLie})$ is  the subgroup of $\Aut(\widehat{\freeLie})$
acting trivially at the level of $\widehat{\freeLie}/\widehat{\mathfrak{R}}$.
Then, by Lemma~\ref{lem:abelianization}, the map 
$$
\Abel^{\mathfrak{R}}: \Aut^{\mathfrak{R}}(\widehat\freeLie)  \longrightarrow \widehat{H_1}\big(\Der(\widehat\freeLie,\widehat{\mathfrak{R}})\big), \ \psi \longmapsto \log\left(\psi\right)
$$
is a group homomorphism. Clearly, $\Abel^{\mathfrak{R}}=\IAbel$ for $\mathfrak{R}:=\freeLie_{\geq 2}$.

\subsection{The Magnus representation}  \label{subsec:Magnus}

Let  $\mathfrak{R}\subset \freeLie_{\geq 2}$ be a characteristic ideal of $\freeLie$. 
Set $\widehat{T} :=  \widehat{T}(H)$ and $\widehat{T}^{{\mathfrak{R}}}:= \widehat{T}/\widehat{T}{ \mathfrak{R}} \widehat{T}$, 
where  $\widehat{T} \mathfrak{R} \widehat{T}$ denotes the closed two-sided ideal of $\widehat{T}$ generated by $\mathfrak{R}$.

\begin{lemma} \label{lem:Magnus}
Any $\psi \in \Aut^{{\mathfrak{R}}}(\widehat{\freeLie})$ induces an automorphism of the right $\widehat{T}^{{\mathfrak{R}}}$-module
$\widehat{T}_{\geq 1} {\otimes}_{\widehat{T}} \widehat{T}^{{\mathfrak{R}}}$, where $\widehat{T}_{\geq 1}$ is regarded as a right $\widehat T$-module.
\end{lemma}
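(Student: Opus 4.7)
The plan is to show that $\psi$ extends to an automorphism $\widehat{\psi}$ of the completed tensor algebra $\widehat{T}$ that acts trivially modulo the ideal $\widehat{T}\mathfrak{R}\widehat{T}$, and then to use this to descend the action of $\widehat{\psi}$ on $\widehat{T}_{\geq 1}$ to an automorphism of the tensor product $\widehat{T}_{\geq 1} \otimes_{\widehat{T}} \widehat{T}^{\mathfrak{R}}$.

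First, since $\widehat{T}$ can be identified with the degree-completion of the universal enveloping algebra of $\widehat{\freeLie}$, every filtration-preserving Lie algebra automorphism $\psi$ of $\widehat{\freeLie}$ extends uniquely to a filtration-preserving algebra automorphism $\widehat{\psi}$ of $\widehat{T}$, which preserves the augmentation ideal $\widehat{T}_{\geq 1}$. The crucial step is to verify that, since $\psi$ acts as the identity on $\widehat{\freeLie}/\widehat{\mathfrak{R}}$, the induced automorphism $\widehat{\psi}$ acts as the identity on $\widehat{T}/\widehat{T}\mathfrak{R}\widehat{T} = \widehat{T}^{\mathfrak{R}}$. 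To see this, write $\psi(x) = x + r_x$ with $r_x \in \widehat{\mathfrak{R}}$ for $x \in H$; then a direct expansion of $\widehat{\psi}(x_{i_1}\cdots x_{i_k})$ and the observation that any term containing some $r_{x_{i_j}}$ lies in $\widehat{T}\mathfrak{R}\widehat{T}$ yield $\widehat{\psi}(t) \equiv t \pmod{\widehat{T}\mathfrak{R}\widehat{T}}$ for any $t \in \widehat{T}$, by continuity.

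Next, I define $\Psi : \widehat{T}_{\geq 1} \otimes_{\widehat{T}} \widehat{T}^{\mathfrak{R}} \to \widehat{T}_{\geq 1} \otimes_{\widehat{T}} \widehat{T}^{\mathfrak{R}}$ on elementary tensors by $\Psi(a \otimes b) := \widehat{\psi}(a) \otimes b$. To see this is well-defined, one checks compatibility with the relation $ar \otimes b = a \otimes \bar r\, b$ (for $r \in \widehat{T}$ with image $\bar r \in \widehat T^{\mathfrak R}$): this amounts to
\[
\widehat{\psi}(a)\,\widehat{\psi}(r) \otimes b \;=\; \widehat{\psi}(a) \otimes \overline{\widehat{\psi}(r)}\, b \;=\; \widehat{\psi}(a) \otimes \bar r\, b,
\]
where the last equality uses the key congruence $\widehat{\psi}(r) \equiv r \pmod{\widehat{T}\mathfrak{R}\widehat{T}}$ just established. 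Right $\widehat{T}^{\mathfrak{R}}$-linearity is obvious since $\Psi$ acts only on the left factor. Since $\psi^{-1} \in \Aut^{\mathfrak{R}}(\widehat{\freeLie})$ as well, the same construction for $\psi^{-1}$ produces a two-sided inverse, so $\Psi$ is an automorphism.

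The only nontrivial point in this plan is the congruence $\widehat{\psi} \equiv \Id \pmod{\widehat{T}\mathfrak{R}\widehat{T}}$; everything else is a formal verification of well-definedness and $\widehat{T}^{\mathfrak{R}}$-linearity. The reason this is not an obstacle is that $\widehat{T}\mathfrak{R}\widehat{T}$ is a \emph{two-sided} ideal containing $\widehat{\mathfrak{R}}$, which allows the multiplicative expansion of $\widehat{\psi}$ on monomials to absorb all the correction terms on either side.
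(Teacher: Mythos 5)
Your proof is correct and follows essentially the same approach as the paper: extend $\psi$ to an algebra automorphism $\widehat{\psi}$ of $\widehat{T}$, and show that $\widehat{\psi}\otimes\Id$ descends to the tensor product over $\widehat{T}$ because $\widehat{\psi}(y)-y$ maps to zero in $\widehat{T}^{\mathfrak{R}}$. The one genuine contribution you add is that you explicitly isolate and prove the key congruence $\widehat{\psi}\equiv\Id\pmod{\widehat{T}\mathfrak{R}\widehat{T}}$ via the expansion of $\widehat{\psi}$ on monomials in $H$, whereas the paper uses this fact implicitly in a single-line computation without remarking that it needs justification; this is a worthwhile clarification but not a different route.
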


\begin{proof}
Let $\widetilde{\psi}$ be the extension of $\psi$ to a filtration-preserving algebra automorphism of $\widehat{T}$.
For any $x\in \widehat{T}_{\geq 1}$, $y\in \widehat{T}$ and $u\in \widehat{T}^{\mathfrak{R}}$, we have 
\begin{eqnarray*}
\widetilde{\psi}(xy) \otimes u &= &\widetilde{\psi}(x) \widetilde{\psi}(y) \otimes u \\
& =&   \widetilde{\psi}(x) y \otimes u + \widetilde{\psi}(x)( \widetilde{\psi}(y)-y) \otimes u \ = \   \widetilde{\psi}(x)  \otimes yu  \quad  \in \widehat{T}_{\geq 1} {\otimes}_{\widehat{T}} \widehat{T}^{\mathfrak{R}}. 
\end{eqnarray*}
This computation shows that the endomorphism $\widetilde{\psi} \otimes  \Id_{\widehat{T}^{\mathfrak{R}}} $  of $\widehat{T}_{\geq 1} {\otimes} \widehat{T}^{{\mathfrak{R}}}$ 
induces an endomorphism of $\widehat{T}_{\geq 1} {\otimes}_{\widehat T} \widehat{T}^{{\mathfrak{R}}}$. 
Our claim immediately follows from this.
\end{proof}

The \emph{Magnus representation} of $ \Aut^{{\mathfrak{R}}}(\widehat{\freeLie})$ is the group homomorphism
$$
\Magnus^{{\mathfrak{R}}}:  \Aut^{{\mathfrak{R}}}(\widehat{\freeLie}) \longrightarrow \Aut_{ \widehat{T}^{{\mathfrak{R}}}}\big(\widehat{T}_{\geq 1} {\otimes}_{\widehat T} \widehat{T}^{{\mathfrak{R}}}\big)
$$
resulting from  Lemma \ref{lem:Magnus}. 
For instance, if  $\mathfrak{R}:=\freeLie_{\geq 2}$, 
then the Magnus representation is a group homomorphism  
$$
\IMagnus :  \IAut(\widehat{\freeLie}) \longrightarrow  \Aut_{ \widehat{S}}\big(\widehat{T}_{\geq 1} {\otimes}_{\widehat T}  \widehat{S} \big)
$$
where $ \widehat{S} :=  \widehat{S}(H)$ denotes the degree-completion of the symmetric algebra $S(H)$
generated by the vector space $H$ in degree $1$.

The  homomorphism $\Magnus^{{\mathfrak{R}}}$ has the following concrete description. 
We fix a basis $x:=(x_1,\dots,x_n)$ of $\widehat{T}_{\geq 1}$ as a right $\widehat{T}$-module.
(For instance,  we can pick a basis $x$ of the vector space  $H$, 
and use the inclusion $H \hookrightarrow \widehat{T}_{\geq 1}$.)
Then the right $\widehat{T}^{\mathfrak{R}}$-module  $\widehat{T}_{\geq 1} {\otimes}_{\widehat T} \widehat{T}^{{\mathfrak{R}}}$ is freely generated by $(x_1 \otimes 1,\dots, x_n \otimes 1)$.
By considering matrix presentations relatively to this basis,  we obtain  a group homomorphism 
$$
\Magnus^{{\mathfrak{R}}}_x:  \Aut^{{\mathfrak{R}}}(\widehat{\freeLie}) \longrightarrow \GL\big(n;  \widehat{T}^{\mathfrak{R}}\big)
$$
which depends on the choice of the basis $x$.
Using the Fox derivations $\partial_1,\dots, \partial_n: \widehat{T} \to \widehat{T}$ defined by the identity \eqref{eq:fundamental}, 
we get the ``Jacobian matrix'' formula 
\begin{equation} \label{eq:Jac}
\forall \psi \in  \Aut^{{\mathfrak{R}}}(\widehat{\freeLie}), \quad 
\Magnus^{{\mathfrak{R}}}_x(\psi) = \Big(p^\mathfrak{R}\partial_i\big( \psi(x_j) \big) \Big)_{i,j}
\end{equation}
where $p^{\mathfrak{R}}: \widehat{T} \to  \widehat{T}^{\mathfrak{R}}$ denotes the canonical projection.

\subsection{The  traces correspondence}

Let  $\mathfrak{R}\subset \freeLie_{\geq 2}$ be a characteristic ideal of $\freeLie$. 
By applying Appendix \ref{sec:cyclic_things} to the augmented 
algebra $R:= \widehat{T}^{{\mathfrak{R}}}$ 
and to the free right $R$-module $M:=  \widehat{T}_{\geq 1} {\otimes}_{\widehat T} \widehat{T}^{{\mathfrak{R}}}$, we get a  notion of ``log-determinant''
$$
\ldet  :  \IAut_{ \widehat{T}^{{\mathfrak{R}}}}\big(\widehat{T}_{\geq 1} {\otimes}_{\widehat T} \widehat{T}^{{\mathfrak{R}}}\big)
\longrightarrow \widehat{C^{\mathfrak{R}}}(H).
$$
(Here we are tacitly identifying the degree-completion 
$ \widehat{C^{\mathfrak{R}}}(H)$ of the graded vector space $C^{\mathfrak{R}}(H)= T/([T,T]+ T \mathfrak{R}T)$ 
with $\widehat{T}^{{\mathfrak{R}}}/[\widehat{T}^{{\mathfrak{R}}},\widehat{T}^{{\mathfrak{R}}}]$.)

\begin{lemma} \label{lem:traces}
The following  diagram is commutative:
$$
\xymatrix{
\Aut^{\mathfrak{R}}(\widehat \freeLie) \ar[rr]^-{\Abel^{\mathfrak{R}}} \ar[d]_-{\Magnus^{\mathfrak{R}}}&& 
\widehat{H_1}\big(\Der(\widehat\freeLie,\widehat{\mathfrak{R}})\big) \ar[d]^-{\Trace^{\mathfrak{R}}}\\
 \IAut_{ \widehat{T}^{{\mathfrak{R}}}}\big(\widehat{T}_{\geq 1} {\otimes}_{\widehat T} \widehat{T}^{{\mathfrak{R}}}\big) \ar[rr]_-{\ldet } &  & \widehat{C^{\mathfrak{R}}}(H)
}
$$
\end{lemma}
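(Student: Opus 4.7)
Since $\log$ and $\exp$ give inverse bijections between $\Der(\widehat{\freeLie},\widehat{\mathfrak{R}})$ and $\Aut^{\mathfrak{R}}(\widehat{\freeLie})$ (Section~\ref{subsec:derivations}), every $\psi \in \Aut^{\mathfrak{R}}(\widehat{\freeLie})$ has the form $\psi = \exp(\delta)$ for a unique $\delta \in \Der(\widehat{\freeLie},\widehat{\mathfrak{R}})$, and by definition $\Abel^{\mathfrak{R}}(\psi) = \delta$. The upper composite evaluated at $\psi$ is therefore $\Trace^{\mathfrak{R}}(\delta)$, which by the divergence formula~\eqref{eq:divergence} (composed with $p^{\mathfrak{R}}$) equals
$$
\Trace^{\mathfrak{R}}(\delta) \;=\; \sum_{i=1}^n p^{\mathfrak{R}}\partial_i(\delta(x_i)) \quad \text{in } \widehat{C^{\mathfrak{R}}}(H),
$$
where $x = (x_1,\dots,x_n)$ is a chosen basis of $H$.

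For the lower composite, I would fix the same basis $x$ and work with the matrix form $\Magnus^{\mathfrak{R}}_x$ of \eqref{eq:Jac}. Since $\Magnus^{\mathfrak{R}}$ is a group homomorphism and $\exp(s\delta)\exp(t\delta)=\exp((s+t)\delta)$, the curve $t \mapsto \Magnus^{\mathfrak{R}}_x(\exp(t\delta))$ is a one-parameter subgroup with pairwise-commuting values inside the pro-unipotent target group $\IAut_{\widehat{T}^{\mathfrak{R}}}\big(\widehat{T}_{\geq 1}\otimes_{\widehat{T}}\widehat{T}^{\mathfrak{R}}\big)$. Using the $\log$/$\exp$ correspondence available there (Appendix~\ref{sec:cyclic_things}), such a one-parameter subgroup must be of the form $\exp(t\,\mathcal{M}(\delta))$ where $\mathcal{M}(\delta)$ is its coefficient of~$t$. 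Expanding $\widetilde{\exp(t\delta)}(x_j) = x_j + t\,\delta(x_j) + O(t^2)$ and applying $p^{\mathfrak{R}}\partial_i(-)$ to~\eqref{eq:Jac} yields the closed form
$$
\mathcal{M}(\delta)_{ij} \;=\; p^{\mathfrak{R}}\partial_i(\delta(x_j)).
$$

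The last ingredient is the key property of the noncommutative log-determinant to be established in Appendix~\ref{sec:cyclic_things}, namely $\ldet(\exp(M)) = \operatorname{tr}(M)$ in $\widehat{C^{\mathfrak{R}}}(H)$, where $\operatorname{tr}$ denotes the matrix trace followed by the canonical projection $\widehat{T}^{\mathfrak{R}} \twoheadrightarrow \widehat{C^{\mathfrak{R}}}(H)$. Specializing at $t=1$, this gives
$$
\ldet(\Magnus^{\mathfrak{R}}(\exp(\delta))) \;=\; \operatorname{tr}(\mathcal{M}(\delta)) \;=\; \sum_{i=1}^n p^{\mathfrak{R}}\partial_i(\delta(x_i)),
$$
which is precisely the value of the upper composite.

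The real work is packaged into the appendix: one must define $\ldet$ in the complete noncommutative setting and prove $\ldet\circ\exp = \operatorname{tr}$ modulo commutators. Once that identity is in hand, commutativity of the diagram reduces to the one-line infinitesimal computation above, and the final answer is manifestly independent of the auxiliary basis $x$, as it must be since $\ldet$ is intrinsic.
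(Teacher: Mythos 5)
Your proof is correct in spirit, but it takes a genuinely different route from the paper's, and the detour introduces a point that deserves a word more of justification.

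Both proofs reduce to computing $\log\Magnus^{\mathfrak{R}}(\psi)$ and comparing its trace with $\Trace^{\mathfrak{R}}(\log\psi)$. The paper does this directly: writing $\widetilde{\psi}$ for the extension of $\psi$ to an algebra automorphism of $\widehat{T}$, it observes that $\big(\Magnus^{\mathfrak{R}}(\psi)-\Id_M\big)^k$ is induced by $\big(\widetilde{\psi}-\Id_{\widehat{T}_{\geq 1}}\big)^k\otimes\Id_R$ for every $k$, hence $\log\Magnus^{\mathfrak{R}}(\psi)$ is induced by $\log(\widetilde{\psi})\otimes\Id_R$, whose divergence is $\sum_i\partial_i\big(\log(\widetilde{\psi})(x_i)\big)=\sum_i\partial_i\big(\log(\psi)(x_i)\big)$. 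No auxiliary parameter and no BCH considerations enter.

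You instead introduce the formal one-parameter subgroup $t\mapsto\Magnus^{\mathfrak{R}}_x(\exp(t\delta))$ and appeal to the fact that, in the pro-unipotent target, such a curve must equal $\exp\big(t\,\mathcal{M}(\delta)\big)$, which you then identify by a first-order expansion. This is a valid argument, but the crucial step — that $\log$ of the one-parameter subgroup is $\Q$-linear in $t$ — is asserted rather than argued. It holds here because (a) the entries $p^{\mathfrak{R}}\partial_i\big(\exp(t\widetilde{\delta})(x_j)\big)$ are visibly formal power series in $t$ with constant term $\delta_{ij}$, and (b) the commuting values and BCH force $\log\Phi(s+t)=\log\Phi(s)+\log\Phi(t)$, which together with the power-series dependence gives $\log\Phi(t)=t\log\Phi(1)$. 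Once this is spelled out your computation of $\mathcal{M}(\delta)_{ij}=p^{\mathfrak{R}}\partial_i(\delta(x_j))$ and the identity $\ldet\circ\exp=\trace$ (immediate from the appendix's definition $\ldet=\trace\circ\log$) finish the proof exactly as in the paper. What your route buys is a purely infinitesimal check at order $t$; what it costs is the BCH/formal-power-series justification, which the paper's $k$-th-power observation bypasses entirely.
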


\begin{proof}
We set $R:= \widehat{T}^{{\mathfrak{R}}}$ and $M:=  \widehat{T}_{\geq 1} {\otimes}_{\widehat T} \widehat{T}^{{\mathfrak{R}}}$.
Choose a basis $x:=(x_1,\dots,x_n)$ of the vector space $H$, 
which  induces a basis $x\otimes 1 := (x_1 \otimes 1,\dots, x_n \otimes 1)$ of the free $R$-module $M$.

Let $\psi \in \Aut^{\mathfrak{R}}(\widehat \freeLie)$ and let $\widetilde{\psi}$ be  the extension of $\psi$ 
to a filtration-preserving algebra automorphism of $\widehat{T}$.  
Then, for any integer $k\geq 1$,
$$
\left(\Magnus^{\mathfrak{R}}(\psi) -\Id_M\right)^k  \in \End_R(M)
$$
is induced by 
$$
\big(\widetilde{\psi}-\Id_{\widehat{T}_{\geq 1}}\big)^k \otimes \Id_R \in \End_R\big( \widehat{T}_{\geq 1} {\otimes} R\big).
$$ 
It follows that  $\log  \Magnus^{\mathfrak{R}}(\psi)  \in \End_R(M)$
is induced by  $\log ( \widetilde{\psi} )\otimes \Id_R$.
Therefore
\begin{eqnarray*}
\ldet\,  \Magnus^{\mathfrak{R}}(\psi) 
 &=&  \trace \big(\log \Magnus^{\mathfrak{R}}(\psi) \big) \\
&\by{eq:trace_as_usual}& \sum_{i=1}^n \left(\begin{array}{c} \hbox{\small $i$-th coordinate of\  $\log ( \widetilde{\psi} )(x_i) \otimes 1$} \\ 
 \hbox{\small in the basis $x\otimes 1$ of the  $R$-module $M$}  \end{array}\right)\\
&=&  \sum_{i=1}^n \left(\begin{array}{c} \hbox{\small $i$-th coordinate of\  $\log ( \widetilde{\psi} )(x_i) $} \\ 
 \hbox{\small in the basis $x$ of the  $\widehat{T}$-module $\widehat{T}_{\geq 1}$}  \end{array}\right) 
 \ = \ \sum_{i=1}^n \partial_i \big(\log ( \widetilde{\psi} )(x_i)\big). 
\end{eqnarray*}
On the other hand, we have
$$
\Trace^{\mathfrak{R}} \Abel^{\mathfrak{R}}(\psi) =  \Trace^{\mathfrak{R}} \log(\psi)
 \by{eq:divergence} \sum_{i=1}^n \partial_i  \big(\log ({\psi} )(x_i)\big).
$$
Since the restriction of $\log ( \widetilde{\psi} ) \in \Der(\widehat T, \widehat T)$ to $\widehat \freeLie$ is obviously equal to $\log(\psi)$,
we conclude that $\ldet\, \Magnus^{\mathfrak{R}}(\psi) =\Trace^{\mathfrak{R}} \Abel^{\mathfrak{R}}(\psi)$.
\end{proof}

\section{Applications to the automorphism groups of free groups}  \label{sec:autFn}

Before applying the  machinery of the  previous sections to groups of homology cobordisms,
we will give an application to the automorphism groups of free groups. 
In this section,  $F$ is a free group of rank $n$.

\subsection{The Andreadakis filtration}

Let $\AutF := \AutF(F)$ denote the automorphism group of $F$.
The study of the group $\AutF$ by means of its action on the successive nilpotent quotients of $F$
started in Andreadakis' work \cite{andreadakis}. It has been further developed by Johnson \cite{johnson_survey} and Morita \cite{morita} in the context of mapping class groups.
(See  Section \ref{subsec:DN} in this connection.) We briefly review his strategy to study $\AutF$.

Recall that $F =  \Gamma_1 F  \supset \Gamma_2 F \supset \cdots \supset \Gamma_k F  \supset  \Gamma_{k+1} F\supset \cdots $
denotes the lower central series of $F$. 
The \emph{Andreadakis filtration} is the descending sequence of subgroups
$$
\AutF = \AutF[0] \supset \AutF[1]   \supset  \cdots \supset \AutF[k]   \supset  \AutF[k+1]    \supset \cdots
$$
where, for any integer $k\geq 1$, we denote by $\AutF[k]$ the subset of all $f\in\AutF$ such that $f(x)x^{-1}\in \Gamma_{k+1} F$ for all $x\in F$.
In particular, $\IAutF := \AutF[1] $ is the subgroup of $\AutF$ acting trivially on the abelianization $H^\Z := F/[F,F]$ of the group $F$. 
It turns out that
$$
\big[ \AutF[k]  , \AutF[\ell]  \big] \subset \AutF[k+\ell], \quad \hbox{for all } k,l\geq 1.
$$
In particular, the sequence $\IAutF  = \AutF[1]   \supset  \AutF[2]   \supset  \AutF[3]   \supset  \cdots$ constitutes a central series of the group $\IAutF$, and it follows that
\begin{equation} \label{eq:inclusion}
\Gamma^k \IAutF \subset \AutF[k] , \quad \hbox{for all } k\geq 1.
\end{equation}
\noindent
It was conjectured by Andreadakis that the inclusion \eqref{eq:inclusion} is actually an equality \cite{andreadakis}. 
Note that \eqref{eq:inclusion} induces a homomorphism of graded Lie rings 
$$
\Upsilon:  \bigoplus_{k=1}^\infty \frac{ \Gamma_k \IAutF}{ \Gamma_{k+1} \IAutF } \longrightarrow \bigoplus_{k=1}^\infty \frac{\AutF[k]}{ \AutF[k+1]  } 
$$
where the Lie bracket on each side is induced by  group commutators in $\AutF$.

As recalled in \eqref{eq:free_free}, the free Lie ring $\freeLie^\Z:= \freeLie^\Z(H^\Z)$ 
can be identified to the graded Lie ring associated to the lower central series of $F$. 
Thus,  the Andreadakis filtration comes with a sequence of homomorphisms 
\[\tau_k: \AutF[k] \longrightarrow \Hom (F, \Gamma_{k+1}F/\Gamma_{k+2}F) \simeq 
\Hom \big(  H^\Z  , \freeLie_{k+1}^\mathbb{Z}\big)\]
indexed by the integers $k\geq 1$, which are nowadays called the {\it Johnson homomorphisms}.
To be more specific, the homomorphism $\tau_k(f)$ is defined for any $f\in \AutF[k]$ by
\[\tau_k (f) (x) = f(x) x^{-1} \in   \Gamma_{k+1}F/\Gamma_{k+2}F, \quad \hbox{for all } x \in F. \] 
Clearly,  $\tau_k$ is a homomorphism with $\Ker \tau_k =\AutF[k+1]$.  
Hence the sequence of all Johnson homomorphisms 
\[\tau: \bigoplus_{k=1}^\infty  \frac{\AutF[k]}{ \AutF[k+1]} \longrightarrow 
 \bigoplus_{k=1}^\infty  \Hom (H^\Z, \freeLie_{k+1}^{\mathbb{Z}}) \simeq
\Der (\freeLie^{\mathbb{Z}}, \freeLie_{\ge 2}^{\mathbb{Z}})\]
is an injective homomorphism of graded Lie rings.
We refer the reader to the article \cite{satoh_survey}, 
which surveys the study of the Andreadakis filtration over the last fifty years. 

\subsection{Comparison with the lower central series}

We show the following, which supports the Andreadakis conjecture affirmatively
and has also been obtained independently by Bartholdi \cite{bartholdi}. (See Remark \ref{rem:Bartholdi} below.)

\begin{theorem}\label{thm:surjectivity}
The natural homomorphism
\[\Upsilon_k \otimes_\Z \Q :(\Gamma_k \IAutF /\Gamma_{k+1} \IAutF) \otimes_\Z \mathbb{Q} 
\longrightarrow (\AutF[k] / \AutF[k+1]) \otimes_\Z \mathbb{Q}\]
is surjective for all $k \ge 1$ and $n \ge k+2$. 
In particular, the graded Lie algebra
\[\bigoplus_{k = 1}^\infty \frac{\AutF [k] }{\AutF [k+1]} \otimes_\Z \mathbb{Q}\]
is stably generated by its degree $1$ part. 
\end{theorem}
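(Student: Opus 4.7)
The plan is to translate the surjectivity of $\Upsilon_k \otimes \Q$ into a Lie-algebraic question about $\Der(\freeLie, \freeLie_{\ge 2})$ via the Johnson homomorphism, and then to pin down both sides using Morita's trace $\ITrace$.

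First, via the injective graded Lie-ring homomorphism $\tau$ recalled just before the theorem, $\Upsilon_k \otimes \Q$ becomes the inclusion of the degree-$k$ part of the Lie subalgebra of $\Der(\freeLie, \freeLie_{\ge 2}) \otimes \Q$ generated by $\tau_1(\IAutF) \otimes \Q$. Since $\tau_1 \otimes \Q : \IAutF \otimes \Q \to \Hom(H, \freeLie_2) = \Der(\freeLie, \freeLie_{\ge 2})_1$ is known to be surjective (a classical result, realised by the Magnus--Johnson generators $K_{ij}: x_i \mapsto x_j x_i x_j^{-1}$ and $K_{ijk}: x_i \mapsto x_i[x_j,x_k]$), this subalgebra is generated by the entire degree-$1$ component. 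The theorem therefore reduces to the assertion: \emph{the image $\tau_k(\AutF[k]) \otimes \Q$ coincides with the Lie subalgebra of $\Der(\freeLie, \freeLie_{\ge 2})$ generated by its degree-$1$ part, in the range $n \ge k+2$.}

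Second, I would locate both subspaces inside $\ker(\ITrace_k)$. For the subalgebra generated by degree~$1$, this follows from Lemma~\ref{lem:trace_cocycle} with $\mathfrak{R} := \freeLie_{\ge 2}$: since any $\freeLie_{\ge 2}$-valued derivation acts trivially on $C^{\mathfrak{R}}(H) = S(H)$, the $1$-cocycle identity degenerates into a Lie algebra homomorphism statement, so $\ITrace$ annihilates every commutator of $\Der(\freeLie, \freeLie_{\ge 2})$ and, in particular, the degree-$1$-generated subalgebra in degrees $\ge 2$. For the image of $\tau_k$, I would use the invertibility of the Fox Jacobian: for every $f \in \AutF[k]$ the abelianised Magnus matrix $\IMagnus_x(f_\ast)$ is a grouplike unit of $\GL(n, \widehat{S}(H))$, so Lemma~\ref{lem:traces} gives $\ITrace_k \circ \tau_k(f) = \ldet_k \IMagnus(f_\ast)$, which is the degree-$k$ part of the log of a grouplike element of $\widehat{S}(H)$ and hence vanishes for $k \ge 2$.

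Finally, to conclude that these two $\GL(H)$-stable subspaces coincide (and equal $\ker(\ITrace_k) \cap \Der(\freeLie, \freeLie_{\ge 2})_k$), I would use $\GL(H)$-equivariant representation theory combined with Satoh's Theorem~\ref{thm:satoh}: in the stable range $n \ge k+2$, the latter identifies $S^k(H)$ as the unique $\GL(H)$-irreducible component of $\Der(\freeLie, \freeLie_{\ge 2})_k$ detected by $\ITrace_k$, which leaves no room for a $\GL(H)$-submodule strictly between either side and $\ker(\ITrace_k) \cap \Der_k$. The main obstacle is producing explicit $\GL(H)$-equivariant iterated-bracket realisations of every irreducible summand in $\ker(\ITrace_k) \cap \Der_k$, and matching these with lifts through $\tau_k$; the stability hypothesis $n \ge k+2$ enters precisely to ensure that neither construction introduces parasitic $\GL(H)$-summands beyond the single Satoh copy of $S^k(H)$.
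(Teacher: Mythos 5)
Your overall framing—translating $\Upsilon_k\otimes\Q$ into a statement about a Lie subalgebra of $\Der(\freeLie,\freeLie_{\ge 2})$ via $\tau$, and then pinning things down with trace maps and Satoh's theorem—is the right starting point and matches the paper's strategy. But the execution has a gap that kills the argument at its central step.

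The fatal issue is that you work throughout with $\ITrace_k$ (Morita's reduced trace, with target $S^k(H)$) rather than Satoh's trace $\Trace_k$ (with target $C_k(H)$, the space of cyclic words). Satoh's exactness theorem (Theorem~\ref{thm:satoh}) is about $\Trace_k$, and says precisely that $\operatorname{Im}(\tau'_k)\otimes\Q = \ker(\Trace_k)$. Since $\ITrace_k$ factors through $\Trace_k$ and the projection $C_k(H)\to S^k(H)$ has nonzero kernel for $k\ge 3$, we have $\ker(\Trace_k)\subsetneq\ker(\ITrace_k)$ in the relevant range. Your plan—show both $\operatorname{Im}(\tau_k)\otimes\Q$ and the degree-$1$-generated subalgebra lie in $\ker(\ITrace_k)$ and then use $\GL(H)$-representation theory to rule out anything strictly between—cannot close this gap: $\ker(\Trace_k)$ itself is a $\GL(H)$-submodule sitting strictly between the degree-$1$-generated piece and $\ker(\ITrace_k)$, so "no room" is false. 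You were pushed toward $\ITrace_k$ because it is the quotient on which the $\Der(\freeLie,\freeLie_{\ge 2})$-action is trivial, making the cocycle identity degenerate automatically; but that convenience is exactly what throws away the information you need. The full trace $\Trace$ with values in $C(H)$ does \emph{not} vanish on $[\Der(\freeLie,\freeLie_{\ge 2}),\Der(\freeLie,\freeLie_{\ge 2})]$, and you should not try to prove that it does—Satoh's theorem gives you the equality $\operatorname{Im}(\tau'_k)\otimes\Q = \ker(\Trace_k)$ for free.

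What the argument actually needs, and what the paper's Proposition~\ref{prop:ext_satoh} supplies, is the single inclusion $\operatorname{Im}(\tau_k)\otimes\Q\subseteq\ker(\Trace_k)$, i.e.\ $\Trace_k\circ\tau_k=0$ with the full, noncommutative target $C_k(H)$. Your commutative $\ldet$-argument over $\widehat{S}(H)$ (via invertibility of the abelianised Fox Jacobian, whose determinant lies in $\pm H^\Z$) only yields the weaker statement $\ITrace_k\circ\tau_k=0$. To get the vanishing in $C_k(H)$ one has to run the $\ldet$ computation over the noncommutative ring $\widehat{\Q[F]}$ and invoke the triviality of the Whitehead group of the free group $F$ (Higman, Stallings), which forces $\ldet\Magnus_\gamma(f)$ to be a $\Q$-combination of the $\log\gamma_i$'s and hence homogeneous of degree $1$; since $\Trace^{\mathfrak{R}}\Abel^{\mathfrak{R}}(\psi)$ with $\mathfrak{R}=\freeLie_{\ge k+1}$ starts in degree $k\ge 2$, it must be zero, and its degree-$k$ part is exactly $\Trace_k\tau_k(f)$. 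Once you have this, the theorem follows immediately by sandwiching $\operatorname{Im}(\tau'_k)\otimes\Q\subseteq\operatorname{Im}(\tau_k)\otimes\Q\subseteq\ker(\Trace_k)=\operatorname{Im}(\tau'_k)\otimes\Q$, with no representation theory at all.
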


A key ingredient in our proof is the following result due to Satoh \cite{satoh}. 

\begin{theorem}[Satoh]\label{thm:satoh}
For any $k \ge 2$ and $n \ge k+2$, there is an exact sequence
\[(\Gamma_k \IAutF /\Gamma_{k+1} \IAutF ) \otimes_\Z  \Q \xrightarrow{\tau'_k} 
\Hom (H, \freeLie_{k+1}) \xrightarrow{\Trace_k } C_k (H) \longrightarrow 0 \]
where $\tau'_k$ denotes the restriction of $\tau_k$ to $\Gamma_k \IAutF$ 
and $\Trace_k$ is the degree $k$ part of the trace cocycle recalled in Section \ref{subsec:trace}.
\end{theorem}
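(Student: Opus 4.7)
The plan is to reduce the surjectivity of $\Upsilon_k \otimes \Q$, via Satoh's Theorem~\ref{thm:satoh}, to the trace-vanishing statement
\[
\Trace_k \bigl( \tau_k(f) \bigr) = 0 \qquad \text{for every } f \in \AutF[k], \qquad (\star)
\]
and then to prove $(\star)$ using the ``abelianization--Magnus'' correspondence of Section~\ref{sec:abel}.

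Since $\Ker \tau_k = \AutF[k+1]$, the induced map
\[
\tau_k \otimes \Q \, : \, (\AutF[k]/\AutF[k+1])\otimes_\Z \Q \, \hookrightarrow \, \Hom(H, \freeLie_{k+1})
\]
is injective. The identity $\tau_k \circ \Upsilon_k = \tau'_k$ exhibits $\tau'_k$ as $(\tau_k \otimes \Q) \circ (\Upsilon_k \otimes \Q)$; since $\tau_k \otimes \Q$ is injective, $\Upsilon_k \otimes \Q$ is surjective if and only if $\Im(\tau_k \otimes \Q) = \Im(\tau'_k)$ as subspaces of $\Hom(H, \freeLie_{k+1})$. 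The inclusion $\Im(\tau'_k) \subseteq \Im(\tau_k \otimes \Q)$ is immediate from \eqref{eq:inclusion}, while Satoh's Theorem~\ref{thm:satoh} identifies $\Im(\tau'_k)$ with $\Ker \Trace_k$. So everything reduces to $(\star)$.

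To prove $(\star)$, I would fix a group-like expansion $\theta$ of $F$, set $\mathfrak{R} := \freeLie_{\ge k+1}$, and let $\hat f \in \Aut^{\mathfrak{R}}(\widehat \freeLie)$ denote the automorphism induced by $f \in \AutF[k]$. By the very definition of the Andreadakis filtration, $\Abel^{\mathfrak{R}}(\hat f) = \log \hat f$ lies in $\Der(\widehat \freeLie, \widehat \freeLie_{\ge k+1})$ with leading degree-$k$ component equal to $\tau_k(f)$. Reading Lemma~\ref{lem:traces} in degree $k$ then gives
\[
\Trace_k(\tau_k(f)) \ = \ \bigl[\Trace^{\mathfrak{R}} \Abel^{\mathfrak{R}}(\hat f)\bigr]_k \ = \ \bigl[\ldet \, \Magnus^{\mathfrak{R}}(\hat f)\bigr]_k,
\]
so $(\star)$ is equivalent to the vanishing, in positive degrees, of the log-determinant of the Fox-Jacobian matrix of every $f \in \Aut(F)$.

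The hard part---and the main obstacle---is this last vanishing. The plan is to exploit that $f \in \Aut(F)$ acts on the \emph{group} $F$ itself and not merely on $\widehat \freeLie$; consequently $\Magnus^{\mathfrak{R}}(\hat f)$ factors through $\GL_n(\Z[F])$ and is not an arbitrary unit matrix over $\widehat T^{\mathfrak{R}}$. Combining this with the homomorphism property of $\ldet \circ \Magnus^{\mathfrak{R}}$ (which follows from Lemmas~\ref{lem:abelianization} and~\ref{lem:traces}) and a presentation of $\Aut(F_n)$ by Nielsen-type generators should reduce the claim to an explicit computation of log-determinants on a finite list of generators, whose Fox Jacobians have closed form. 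This reduction is of the same spirit as the computations carried out independently by Bartholdi in~\cite{bartholdi}.

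The second assertion of the theorem is then immediate: by definition of the lower central series, the source $\bigoplus_k (\Gamma_k \IAutF/\Gamma_{k+1} \IAutF) \otimes_\Z \Q$ of $\Upsilon \otimes \Q$ is generated as a Lie algebra by its degree-$1$ part, so in the range $n \ge k+2$ the target $\bigoplus_k (\AutF[k]/\AutF[k+1])\otimes_\Z \Q$ is likewise stably generated in degree~$1$.
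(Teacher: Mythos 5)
You have proved the wrong statement. The theorem you were asked to prove is Satoh's Theorem~\ref{thm:satoh}, i.e., the exactness of
\[
(\Gamma_k \IAutF /\Gamma_{k+1} \IAutF ) \otimes_\Z \Q \xrightarrow{\tau'_k}
\Hom (H, \freeLie_{k+1}) \xrightarrow{\Trace_k } C_k (H) \longrightarrow 0,
\]
which has two independent pieces of content: (a) surjectivity of $\Trace_k$ in the stated range, and (b) the inclusion $\Ker \Trace_k \subseteq \Im \tau'_k$ (the inclusion $\Im \tau'_k \subseteq \Ker \Trace_k$ being the easier direction). Your proposal never touches either (a) or (b). Instead, you \emph{invoke} Theorem~\ref{thm:satoh} in your very first paragraph as a known fact, and use it to deduce the surjectivity of $\Upsilon_k \otimes \Q$ --- that is, you are reproducing the logical structure of the paper's proof of Theorem~\ref{thm:surjectivity} and Proposition~\ref{prop:ext_satoh}. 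In the paper, Theorem~\ref{thm:satoh} is a cited ingredient, attributed to Satoh's article \cite{satoh}, and is not proved; the content the paper adds is precisely the vanishing $\Trace_k \circ \tau_k = 0$ on $\AutF[k]$ (Proposition~\ref{prop:ext_satoh}) together with the resulting Theorem~\ref{thm:surjectivity}. So your argument, read as a proof of Theorem~\ref{thm:satoh}, is circular.

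On its own terms your sketch of the vanishing $\Trace_k(\tau_k(f))=0$ is a reasonable outline and close in spirit to the paper's Proposition~\ref{prop:ext_satoh}: it uses the same $\mathfrak{R}=\freeLie_{\geq k+1}$, the translation of $\Abel^{\mathfrak{R}}$ into the Fox--Jacobian picture, and the commutativity of Lemma~\ref{lem:traces}. But the step you flag as ``the hard part'' is handled differently in the paper: rather than chasing Nielsen generators, the paper observes that $\ldet$ on $\GL(n;\Z[F])$ factors through $K_1(\Z[F])$, and uses the Higman--Stallings computation of the Whitehead group of a free group to see that the image lies in degree~$1$. Your ``generator-by-generator'' plan could in principle work but it is not what the argument needs, and, more to the point, none of it bears on the surjectivity half of Satoh's exact sequence, which is the hard content of Theorem~\ref{thm:satoh} and which requires a stability argument (built in Satoh's paper on explicit elements of $\Gamma_k \IAutF$) that is entirely absent from your proposal.
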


Thus, by Satoh's result, Theorem \ref{thm:surjectivity}  follows from the next statement.

\begin{proposition}\label{prop:ext_satoh}
For any  $n \ge 2$ and  $k\geq 2$,
the map $\Trace_k \circ \tau_k: \AutF [k] \to C_k (H)$ is~zero. 
\end{proposition}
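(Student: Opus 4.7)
The plan is to apply Lemma~\ref{lem:traces} with the characteristic ideal $\mathfrak{R} := \freeLie_{\geq k+1}$. For $f\in\AutF[k]$, the induced automorphism $\psi_f$ lies in $\Aut^{\mathfrak{R}}(\widehat\freeLie)$, and because $\mathfrak{R}$ starts in degree $k+1$ we have $C^{\mathfrak{R}}_k(H)=C_k(H)$. Lemma~\ref{lem:traces} then identifies
\[
\Trace_k(\tau_k(f))\,=\,\big(\ldet\Magnus^{\mathfrak{R}}(\psi_f)\big)^{(k)}\;\in\;C_k(H).
\]
Using the Magnus expansion $\theta_\gamma$ associated to a basis $\gamma$ of $F$ and the Hopf identification $\widehat{\Q[F]}\simeq\widehat T$, a direct application of \eqref{eq:Jac} shows that $\Magnus^{\mathfrak{R}}(\psi_f)$ coincides with the reduction modulo $\widehat T\,\mathfrak{R}\,\widehat T$ of the Fox Jacobian $J(f) := (\partial_i\theta_\gamma f(x_j))_{i,j}$.

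The key step is to prove that, for every $f\in\IAutF$, $\ldet J(f)\in\widehat T/[\widehat T,\widehat T]$ lies in the image of $\widehat\freeLie\hookrightarrow\widehat T\twoheadrightarrow\widehat T/[\widehat T,\widehat T]$. Granting this, since any iterated Lie bracket in $\widehat T$ is a sum of algebra commutators, $\freeLie_m\subset[\widehat T,\widehat T]$ for $m\geq 2$, so the image of $\widehat\freeLie$ in $C_m(H)$ vanishes in degrees $m\geq 2$. Hence $(\ldet J(f))^{(k)}=0$ in $C_k(H)$ for all $k\geq 2$, which proves the proposition.

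To establish the key step, the Fox-calculus chain rule $J(fg)=J(f)\cdot\widetilde{\psi}_f(J(g))$ together with additivity of $\ldet$ from Appendix~\ref{sec:cyclic_things} yields the cocycle identity $\ldet J(fg)=\ldet J(f)+\widetilde{\psi}_f(\ldet J(g))$ in $\widehat T/[\widehat T,\widehat T]$. Since $\widetilde{\psi}_f$ is a Hopf automorphism of $\widehat T$ that preserves both $\widehat\freeLie$ and $[\widehat T,\widehat T]$, the set $\{f\in\IAutF:\ldet J(f)\in\widehat\freeLie+[\widehat T,\widehat T]\}$ is a subgroup of $\IAutF$. By Magnus's theorem, $\IAutF$ is generated by the partial conjugations $K_{ij}:x_i\mapsto x_jx_ix_j^{-1}$ (for $i\neq j$) and, when $n\geq 3$, the commutator transvections $K_{ijk}:x_i\mapsto x_i[x_j,x_k]$ (for pairwise distinct $i,j,k$ with $j<k$), each fixing the other $x_l$. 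For any such $K$, the matrix $J(K)-I$ has non-zero entries only in column $i$, so $J(K)=I+u\,e_i^T$ for some column vector $u$ with entries in $\widehat T$; the identity $(ue_i^T)^m=u\,(e_i^Tu)^{m-1}\,e_i^T$ collapses the trace of $\log(I+ue_i^T)$ to the scalar $\log(1+e_i^Tu)=\log J(K)_{ii}$, yielding
\[
\ldet J(K_{ij}) = -\log\theta_\gamma(x_j), \qquad \ldet J(K_{ijk}) = \log\theta_\gamma([x_j,x_k]).
\]
Each is the logarithm of a group-like element of the complete Hopf algebra $\widehat T$, hence a primitive element of $\widehat\freeLie$. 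The main obstacle is this rank-one trace computation on the Magnus generators; once it is carried out, all the preceding steps are routine.
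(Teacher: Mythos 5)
Your approach is genuinely different from the paper's: where the paper deduces that $\ldet\bigl(\Magnus_\gamma(f)\bigr)$ is homogeneous of degree $1$ from the vanishing of the Whitehead group of the free group (Higman--Stallings), so that $K_1(\Z[F])$ is generated by $\pm F$ and $\ldet$ has image spanned by $\log(\gamma_i)$, you instead try to verify directly that $\ldet J(f)$ lands in the image of $\widehat\freeLie$ by checking this on the Magnus generators $K_{ij}$, $K_{ijk}$ of $\IAutF$ via a rank-one trace computation, and transporting the property along $\IAutF$ using the Fox chain rule and the cocycle identity for $\ldet$. That strategy is sound and is an interesting alternative: it replaces the $K$-theory input by Magnus's presentation of $\IAutF$; both proofs ultimately amount to showing the classes of $\log(\gamma_i)$ span everything. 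The rank-one trick $\trace\log(I+ue_i^T)=\log(1+e_i^Tu)=\log J(K)_{ii}$ is correctly executed.

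There is, however, a genuine gap: you use the Magnus expansion $\theta_\gamma$, which is \emph{not} group-like, whereas the argument needs a group-like expansion such as $\theta_\gamma^{\exp}$. This matters in two places. First, you invoke ``the Hopf identification $\widehat{\Q[F]}\simeq\widehat T$'' and Lemma~\ref{lem:traces}; but $\widehat{\theta_\gamma}$ is only an algebra isomorphism, not a Hopf isomorphism, so $\psi_f:=\widehat{\theta_\gamma}\circ\Malcev(f)\circ\widehat{\theta_\gamma}^{-1}$ need not be a Lie automorphism of $\widehat\freeLie$ and hence need not lie in $\Aut^{\mathfrak{R}}(\widehat\freeLie)$, which Lemma~\ref{lem:traces} requires. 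Second, and fatally for your key step, the claim that each of $-\log\theta_\gamma(\gamma_j)$, $\log\theta_\gamma([\gamma_j,\gamma_k])$ is ``the logarithm of a group-like element, hence a primitive element of $\widehat\freeLie$'' is false for $\theta_\gamma$: indeed $\theta_\gamma(\gamma_j)=1+[\gamma_j]$ is not group-like, and $\log(1+[\gamma_j])$ has degree-$2$ term $-\tfrac12[\gamma_j]^2$, which has non-zero image in $C_2(H)$ and thus does \emph{not} lie in $\widehat\freeLie+[\widehat T,\widehat T]$. So the very degree-$2$ vanishing you need would already fail for $K_{ij}$. The fix is simple and is exactly what the paper does: replace $\theta_\gamma$ by $\theta_\gamma^{\exp}$ everywhere (taking $x_i:=\theta_\gamma^{\exp}(\gamma_i-1)$ for the infinitesimal Fox derivatives). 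Then $\theta(\gamma_j)=\exp([\gamma_j])$ is group-like, $\log\theta(\gamma_j)=[\gamma_j]$ and $\log\theta([\gamma_j,\gamma_k])$ are primitive, $\widehat\theta$ is a Hopf isomorphism so $\widetilde\psi_f$ preserves $\widehat\freeLie$ and $[\widehat T,\widehat T]$, and the rest of your argument goes through.
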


\begin{proof}
We first relate two different notions of ``Fox derivations''.
Fix a basis $\gamma$ of~$F$. 
Let  $\frac{\partial \ }{\partial \gamma_1}, \dots, \frac{\partial \ }{\partial \gamma_n}: \Z[F] \to \Z[F]$ be the Fox derivations relative to this basis. 
They are uniquely defined by the following identity:
$$
x- \varepsilon(x) = \sum_{i=1}^n (\gamma_i  -1)  \frac{\partial x }{\partial \gamma_i}, \quad \hbox{for all } x\in \Z[F].
$$ 
(Note that we are considering the augmentation ideal of $\Z[F]$ as a \emph{right} free $\Z[F]$-module,
whereas the usage is to  consider it as a  \emph{left} free $\Z[F]$-module;
thus the Fox derivations in our sense do not satisfy the  ``Leibniz rules'' and the ``chain rule'' 
to which the reader may be used.) 
Let $\theta:= \theta_\gamma^{\exp}: \pi \to \widehat{T}$ be the group-like expansion defined in Example \ref{ex:expansions}, 
and let $x:=(x_1,\dots,x_n)$ where $x_i:=\theta(\gamma_i-1)$ for all $i\in \{1,\dots,n\}$. Then $x$ is a basis of $\widehat{T}_{\geq 1}$ as a right $\widehat{T}$-module,
and we denote by $\partial_1,\dots,\partial_n$ the corresponding Fox derivations:
$$
y- \varepsilon(y) = \sum_{i=1}^n x_i\,  \partial_i(y), \quad \hbox{for all } y \in \widehat{T}.
$$
Then we get 
\begin{equation} \label{eq:Fox_to_Fox}
\theta\Big(\frac{\partial x }{\partial \gamma_i}\Big)  = \partial_i\big(\theta(x)\big), 
\quad \hbox{for all } x\in \Z[F] \hbox{\ and } i \in \{1,\dots,n\}.
\end{equation}

Let $f\in \AutF [k]$. We can associate to $f$ two  types of  ``Jacobian matrices''. 
The first one is the usual Magnus representation of the group automorphism $f$, 
namely
$$
 \Magnus_\gamma (f) := \Big( \frac{\partial f(\gamma_j) }{\partial \gamma_i} \Big)_{i,j} \ \in \GL(n;\Z[F]).
$$
The second one is an ``infinitesimal version'' of the first one.
Specifically, denote by $\Malcev(f)\in \Aut(\Malcev(F))$ the filtration-preserving automorphism of Lie algebras induced by $f$ in the natural way,
and set $\psi:=  \widehat{\theta} \circ  \Malcev(f) \circ \widehat{\theta}^{-1} \in \Aut(\widehat{\freeLie})$.
Then, the constructions of Section \ref{subsec:Magnus} give 
$$
\Magnus_x^{{\mathfrak{R}}}(\psi) \by{eq:Jac}   \Big(p^\mathfrak{R}\partial_i\big( \psi(x_j) \big) \Big)_{i,j}  \in \GL\big(n; \widehat{T}^{\mathfrak{R}}\big)
$$
where  $\mathfrak{R}:=\freeLie_{\ge k+1}$ and $p^{\mathfrak{R}}: \widehat{T} \to  \widehat{T}^{\mathfrak{R}}$ denotes the canonical projection. 
Let $\widehat{\theta}^{\mathfrak{R}}: \widehat{\Q[F]}  \to \widehat{T}^{\mathfrak{R}}$ 
be the composition of the isomorphism $\widehat{\theta}: \widehat{\Q[F]}  \to \widehat{T}$ with $p^{\mathfrak{R}}$.
It follows from \eqref{eq:Fox_to_Fox} that, regarding $\GL(n;\Z[F])$ as a subset of $\GL(n;\widehat{\Q[F]})$, 
\begin{equation}
\label{eq:Magnus_to_Magnus}
\widehat{\theta}^{  \mathfrak{R}} \big( \Magnus_\gamma (f)
\big) = \Magnus_x^{{\mathfrak{R}}}(\psi).
\end{equation}

Consider now the noncommutative notion of ``log-determinant'' for matrices introduced in Example \ref{ex:ldet_matrices}.
Then we have a commutative diagram
\begin{equation}  \label{eq:ldet_ldet}
\xymatrix{
\GL\big(n;  \widehat{\Q[F]} \big)  \ar[d]_-{\widehat{\theta}^{\mathfrak{R}}} \ar[r]^-\ldet & \widehat{\Q[F]}/ \big[ \widehat{\Q[F]} , \widehat{\Q[F]}\big] \ar[d]^-{\widehat{\theta}^{\mathfrak{R}}}  \\
\GL\big(n;\widehat{T}^{\mathfrak{R}} \big)  \ar[r]^-\ldet &\widehat{C}^{\mathfrak{R}}(H) .
}
\end{equation}
We deduce from Lemma \ref{lem:traces} that
\begin{eqnarray}
\notag \Trace^{\mathfrak{R}} \big(\Abel^{\mathfrak{R}} (\psi) \big)&=& \ldet\big( \Magnus^{{\mathfrak{R}}}(\psi) \big) \\
\notag & = &\ldet \big(\Magnus_x^{{\mathfrak{R}}}(\psi)\big) \\
\label{eq:C} & \by{eq:Magnus_to_Magnus}
& \ldet \, \widehat{\theta}^{  \mathfrak{R}} \big(  \Magnus_\gamma (f) 
\big)  \ \by{eq:ldet_ldet} \  \widehat{\theta}^{  \mathfrak{R}}\, \ldet  \big(  \Magnus_\gamma (f)
\big).
\end{eqnarray}

Next,  we consider the following composition of group homomorphisms: 
\begin{equation} \label{eq:ldet_composed}
\GL(n;  {\Z[F]} ) \hookrightarrow \GL(n;  \widehat{\Q[F]}  ) \stackrel{\ldet\, }{\longrightarrow} \widehat{\Q[F]}/ \big[ \widehat{\Q[F]} , \widehat{\Q[F]}\big].
\end{equation}
According to Example \ref{ex:ldet_matrices}, it factorizes through $K_1({\Z[F]})$.
But, since $F$ is a free group, its Whitehead group is trivial by results of Higman \cite{higman} and Stallings \cite{stallings_Wh}.
Therefore the abelian group $K_1({\Z[F]})$ is generated by the elements $(\pm x)$ for all $x\in F$, 
and we deduce that the image of the group homomorphism \eqref{eq:ldet_composed} is spanned by the classes of the elements $\log(\gamma_i)$ for all $i\in\{1,\dots,n\}$. Note that
$$
 \widehat{\theta}^{  \mathfrak{R}}(\log(\gamma_i))  = [\gamma_i] \in \widehat{C}^{\mathfrak{R}}(H)
$$
is homogeneous of degree $1$. 
Then,  we deduce from \eqref{eq:C} that $\Trace ^{\mathfrak{R}} \Abel^{\mathfrak{R}} (\psi)$ must be  homogeneous of degree $1$.

But, since the group automorphism $f$ gives the identity at the level of $F/\Gamma_{k+1} F$, 
the Lie algebra automorphism $\psi$ induces the identity at the level of $\widehat{\freeLie}/\widehat{\freeLie}_{\geq k+1}$.
Consequently, the derivation $\log(\psi)$ viewed as an element of $\Hom(H, \widehat{\freeLie})$
 belongs to the subspace $\Hom(H, \widehat{\freeLie}_{\geq k+1})$.
Therefore  $\Trace ^{\mathfrak{R}} \Abel^{\mathfrak{R}} (\psi)$ starts in degree $k\geq 2$, so that it must be zero.
Moreover, it follows easily from the definitions that the  leading term of $\log(\psi)$ is $\tau_k(f) \in \Hom(H, \widehat{\freeLie}_{k+1})$,
and we conclude that 
$$
\Trace_k \, \tau_k(f) =\big(\hbox{degree $k$ part of } \Trace ^{\mathfrak{R}} \Abel^{\mathfrak{R}} (\psi)\big)=0.
$$

\up
\end{proof}

\begin{remark} \label{rem:Bartholdi}
It seems that the surjectivity of $\Upsilon_k \otimes_\Z \Q$ is also shown by Bartholdi during the proof of \cite[Theorem C]{bartholdi}.
(His method is rather analogous to our proof of Theorem~\ref{thm:surjectivity}, but with different arguments.)
The authors do not know whether the map $\Upsilon_k \otimes_\Z \Q$ is an isomorphism (for $n$ large enough with respect to $k$).
In fact, it can be verified that the bijectivity of ${\Upsilon_k \otimes_\Z \Q}$ (in a stable range) is equivalent to a weaker form of Andreadakis' conjecture: 
that the Andreadakis filtration coincides (in a stable range)  with the \emph{rational} lower central series of $\IAutF$.
In this connection, see \cite{bartholdi_erratum} for a correction to \cite[Theorem A]{bartholdi}.
\hfill $\blacksquare$
\end{remark}

\section{The abelianization map on the  group of homology cobordisms}\label{sec:h_cob}

In this section, we begin to apply the constructions of the previous sections to the group of homology cobordisms.
Let $\Sigma$ be a compact connected oriented surface of genus $g\geq 1$ with one boundary component.

\subsection{The infinitesimal Dehn--Nielsen representation}  \label{subsec:DN}

Firstly, following \cite{massuyeau_IMH}, 
we review the use of symplectic expansions to define infinitesimal versions  of the Dehn--Nielsen representations, 
and we recall the relationship with Johnson homomorphisms.
The reader may consult the survey article \cite{hm}.

As defined in Section \ref{sec:intro}, $\cob:= \cob(\Sigma)$ is the monoid of homology cobordisms from $\Sigma$ to itself,
and  $\Hcob  := \Hcob(\Sigma)$ is  the quotient of $\cob$  by the relation of  $4$-dimensional homology cobordism.
Let $\pi := \pi_1(\Sigma,\star)$ be the fundamental group of $\Sigma$ based at a point $\star \in \partial \Sigma$, and consider its lower central series
$$
\pi= \Gamma_1 \pi \supset \Gamma_2 \pi \supset \cdots \supset \Gamma_k \pi \supset \Gamma_{k+1} \pi \supset \cdots  . 
$$
Let $k\geq 1$ be an integer and let $M\in \cob$. 
The boundary parametrizations $m_\pm: {\Sigma \to M}$ induce isomorphisms in homology so that, according to Stallings \cite{stallings},
they also induce isomorphisms
$$
m_{\pm,*}: \pi/\Gamma_{k+1} \pi \to \pi_1(M,\star)/ \Gamma_{k+1} \pi_1(M,\star)
$$
at the level of the $k$-th nilpotent quotients.
(Here we are tacitly identifying the points $m_+(\star)$ and $m_-(\star)$ to a single point $\star$ 
in the interior of the vertical boundary of $M$.) 
Thus we can consider the group automorphism
$$
\rho_k(M) :=   (m_{-,*})^{-1} \circ m_{+,*} \in \Aut(\pi/\Gamma_{k+1} \pi),
$$
which only depends on the $4$-dimensional homology cobordism class of $M$. This defines a group homomorphism 
$$
\rho_k: \Hcob \longrightarrow  \Aut(\pi/\Gamma_{k+1} \pi), 
$$
which induces an action on the Malcev Lie algebra $\Malcev(\pi/\Gamma_{k+1}\pi)$ 
compatible with the canonical homomorphisms $\Malcev(\pi/\Gamma_{k+1}\pi) \to \Malcev(\pi/\Gamma_{\ell+1}\pi)$ for any  $k\geq \ell$. 
Therefore, by passing to the inverse limit, we obtain an action
$$
\varrho: \Hcob   \longrightarrow  \Aut\big(\Malcev(\pi)\big)
$$
of $\Hcob$ on the complete Lie algebra $\Malcev(\pi)$
or, equivalently, an action of $ \Hcob $ on the complete Hopf algebra $\widehat{\Q[\pi]}$.

One can think of $\varrho$ as an ``infinitesimal'' version of the Dehn--Nielsen representation. 
Usually, the \emph{Dehn--Nielsen representation} refers to the  canonical action $\rho: \mcg \to \Aut(\pi)$ 
of the mapping class group $\mcg := \mcg(\Sigma)$ on the fundamental group $\pi$ of $\Sigma$.
The mapping cylinder construction defines a map $\mathsf{c}: \mcg \to \Hcob$, and it is easily checked that we have a commutative diagram
$$
\xymatrix{
\mcg \ar[d]_-{\mathsf{c}} \ar[r]^-\rho & \Aut(\pi)  \ar[d]^-{\Malcev} \\
\Hcob \ar[r]_-\varrho & \Aut(\Malcev(\pi)).
}
$$
Since $\pi$ is a free group, the canonical map $\pi \to \widehat{\Q[\pi]}$ is injective 
so that the canonical homomorphism $\Malcev: \Aut(\pi) \to   \Aut(\Malcev(\pi))$ is injective.
A classical result of Dehn and Nielsen asserts that $\rho$ is injective, and it follows that $\mathsf{c}$ is 
injective too~\cite{gl}. 
In the sequel, we will omit the notation $\mathsf{c}$ and we will simply regard $\mcg$ as a subgroup of~$\Hcob$.

Let now  $\theta:  \pi \to \widehat{T}(H)$ be a symplectic expansion in the sense of Section \ref{subsec:symp_expansion}, where $H:= H_1(\Sigma;\Q)$.
Then, for any $M\in \Hcob$, the automorphism $\varrho(M)$ fixes $\log(\zeta)$ where  $\zeta:= [\partial \Sigma] \in \pi$, 
so that the automorphism $\varrho^\theta(M) := \widehat \theta \circ \varrho(M) \circ \widehat \theta^{-1}$ of the complete free Lie algebra $\widehat{\freeLie} = \widehat{\freeLie}(H)$
fixes the bitensor $\omega$ corresponding the homology intersection form. Hence we get a group homomorphism
$$
\varrho^\theta: \Hcob   \longrightarrow  \Aut_\omega\!\big(\widehat{\freeLie} \big).
$$

The \emph{Johnson filtration} of  $\Hcob$ is the descending sequence of subgroups 
$$
\Hcob = \Hcob[0] \supset \Hcob[1] \supset \cdots \supset \Hcob[k] \supset \Hcob[k+1] \supset \cdots
$$
that is defined by $\Hcob[k]:=\Ker \rho_{k+1}$ for any integer $k\geq 0$. 
In particular, $\Hcyl := \Hcob[1]$ is the \emph{group of homology cylinders} over $\Sigma$ or, equivalently,
it  is the kernel of the group homomorphism
$$
\sigma  : \Hcob \longrightarrow \Sp(H)
$$
that assigns to any $M\in \Hcob$ the linear map   $(m_{-,*})^{-1} \circ m_{+,*}$ 
where $m_{\pm,*}$ is the map induced by $m_\pm$ in homology.  (Note that $\sigma  =  \rho_1$.) 
By restricting to mapping cylinders, we obtain the \emph{Johnson filtration}
$$
\mcg = \mcg[0] \supset \mcg[1] \supset \cdots \supset \mcg[k] \supset \mcg[k+1] \supset \cdots
$$
of the mapping class group, whose first term $\Torelli := \mcg[1]$ is the \emph{Torelli group} of $\Sigma$. 
If we regard $\Torelli$ as a subgroup of $\Aut(\pi)$ via the Dehn--Nielsen representation, 
the Johnson filtration corresponds to the Andreadakis filtration introduced in Section~\ref{sec:autFn}.

For any $M\in \Hcyl$, we have $\log \varrho^\theta(M) \in \Der_\omega(\widehat{\freeLie}, \widehat{\freeLie}_{\geq 2}) = \widehat{\mathfrak{h}}^+$ 
and, for any integer $k\geq 1$, 
 $M$ belongs to $\Hcob[k]$ if and only if  $\log \varrho^\theta(M)$ starts in degree $k$. The \emph{$k$-th Johnson homomorphism} is the group homomorphism
$$
\tau_k: \Hcob[k] \longrightarrow \mathfrak{h}_k
$$
that associates to any $M \in \Hcob[k]$ the leading term of $\log \varrho^\theta(M)$. 
Recall that $\mathfrak{h}_k$ denotes the subspace of $ \widehat{\mathfrak{h}}$ consisting of derivations of $\widehat{\freeLie}$  increasing degrees by $k$
(and vanishing on $\omega$): thus  $\mathfrak{h}_k$  can be viewed as a subspace of $\Hom(H, \freeLie_{k+1})$.
Actually, it is easy to formulate $\tau_k$ only in terms of  $\rho_{k+1}:  \Hcob \to  \Aut(\pi/\Gamma_{k+2} \pi)$.
This shows that $\tau_k$ takes values in 
$$
\mathfrak{h}_k^\Z\subset \Hom(H^\Z, \freeLie^\Z_{k+1})
$$
where $H^\Z:= H_1(\Sigma;\Z)$ and $ \freeLie^\Z$ is the free Lie ring generated by $H^\Z$ in degree $1$;
in particular, the restriction of $\tau_k$ to $\mcg[k]$  coincides with the $k$-th Johnson homomorphism introduced in Section \ref{sec:autFn}. 

The Johnson homomorphisms  have been originally introduced by Johnson himself for  subgroups of
the mapping class groups \cite{johnson,johnson_survey},
 and they have been henceforth studied by Morita \cite{morita}.
Their extension to the group of homology cobordisms, which is already evoked   in \cite{habiro}, 
has been  introduced by Garoufalidis \& Levine \cite{gl}.
In particular, they proved using surgery techniques and cobordism theory that $\tau_k: \Hcob[k] \to \mathfrak{h}_k^\Z$ is surjective. 
(See also \cite{turaev} for a similar result and \cite{habegger} for an independent proof of this important fact.)

\subsection{The abelianization map} \label{subsec:abel}

Let $\theta$ be a symplectic expansion of~$\pi$. We consider the map $\Abel^\theta: \Hcob \to \widehat{H_1} (\widehat{\mathfrak{h}}^+)$ defined by 
$$
\Abel^\theta(M) := \log\big(\varrho^\theta(M) \sigma(M)^{-1}\big)
$$
for any $M \in \Hcob$. 
Here $\varrho^\theta:  \Hcob \to \Aut_\omega(\widehat{\freeLie})$  and  $\sigma : \Hcob \to  \Sp(H)$ 
 are the group homomorphisms introduced in Section \ref{subsec:DN}, 
and $\Sp(H)$ is embedded in $\Aut_\omega(\widehat{\freeLie})$ in the canonical way.

\begin{theorem}\label{thm:main}
$(1)$ The map $\Abel^\theta$ is a group $1$-cocycle: for all   $M, N \in \Hcob$, we have 
$$
\Abel^\theta(MN) = \Abel^\theta(M) + \sigma(M)\cdot \Abel^\theta(N).
$$
$(2)$ The restriction $\IAbel$ of $\Abel^\theta$ to $\Hcyl$ does not depend on $\theta$.

\noindent
$(3)$ The map $\IAbel: \Hcyl \to \widehat{H_1} (\widehat{\mathfrak{h}}^+)$ is an $\Hcob$-equivariant group homomorphism.

\noindent
$(4)$ For any $d\geq 1$, the image of $\IAbel$ truncated at degree $\leq d$ spans the $\Q$-vector space  $\widehat{H_1}(\widehat{\mathfrak{h}}^+)/\widehat{H_1}(\widehat{\mathfrak{h}}^+)_{\geq d+1}$.

\end{theorem}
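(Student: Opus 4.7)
The plan is to derive all four assertions from the algebraic machinery of Section~\ref{sec:abel} via the group homomorphism $\varrho^\theta\colon \Hcob\to\Aut_\omega(\widehat\freeLie)$. The key bookkeeping observation is that $\Abel^\theta = \Abel\circ\varrho^\theta$ and that $\sigma\colon\Hcob\to\Sp(H)$ agrees with the composite $\sigma\circ\varrho^\theta$, since both describe the induced action on the first nilpotent quotient $H=\widehat\freeLie_1$.

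For (1), I would apply Lemma~\ref{lem:abelianization}(1) to $\psi:=\varrho^\theta(M)$ and $\varphi:=\varrho^\theta(N)$, so that the cocycle identity transfers directly. The first half of (3) is then immediate by specializing (1) to $M,N\in\Hcyl$, where $\sigma(M)=\sigma(N)=\Id$. The $\Hcob$-equivariance in (3) uses the normality of $\Hcyl$ in $\Hcob$ and the identity $\varrho^\theta(MNM^{-1}) = \varrho^\theta(M)\,\varrho^\theta(N)\,\varrho^\theta(M)^{-1}$, combined with Lemma~\ref{lem:abelianization}(2).

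For (2), given two symplectic expansions $\theta,\theta'$, consider $\phi:=\widehat{\theta'}\circ\widehat\theta^{-1}\in\Aut(\widehat\freeLie)$. The expansion property $\theta(x)=1+[x]+\cdots$ forces $\phi$ to be the identity on $H=\widehat\freeLie_1$, whence $\phi\in\IAut(\widehat\freeLie)$; the symplectic condition $\theta(\zeta)=\theta'(\zeta)=\exp(-\omega)$ then forces $\phi(\omega)=\omega$, so that $\phi=\exp(\xi)$ for some $\xi\in\widehat{\mathfrak{h}}^+$. For any $M\in\Hcyl$ one has $\varrho^{\theta'}(M) = \phi\,\varrho^\theta(M)\,\phi^{-1}$, whence
\[
\log\varrho^{\theta'}(M) \;=\; e^{\mathrm{ad}_\xi}\bigl(\log\varrho^\theta(M)\bigr) \;\equiv\; \log\varrho^\theta(M)
\]
modulo the closure of $[\widehat{\mathfrak{h}}^+,\widehat{\mathfrak{h}}^+]$, giving independence of $\theta$.

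For (4), the target $\widehat{H_1}(\widehat{\mathfrak{h}}^+)/\widehat{H_1}(\widehat{\mathfrak{h}}^+)_{\geq d+1}$ is the finite-dimensional $\Q$-vector space $\bigoplus_{k=1}^d H_1(\mathfrak{h}^+)_k$. For any $M\in\Hcob[k]\subset\Hcyl$ with $1\leq k\leq d$, the derivation $\log\varrho^\theta(M)$ begins in degree $k$ with leading term $\tau_k(M)\in\mathfrak{h}_k$, so the truncated image $\IAbel(M)$ lies in $\bigoplus_{j=k}^{d} H_1(\mathfrak{h}^+)_j$ with degree-$k$ component $[\tau_k(M)]$. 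The surjectivity of $\tau_k\colon\Hcob[k]\to\mathfrak{h}_k^\Z$ recalled in Section~\ref{subsec:DN} (Garoufalidis--Levine, Habegger) shows that $[\tau_k(M)]$ runs through a $\Z$-spanning, hence $\Q$-spanning, set of $H_1(\mathfrak{h}^+)_k$. An upper-triangular argument running $k$ from $d$ down to $1$ (pick $M$'s realizing a basis of $H_1(\mathfrak{h}^+)_d$ first, then for each decreasing $k$ realize a basis of $H_1(\mathfrak{h}^+)_k$ and correct the higher-degree tails using elements already obtained) shows that these truncated images span the whole target. The main obstacle is clearly (4), whose substance is the nontrivial topological input of surjectivity of the Johnson homomorphisms on $\Hcob[k]$ in every degree $k\leq d$; parts (1)--(3) are purely formal transfers of Lemma~\ref{lem:abelianization} along $\varrho^\theta$ together with the elementary structural comparison of two symplectic expansions.
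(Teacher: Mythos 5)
Your proof is correct and follows essentially the same route as the paper's: parts (1) and (3) are read off from Lemma~\ref{lem:abelianization} via the composition $\Abel^\theta=\Abel\circ\varrho^\theta$, part (2) is handled by conjugating $\varrho^\theta$ by a fixed element of $\IAut_\omega(\widehat\freeLie)$ and invoking a BCH-type cancellation modulo the closure of $[\widehat{\mathfrak h}^+,\widehat{\mathfrak h}^+]$, and part (4) rests on the surjectivity of $\tau_k\colon\Hcob[k]\to\mathfrak h_k^\Z$ together with a leading-term/triangularity argument. The only cosmetic differences are that the paper carries out the computation for (2) over all of $\Hcob$ (yielding formula~(\ref{eq:for_further_use}) which it reuses later) and phrases (4) as density of the span in the completion rather than spanning of each finite-degree truncation, but both are equivalent.
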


\begin{proof}
The map $\Abel^\theta$ is the composition of the group homomorphism $\varrho^\theta: \Hcob \to \Aut_\omega(\widehat{\freeLie})$
with  the  map $\Abel: \Aut(\widehat\freeLie)  \to  \widehat{H_1}\big(\Der(\widehat\freeLie,\widehat \freeLie_{\geq 2})\big)$ 
that has been considered in Section \ref{subsec:abelianization}. Thus the statements (1) and (3) follow from Lemma \ref{lem:abelianization}.

We now prove (2). Let $\theta'$ be another symplectic expansion of $\pi$:
there exists $\psi \in \IAut_\omega(\widehat{\freeLie})$ such that 
$\theta'= \psi \circ \theta$. Then, for any $M \in \Hcob$, we have
\begin{eqnarray*}
\varrho^{\theta'}(M) \sigma(M)^{-1}
& = & \psi \varrho^\theta(M) \psi^{-1} \sigma(M)^{-1} \\
& = & \psi   \big(\varrho^{\theta}(M) \sigma(M)^{-1}\big)  \big(\sigma(M) \psi^{-1} \sigma(M)^{-1}\big);
\end{eqnarray*}
hence
\begin{eqnarray}
\Abel^{\theta'} (M)
\notag & \equiv & \log(\psi) + \Abel^{\theta} (M) + \log (\sigma(M) \psi^{-1} \sigma(M)^{-1})\\
\notag &= &\log(\psi) + \Abel^{\theta} (M) +  \sigma(M) \log(\psi^{-1})  \sigma(M)^{-1} \\
\label{eq:for_further_use} &=& \Abel^{\theta} (M) + \log(\psi) - \sigma(M) \cdot \log(\psi)
\end{eqnarray}
where $\equiv$ denotes a congruence modulo the closed subspace spanned 
by Lie brackets in $\widehat{\mathfrak{h}}^+$.
In particular, for any $M \in \Hcyl$, we obtain $\Abel^{\theta'} (M) = \Abel^{\theta} (M)$.

Finally, we prove (4) in the following equivalent form:  
the $\Q$-vector space span\-ned by the image of $\IAbel$ is dense in $\widehat{H_1}(\widehat{\mathfrak{h}}^+)$ 
with respect to the degree topology. 
Let $R\subset \widehat{\mathfrak{h}}^+$ be the $\Q$-vector space spanned by the image of $\log \varrho^\theta$:
it suffices to show that $R$ is dense in  $\widehat{\mathfrak{h}}^+$.
For a given element $x$ in $\widehat{\mathfrak{h}}^+$, we shall construct a sequence $(y_n)_{n\geq 1}$ in $\widehat{\mathfrak{h}}^+$
such that $y_n$ belongs to  $\widehat{\mathfrak{h}}^+_{\geq n}\cap R$ for any $n\geq 1$ and $x-\sum_{n=1}^N y_n$  belongs to $\widehat{\mathfrak{h}}^+_{\geq N+1}$ for any $N \geq 1$:
it will follow that $x= \sum_{n=1}^\infty y_n$ is the limit of a sequence of elements of $R$.
We  proceed inductively as follows. 
Assume that $y_1, \dots, y_{N-1}$ have been constructed with the required properties.
Let $u_{N} \in \mathfrak{h}_{N}$ be the degree $N$ part of $x-\sum_{n=1}^{N-1} y_n$ 
and let $z_{N} \in \Z\setminus\{0\}$ be such that $z_{N} u_{N}$ belongs  to $\mathfrak{h}_{N}^\Z$. 
By the result of  Garoufalidis \& Levine  that has been recalled at the end of Section \ref{subsec:DN},
there exists a $C_N \in \Hcob[N]$ such that $\tau_N(C_N)= z_{N} u_{N}$. 
Hence $y_N:= \frac{1}{z_N} \log\varrho^\theta(C_N)$ belongs to $\widehat{\mathfrak{h}}^+_{\geq N}\cap R$, and 
$$
x- \sum_{n=1}^N y_n =  \Big(x- \sum_{n=1}^{N-1} y_n\Big) - y_N = u_N  +  (\deg \geq N+1)- y_N
$$ 
belongs to $\widehat{\mathfrak{h}}^+_{\geq N+1}$, which shows the inductive step.
\end{proof}

We now explain how the $1$-cocycle $\Abel^\theta: \Hcob \to \widehat{H_1}(\widehat{\mathfrak{h}}^+)$ relates to the constructions of \cite{morita_GD} for $g\geq 2$.
First, we regard it  as a group homomorphism
$$
\widetilde\Abel^\theta : \Hcob \longrightarrow \widehat{H_1} (\widehat{\mathfrak{h}}^+) \rtimes \Sp(H). 
$$
Next, by composing with the projection $p:\widehat{H_1} (\widehat{\mathfrak{h}}^+)  \to  \mathfrak{h}_1  = \wedge^3 H$ 
and with the  map $\ITrace: \widehat{H_1} (\widehat{\mathfrak{h}}^+)  \to \widehat{S}(H)$ in degrees greater than $1$, we obtain a group homomorphism 
$$
\mathsf{Mor}^\theta := \big((p \oplus \ITrace )\rtimes \operatorname{Id} \big) \circ  \widetilde\Abel^\theta : 
\Hcob \longrightarrow \Big(\wedge^3 H \oplus \prod_{k=1}^\infty S^{2k+1}( H ) \Big) \rtimes \Sp(H).
$$ 
(According to \eqref{eq:nullity}, we can ignore the even part of $\widehat{S}(H)$.) 
For any integer $d \geq 0$, the image of $\mathsf{Mor}^\theta$ truncated at degree $\leq (2d+1)$  is Zariski-dense in
$$
\Big(\wedge^3 H \oplus \bigoplus_{k=1}^{d}  S^{2k+1}( H ) \Big) \rtimes \Sp(H).
$$
(This follows from  \eqref{eq:surjectivity}, the last statement of Theorem \ref{thm:main} and 
the fact that $\Sp(2g;\Z)$ is Zariski-dense  in $\Sp(2g;\Q)$.) 
Thus, the homomorphism $\mathsf{Mor}^\theta$ enjoys the same properties as  the homomorphism   constructed in \cite[Theorem 5.1]{morita_GD}. 

\begin{remark}
Morita's construction in \cite{morita_GD} needs iterated extensions of nilpotent groups and is based on the theory of algebraic groups.
Our definition of $\mathsf{Mor}^\theta$ seems to be more  direct, and easier to study too.
For instance, it follows directly from Theorem~\ref{thm:main} (2) \& (3) that its restriction
$$
\mathsf{Mor} : \Hcyl \longrightarrow \wedge^3 H \oplus \prod_{k=1}^\infty S^{2k+1}( H ) 
$$
to the group of homology cylinders is canonical and $\Hcob$-equivariant.

Conant, Kassabov and Vogtmann  showed in \cite{CKV} that 
there exist many pieces of $H_1 (\mathfrak{h}^+)$ other than 
Johnson's component  $\wedge^3 H$ and 
Morita's trace components $\bigoplus_{k=1}^\infty S^{2k+1}( H )$.  
Thus our $1$-cocycle $\Abel^\theta$ gives actually a larger target than Morita's original one. \hfill $\blacksquare$
\end{remark}

\subsection{Relation with the LMO homomorphism}

We now explain how the homomorphism $\IAbel: \Hcyl \to \widehat{H_1} (\widehat{\mathfrak{h}}^+)$ is related to the theory of finite-type invariants.

Let $\cyl$ be the submonoid of $\mathcal{C}$ that acts trivially on $H_1(\Sigma;\Z)$:
the quotient of $\cyl$ by the $4$-dimensional relation of homology cobordism is the group $\Hcyl$ of homology cylinders defined in Section~\ref{subsec:DN}.
The \emph{LMO homomorphism} is a multiplicative~map
$$
Z: \cyl \longrightarrow \widehat{\A}(H)
$$ 
with values in the  algebra $\widehat{\A}(H)$ of \emph{symplectic Jacobi diagrams}. 
We will not recall the definitions here, refering to   \cite{hm} for a survey and to \cite{hm_SJD} for further details.
In fact, we only need the ``tree-reduction'' $Z^t$ of $Z$ which we outline below.

The closed subspace of $\widehat{\A}(H)$ spanned by looped symplectic Jacobi diagrams is an ideal: 
thus we can consider the quotient algebra, which is denoted by $\widehat{\A}^t(H)$. 
As a vector space, ${\A}^t(H)$ can  be defined by generators and relations as follows. 
The generators of ${\A}^t(H)$ are tree-shaped unitrivalent finite graphs, 
whose univalent vertices are colored by $H$ and whose trivalent vertices are oriented. 
(We exclude trees having a connected component without trivalent vertex.) 
The relations are the ``multilinearity'' relation at  the $H$-colored univalent vertices, and the ``AS'', ``IHX'' relations shown below: 
\begin{center}
\labellist \small \hair 2pt
\pinlabel {AS} [t] at 102 -5
\pinlabel {IHX} [t] at 543 -5
\pinlabel {$= \ -$}  at 102 46
\pinlabel {$-$} at 484 46
\pinlabel {$+$} at 606 46
\pinlabel {$=0$} at 721 46 
\endlabellist
\centering
\includegraphics[scale=0.4]{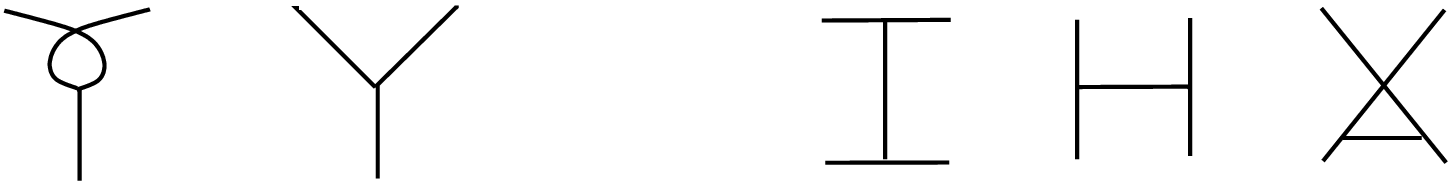}
\end{center}
\vspace{0.5cm}
(In the figures, the orientation at each trivalent vertex of a  tree is assumed to be counter-clockwise.) 
The {\em degree} of a tree  being defined as the number of its trivalent vertices, 
we obtain a grading on the space ${\A}^t(H)$,
and $\widehat{\A}^t(H)$ is the degree-completion of ${\A}^t(H)$. 

The subspace ${\A}^{t,c}(H)$ of ${\A}^t(H)$ spanned by connected tree diagrams is known to be isomorphic 
to the  graded space $\mathfrak{h}^+$ via the degree-preserving map
\begin{equation}  \label{eq:eta}
\eta: \A^{t,c}(H) \ {\longrightarrow}\  \mathfrak{h}^+ \subset H \otimes  \freeLie_{\geq 2},
\quad T \longmapsto \sum_v \operatorname{col}(v) \otimes T_v.
\end{equation}
Here, the sum is over all univalent vertices $v$ of $T$: $ \operatorname{col}(v)\in H$ denotes its color,
and $T_v$ is the Lie word in $H$ that can be read from $T$ when the latter is ``rooted'' at~$v$.
The algebra structure of ${\A}^t(H)$, which we have evoked above, 
induces a Lie bracket on $ \A^{t,c}(H)$ corresponding to the Lie bracket of derivations via $\eta$. 

The composition of $Z$ with the canonical projection $\widehat{\A}(H) \to \widehat{\A}^t(H)$ is denoted~by
$$
Z^t: \cyl \longrightarrow \widehat{\A}^t(H).
$$ 
The closed subspace of $\widehat{\A}^t(H)$ spanned by connected tree diagrams is denoted by $\widehat{\A}^{t,c}(H)$.
The algebra $\widehat{\A}^t(H)$ is actually a complete Hopf algebra whose primitive part is $\widehat{\A}^{t,c}(H)$, 
and the map $Z^t$ takes values in the group-like part of $\widehat{\A}^{t}(H)$.
It is proved in \cite{massuyeau_IMH} that there is a  symplectic expansion $\theta$ such that the following diagram is commutative:
\begin{equation}   \label{eq:Z^t}
\xymatrix @R=1em {
 & \operatorname{GLike} \widehat{\A}^t(H) \ar[r]^-{\log}_-\simeq &  \widehat{\A}^{t,c}(H) \ar[dd]_-\simeq^-{\eta} \\
\cyl \ar[ru]^-{Z^t} \ar[rd]_-{\varrho^{\theta}} & & \\
& \IAut_\omega(\widehat{\freeLie}) \ar[r]_-{\log}^-\simeq  & \widehat{\mathfrak{h}}^+;
}
\end{equation}
in particular, $Z^t$ factorizes through $\Hcyl$. 
(This ``preferred'' symplectic expansion $\theta$ is constructed using the LMO functor \cite{chm}, from which $Z$ originates.) Let 
 $$
 z^t:\Hcyl \longrightarrow \widehat{H_1}\big(  \widehat{\A}^{t,c}(H)\big)
 $$
be the composition of $\log \circ Z^t$ with the canonical map $  \widehat{\A}^{t,c}(H) \to \widehat{H_1}\big(  \widehat{\A}^{t,c}(H)\big)$.
The following  is a direct consequence of  \eqref{eq:Z^t}.

\begin{proposition} \label{prop:Z^t}
We have a commutative diagram:
$$
\xymatrix @R=0.75em  {
&  \widehat{H_1}\big(  \widehat{\A}^{t,c}(H)\big) \ar[dd]^-{\eta_*}_\simeq   \\
\Hcyl \ar[ru]^-{z^t} \ar[rd]_-\IAbel  & \\
& \widehat{H_1} \big( \widehat{\mathfrak{h}}^+ \big)
}
$$
\end{proposition}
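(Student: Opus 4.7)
The plan is to reduce the proposition to the commutative diagram \eqref{eq:Z^t} by exploiting the freedom to choose the symplectic expansion. By Theorem~\ref{thm:main}(2), the restriction $\IAbel = \Abel^\theta|_\Hcyl$ does not depend on the choice of symplectic expansion $\theta$. Hence I am free to work with the ``preferred'' symplectic expansion coming from the LMO functor, namely the $\theta$ for which diagram \eqref{eq:Z^t} is commutative.

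With this choice fixed, I unwind the definitions. Let $M \in \Hcyl$; by definition of $\Hcyl = \Ker(\sigma)$, the element $\sigma(M)$ is the identity of $\Sp(H)$, so
\[
\Abel^\theta(M) \ = \ \log\bigl(\varrho^\theta(M)\, \sigma(M)^{-1}\bigr) \ = \ \log \varrho^\theta(M) \ \in \ \widehat{\mathfrak{h}}^+,
\]
and $\IAbel(M)$ is the class of $\log \varrho^\theta(M)$ in $\widehat{H_1}(\widehat{\mathfrak{h}}^+)$. On the other hand, $z^t(M)$ is the class of $\log Z^t(M)$ in $\widehat{H_1}(\widehat{\A}^{t,c}(H))$. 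Applying the diagram \eqref{eq:Z^t} at the level of Lie algebras gives the identity $\eta(\log Z^t(M)) = \log \varrho^\theta(M)$ in $\widehat{\mathfrak{h}}^+$; projecting both sides to $\widehat{H_1}(\widehat{\mathfrak{h}}^+)$ yields $\eta_*(z^t(M)) = \IAbel(M)$.

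The one point that needs a brief justification is that the map $\eta_*$ is well-defined. Since $\eta: \widehat{\A}^{t,c}(H) \to \widehat{\mathfrak{h}}^+$ is an isomorphism of complete Lie algebras (recall that the bracket on $\widehat{\A}^{t,c}(H)$ induced by the algebra structure of $\widehat{\A}^t(H)$ corresponds to the bracket of derivations via $\eta$, as recalled just after \eqref{eq:eta}), it carries the closure of the image of its Lie bracket isomorphically onto the closure of the image of the bracket on $\widehat{\mathfrak{h}}^+$. Therefore $\eta$ descends to a filtration-preserving isomorphism $\eta_*: \widehat{H_1}(\widehat{\A}^{t,c}(H)) \to \widehat{H_1}(\widehat{\mathfrak{h}}^+)$, and the commutativity of the triangle follows. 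There is no real obstacle here: the content of the proposition lies entirely in \eqref{eq:Z^t}, and the task reduces to abelianizing that diagram after observing that $\sigma$ trivializes on $\Hcyl$.
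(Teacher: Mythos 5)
Your proof is correct and takes essentially the same route as the paper, which simply asserts the proposition as ``a direct consequence of \eqref{eq:Z^t}''; you have usefully made explicit the two implicit points (that Theorem~\ref{thm:main}(2) lets one take the preferred $\theta$ of \eqref{eq:Z^t}, and that $\eta$ being a Lie algebra isomorphism makes $\eta_*$ well-defined on $\widehat{H_1}$).
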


This proposition has several consequences which we now discuss. 
In one direction,  since $\IAbel$ is a canonical homomorphism (by statement (2) of Theorem \ref{thm:main}),
it shows that $z^t$ does not depend on the choices upon which the construction of $Z$ is based
(namely a system of meridians \& parallels on $\Sigma$, and a Drinfeld associator).
In the other direction, we deduce two further properties for $\IAbel$.

The first property is about stabilizations.
Let $\Sigma'$ be the boundary-connected sum of $\Sigma$ 
with a compact connected oriented surface of genus $1$ having a single boundary component.
The notations $\Hcob, H,\dots$  used for $\Sigma$ have their exact analogues $\Hcob', H',\dots$ for $\Sigma'$.
Note that the inclusion $\Sigma \hookrightarrow \Sigma'$ induces some maps $\Hcob \to \Hcob'$, $H\to H',\dots$

\begin{cor} \label{cor:stabilization}
The following diagram is commutative:
$$
\xymatrix{
 \Hcyl \ar[r]^-\IAbel  \ar[d] & \widehat{H_1} (\widehat{\mathfrak{h}}^+) \ar[d] \\
  \Hcyl' \ar[r]_-\IAbel & \widehat{H_1} (\widehat{\mathfrak{h}}^{'+})
}
$$
\end{cor}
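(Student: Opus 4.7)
The plan is to deduce the corollary from Proposition \ref{prop:Z^t}, which factorizes $\IAbel$ as $\eta_* \circ z^t$, and similarly for $\Sigma'$. It thus suffices to prove the analogous commutative square with $\IAbel$ replaced by $z^t$, together with the naturality of the isomorphism $\eta_*$ with respect to the inclusion $H \hookrightarrow H'$.

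First I would verify the naturality of $\eta$. The map $\widehat{\A}^{t,c}(H) \hookrightarrow \widehat{\A}^{t,c}(H')$ reinterprets an $H$-colored tree as an $H'$-colored tree via $H \hookrightarrow H'$, while $\widehat{\mathfrak{h}}^+ \hookrightarrow \widehat{\mathfrak{h}}^{'+}$ extends a derivation by zero on the new generators $H''$ (the $\omega'$-orthogonal complement of $H$ in $H'$). A short check using the orthogonal decomposition $\omega' = \omega + \omega''$ of the symplectic bivector shows that such an extension indeed lies in $\widehat{\mathfrak{h}}^{'+}$. Since $\eta(T) = \sum_v \operatorname{col}(v) \otimes T_v$ involves only colors in $H$ and Lie words in $\freeLie(H) \subset \freeLie(H')$ whenever $T$ is $H$-colored, the naturality of $\eta_*$ is immediate.

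Second, the commutativity of the $z^t$-square is a consequence of the stabilization property of the LMO homomorphism. For $M \in \cyl$, its stabilization $M' := M \cup \big((\Sigma' \setminus \operatorname{int}\Sigma) \times [-1,1]\big) \in \cyl'$ decomposes as a horizontal composition of cobordisms, whose second factor is the trivial cobordism on the added genus-one handle and therefore has trivial LMO value (the empty Jacobi diagram). By the functoriality of the LMO functor \cite{chm}, this should yield $Z(M') = \iota_*\, Z(M)$, where $\iota_*: \widehat{\A}(H) \hookrightarrow \widehat{\A}(H')$ is the canonical algebra inclusion. Projecting onto $\widehat{\A}^t$, taking $\log$ and then passing to the quotient by commutators, one obtains $z^t(M') = \iota_*\, z^t(M)$, as desired.

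The main obstacle is to justify the identity $Z(M') = \iota_*\, Z(M)$ rigorously; this requires unpacking the definition of the LMO functor in \cite{chm}, but the essential point is that identity cobordisms are sent to empty diagrams. Once that black box is in place the rest of the proof is a formal diagram chase. An alternative approach would bypass the LMO homomorphism entirely by first choosing a symplectic expansion $\theta'$ of $\pi' = \pi_1(\Sigma')$ whose restriction to $\pi \subset \pi'$ is a symplectic expansion $\theta$ of $\pi$ (and which is ``identity-like'' on the added free factor); then van Kampen identifies $\varrho^{\theta'}(M')$ with the extension of $\varrho^{\theta}(M)$ by the identity on the new free generators of $\pi'$, and taking $\log$ would give the claim directly, at the cost of explicitly constructing the compatible pair $(\theta,\theta')$.
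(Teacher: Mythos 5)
Your proof takes essentially the same route as the paper: both reduce the statement to the stabilization property of the LMO homomorphism $Z$, which follows from the monoidal structure of the LMO functor of \cite{chm}, and then conclude via Proposition~\ref{prop:Z^t}. You additionally spell out the naturality of $\eta_*$ with respect to $H \hookrightarrow H'$ (which the paper takes as given) and sketch an alternative via compatible symplectic expansions, but the core argument matches the paper's.
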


\begin{proof}
It is easily seen from its construction that 
the LMO homomorphism $Z$ commutes with stabilizations of the reference  surface. 
Indeed, $Z$ is constructed from the LMO functor, which has the property to preserve some monoidal structures~\cite{chm}. 
Hence the homomorphism $z^t:\Hcyl \to \widehat{H_1}\big(  \widehat{\A}^{t,c}(H)\big)$ 
enjoys the same property.
We conclude thanks to Proposition \ref{prop:Z^t}.
\end{proof}

Here is the second property that we deduce for $\IAbel$.

\begin{cor} \label{cor:fti}
 For any integer $d\geq 1$, the truncation of $\IAbel$ at degree $\leq d$  is a finite-type invariant of degree $\leq d$.
\end{cor}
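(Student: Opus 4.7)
The plan is to pull back the finite-type property from the LMO side via the commutative diagram of Proposition~\ref{prop:Z^t}, which identifies $\IAbel$ with $\eta_* \circ z^t$. The whole argument will rest on the universality of the LMO homomorphism for finite-type invariants of homology cylinders, combined with the fact that $\log$ and $\eta_*$ interact gently with the degree filtration.

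First I would recall the universality statement: the LMO homomorphism $Z: \Hcyl \to \widehat{\A}(H)$ is such that, for any $d\geq 1$, its truncation at degree $\leq d$ is a finite-type invariant of degree $\leq d$ in the Goussarov--Habiro sense. Since the canonical projection $\widehat{\A}(H) \to \widehat{\A}^t(H)$ is degree-preserving, the tree-reduction $Z^t$ inherits the same property: its truncation at degree $\leq d$ is a finite-type invariant of degree $\leq d$.

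Next I would handle the passage to $\log Z^t$. The map $Z^t$ takes values in $\operatorname{GLike}\widehat{\A}^t(H)$, so $\log Z^t = \sum_{n\geq 1} (-1)^{n+1}(Z^t-1)^n/n$, and when truncating at degree $\leq d$ only finitely many summands contribute (those with $n \leq d$). Each such summand is a polynomial in components of $Z^t$ whose degrees sum to at most $d$. Using the standard fact that the product of finite-type invariants of degrees $a$ and $b$ is a finite-type invariant of degree $a+b$ (a property of the Goussarov--Habiro filtration), I conclude that the truncation of $\log Z^t$ at degree $\leq d$ is a finite-type invariant of degree $\leq d$. Finally, the maps $\widehat{\A}^{t,c}(H) \to \widehat{H_1}(\widehat{\A}^{t,c}(H))$ and $\eta_*$ being degree-preserving $\Q$-linear, the truncation of $z^t = \log Z^t$ composed with this projection, and then of $\IAbel = \eta_*\circ z^t$, at degree $\leq d$ remain finite-type invariants of degree $\leq d$.

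The main obstacle, though essentially expository rather than technical, is recording cleanly that the Goussarov--Habiro filtration is compatible with the algebra structure on $\widehat{\A}^t(H)$ in the form needed at the logarithm step; once this is spelled out, the argument is a direct translation along the diagram of Proposition~\ref{prop:Z^t}.
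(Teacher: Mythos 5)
Your proof is correct and follows the same route as the paper: pulling back the finite-type property from the LMO homomorphism along the commutative diagram of Proposition~\ref{prop:Z^t}. The only difference is expository: the paper simply cites the finite-type property of $Z$ and invokes the proposition, while you make explicit the intermediate step showing that the logarithm (applied to the group-like element $Z^t(M)$) respects the Goussarov--Habiro filtration, via the fact that pointwise products of finite-type invariants of degrees $a$ and $b$ are finite-type of degree $a+b$; this is a real step that the paper leaves implicit, and your treatment of it is correct.
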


\begin{proof}
It is known that the truncation of $Z$ at degree $\leq d$  is a finite-type invariant of degree~$\leq d$: see \cite{chm,hm_SJD}. 
Thus the corollary follows directly from Proposition~\ref{prop:Z^t}.
\end{proof}

\begin{remark} \label{rem:claspers}
The proof of Corollary \ref{cor:fti} also shows the following, which refines statement (4) in Theorem \ref{thm:main}.
Let $x$ be any homogeneous element of   $\widehat{H_1} \big( \widehat{\mathfrak{h}}^+ \big)$ of degree $d\geq 1$.
Then, using Habiro's terminology \cite{habiro}, 
there is a non-zero integer $n$ and some  connected ``graph claspers''  $G_1,\dots, G_k \subset \Sigma \times [0,1]$, each having $d$ ``nodes'', such that
$$
\IAbel\big( (\Sigma \times [0,1])_{G_1}\big) + \cdots + \IAbel\big( (\Sigma \times [0,1])_{G_k}\big) = nx
$$
where $(\Sigma \times [0,1])_{G_i}$ denotes the cobordism obtained from the trivial cobordism by ``surgery'' along $G_i$.
This is proved as follows: we consider a linear combination $c$ of tree diagrams such that $\eta(c)$ represents $x$,
we multiply $c$ by an integer to have its coefficients in $\Z$,
and we realize each of these symplectic Jacobi diagrams by a (tree-shaped) graph clasper in  $\Sigma \times [0,1]$. 
Here we are using the universality of $Z$ among finite-type invariants. \hfill $\blacksquare$
\end{remark}

\section{The Magnus representation of the group of homology cobordisms}\label{sec:trace_h_cob}

Let $\Sigma$ be a compact connected oriented surface of genus $g\geq 1$ with one boundary component.
We continue our study of the group $\Hcob = \Hcob(\Sigma)$ of homology cobordisms.

\subsection{The classical Magnus representation} \label{subsec:classical_Magnus}

Let $(R,\varepsilon)$ be a commutative augmented algebra, whose group of invertible elements is denoted by~$R^\times$.
We assume that $\varepsilon^{-1}(1) \subset R^\times$.

For any pair of CW-complexes $(X,Z)$, and any group homomorphism $\psi:H_1(X)\to R^\times$, 
the \emph{homology} of $(X,Z)$ with \emph{$\psi$-twisted} coefficients in $R$ is
$$
H^\psi_*(X,Z  ; R  ) := H_* \big( R \otimes_{\Z[\pi_1(X)]} C(\widetilde{X}, p^{-1}_X(Z))\big)
$$
where $p_X:\widetilde{X}\to X$ is the universal cover of $X$, the cellular decomposition of $(X,Z)$ is lifted to the pair $(\widetilde{X}, p^{-1}_X(Z))$
and $C(\widetilde{X}, p^{-1}_X(Z))$ is the resulting cell chain complex with the structure of left $\Z[\pi_1(X)]$-module given by deck transformations.
Note that $H^\psi_*(X,Z;R)$ has a structure of left $R$-module.

\begin{lemma} \label{lem:KLW}
Let $(X,Y,Z)$ be a triple of CW-complexes such that $i_*:H_*(Y;\Z) \to  H_*(X;\Z)$ is  an isomorphism,  where $i:Y \to X$ is the inclusion map. 
Let $\psi: H_1(X) \to R^\times$ be a group homomorphism whose image is contained in $\varepsilon^{-1}(1)$. Then the $R$-linear map
$$
j_*: H_*^{\psi\circ i_*} (Y,Z;R) \longrightarrow H_*^{\psi} (X,Z;R),
$$
which is induced by the inclusion $j:(Y,Z)\to (X,Z)$,  is an isomorphism.
\end{lemma}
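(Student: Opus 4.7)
The plan is to use a Nakayama-type argument. First I would invoke the long exact sequence of the triple $(X,Y,Z)$ in $\psi$-twisted homology to reduce the claim to proving $H_*^\psi(X,Y;R) = 0$, where the twisted homology of $(X,Y)$ is defined via the complex $C(\widetilde X, p_X^{-1}(Y))$ of free left $\Z[\pi_1(X)]$-modules. Setting $\pi:=\pi_1(X)$, this amounts to showing that the bounded complex of free right $R$-modules
$$
D_* := R \otimes^\psi_{\Z[\pi]} C(\widetilde X, p_X^{-1}(Y))
$$
is acyclic.

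The crucial observation is that the hypothesis $\psi(H_1(X)) \subset \varepsilon^{-1}(1) = 1 + I$, where $I := \ker \varepsilon$, implies $\varepsilon \circ \psi = 1$, so the $\Z[\pi]$-action on $R/I$ through $\psi$ is trivial. Consequently
$$
D_* / I D_* \;\cong\; C_*(X, Y; \Q)
$$
is the \emph{untwisted} relative chain complex of $(X,Y)$ with coefficients in the ground field $\Q$, and this is acyclic because $i_*: H_*(Y;\Z) \to H_*(X;\Z)$ is assumed to be an isomorphism.

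It remains to upgrade this to the acyclicity of $D_*$ itself. The condition $\varepsilon^{-1}(1) \subset R^\times$ is precisely the statement that $I$ lies in the Jacobson radical $J(R)$ of $R$. Since the topological applications have $X$ and $Y$ finite CW-complexes, $D_*$ is a bounded complex of finitely-generated free $R$-modules. Over the field $R/I = \Q$ the acyclic complex $D_*/ID_*$ is contractible; pick such a chain contraction $\bar s$ and lift it degreewise to a degree-$+1$ map $s$ of $D_*$ using freeness. One then checks that $ds+sd = \Id_{D_*} + \rho$ for a chain endomorphism $\rho$ of $D_*$ whose matrix representatives have entries in $I$. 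Because $\det(\Id + M)$ is a unit whenever $M\in M_n(R)$ has entries in $J(R)$, the chain map $\Id+\rho$ is an automorphism of $D_*$; setting $s':= (\Id+\rho)^{-1} s$ and using that $\Id+\rho$ (hence its inverse) is a chain map yields $ds' + s'd = \Id_{D_*}$, i.e.\ a genuine chain contraction of $D_*$. This forces $H_*(D_*) = 0$.

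The main obstacle is the final Nakayama-for-complexes step; it relies both on the Jacobson-radical property of $I$ and on the finiteness of the CW structure, so that $\rho$ may be represented, degree by degree, by a matrix with entries in $I$. Once these two ingredients are in place, the passage from the mod-$I$ contraction to a genuine contraction of $D_*$ is purely formal, and the preceding steps (the triple long exact sequence and the identification $D_*/ID_* \cong C_*(X,Y;\Q)$) are routine.
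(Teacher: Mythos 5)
The paper offers no proof of this lemma: it simply refers to \cite[Proposition~2.1]{klw} and says the result ``can be proved with similar arguments.'' Your proposal therefore fills in an argument rather than re-deriving a written one, and the argument you give (Nakayama for bounded complexes of free modules, applied to the reduction mod $I$) is sound and is indeed the standard way to prove this kind of statement. Two remarks. First, the reduction via the long exact sequence of the triple silently uses the induction isomorphism
\[
H_*^{\psi\circ i_*}(Y,Z;R)\;\cong\;H_*\!\left(R\otimes_{\Z[\pi_1(X)]}C\!\left(p_X^{-1}(Y),p_X^{-1}(Z)\right)\right),
\]
coming from $C(p_X^{-1}(Y),p_X^{-1}(Z))\cong\Z[\pi_1(X)]\otimes_{\Z[\pi_1(Y)]}C(\widetilde{Y},p_Y^{-1}(Z))$; this is routine, but it is exactly what makes the map $j_*$ in the statement into the connecting map of the triple, so it should be said explicitly. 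Second, as you yourself flag, the lift-the-contraction step requires $D_*$ to be a bounded complex of finitely generated free $R$-modules; the lemma as printed carries no finiteness hypothesis, but in every use the paper makes of it, $(X,Y,Z)$ is a triple of compact manifolds with $\star$, so the hypothesis holds. (For the infinite case one would need completeness of $R$ and an inverse-limit variant.) Your identification of $\varepsilon^{-1}(1)\subset R^\times$ with $I\subset J(R)$ is exactly the right lens for why $\Id+\rho$ is invertible. In short: correct, with a small identification that deserves a sentence, and a finiteness caveat that you already noticed and that is harmless here.
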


\begin{proof}
The lemma is  a variation of \cite[Proposition 2.1]{klw} and it can be proved with similar arguments.
\end{proof}

We now come back to the surface $\Sigma$ and  we fix a group homomorphism $\varphi: H_1(\Sigma;\Z) \to R^\times$ whose image is contained in $\varepsilon^{-1}(1)$. 
Let $\cob^\varphi := \cob^\varphi( \Sigma )$ be the submonoid of $\cob(\Sigma)$ 
consisting of those cobordisms $M$ such that $\varphi\circ (m_{+,*})^{-1} =  \varphi\circ (m_{-,*})^{-1}: {H_1(M;\Z) \to R^\times}$.
The image of $\cob^\varphi$ by the canonical projection $\cob \to \Hcob$ is denoted by $\Hcob^\varphi$.

For any $M \in \cob^\varphi$, the fact that $m_{\pm,*}: H_1(\Sigma;\Z) \to H_1(M;\Z)$ is an isomorphism implies by Lemma \ref{lem:KLW} that the $R$-linear map  
$$ 
n_{\pm,*}:H_1^\varphi(\Sigma,\star\, ;R) \longrightarrow H_1^{\varphi\circ (m_{\pm,*})^{-1}}(M,\star\, ;R)
$$ 
induced by the inclusion $n_\pm: (\Sigma,\star) \to (M,\star)$ is an isomorphism. 
Therefore we can consider the $R$-linear automorphism 
$
r^\varphi(M):=(n_{-,*})^{-1}\circ n_{+,*} 
$
of $H_1^\varphi(\Sigma,\star\, ;R)$,  which only depends on the $4$-dimensional homology cobordism class of~$M$. 
Thus we get a group homomorphism
$$
r^\varphi: \Hcob^\varphi \longrightarrow \Aut_R\big( H_1^\varphi(\Sigma,\star\, ;R)\big),
$$
which we call the \emph{Magnus representation}.
See \cite{sakasai_survey} for a survey of this invariant of homology cobordisms.

In the sequel, we are interested in the following case: $R:=\widehat{S}(H)$ is 
the degree-completion of the symmetric algebra $S(H)$ generated by $H$ in degree $1$,
and $\varphi: H_1(\Sigma;\Z) \to \widehat{S}(H)^\times$  is  the \emph{exponential} map defined by
$$
\varphi(h) := \exp(h) = \sum_{k=0}^\infty \frac{h^k}{k!}
$$
for any $h\in H_1(\Sigma;\Z)$.
Since $\varphi$ is injective in this case, $\Hcob^\varphi$ coincides with the group of homology cylinders $\Hcyl$.
Then the Magnus representation is  a group homomorphism 
$$
r: \Hcyl \longrightarrow \Aut_{\widehat{S}} \big( H_1\big(\Sigma,\star\, ; \widehat{S}\big)\big)
$$
where $\widehat{S}:= \widehat{S}(H)$ and  $ H_1(\Sigma,\star\, ;\widehat{S})$ is 
the first homology group of $(\Sigma,\star)$ with twisted coefficients in $\widehat{S}$.

\subsection{The traces correspondence}

We now relate the determinant of the Magnus representation of $\Hcyl$ to the abelianization map. 

\begin{theorem} \label{thm:Magnus_Morita}
The following diagram is commutative: 
$$
\xymatrix  @!0 @R=0.8cm @C=5cm  {
& \widehat{H_1} (\widehat{\mathfrak{h}}^+) \ar[r]^-{\ITrace} & \prod_{k=1}^\infty {S}^k(H) \ar[dd]^-\exp \\ 
\Hcyl \ar[ru]^-{\IAbel} \ar[rd]_-{r}  &  & \\
& \Aut_{\widehat{S}} \big( H_1\big(\Sigma,\star\, ; \widehat{S}\big)\big) \ar[r]^-\det &  \widehat{S}(H) 
}
$$
\end{theorem}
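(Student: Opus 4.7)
The strategy is to deduce the theorem from Lemma \ref{lem:traces} applied to the characteristic ideal $\mathfrak{R} := \freeLie_{\geq 2}$, combined with a direct identification of the topological Magnus representation $r$ with its algebraic counterpart via a symplectic expansion. For $\mathfrak{R} = \freeLie_{\geq 2}$, one has $\widehat{T}^{\mathfrak{R}} = \widehat{S}(H)$ and $\widehat{C^{\mathfrak{R}}}(H) = \widehat{S}(H)$, while $\Aut^{\mathfrak{R}}(\widehat{\freeLie}) = \IAut(\widehat{\freeLie})$, $\Abel^{\mathfrak{R}} = \IAbel$ (in its algebraic incarnation from Section~\ref{subsec:abelianization}), and $\Trace^{\mathfrak{R}} = \ITrace$. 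Hence Lemma \ref{lem:traces} becomes the identity
\[
\ITrace \circ \IAbel \;=\; \ldet \circ \IMagnus \quad \text{on }\; \IAut(\widehat{\freeLie}).
\]
Fixing a symplectic expansion $\theta$ and precomposing with the group homomorphism $\varrho^\theta : \Hcyl \to \IAut_\omega(\widehat{\freeLie})$, and using that the topological $\IAbel$ of Section~\ref{subsec:abel} satisfies $\IAbel(M) = \log \varrho^\theta(M)$ for $M \in \Hcyl$, yields
\[
\ITrace \circ \IAbel \;=\; \ldet \circ (\IMagnus \circ \varrho^\theta) \quad \text{on }\; \Hcyl.
\]

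The central task is then to show that $\IMagnus \circ \varrho^\theta = r$ under a natural identification of the target modules. Since $\Sigma$ has the homotopy type of a wedge of $2g$ circles based at $\star$, the twisted homology $H_1(\Sigma, \star; \widehat{S})$ is canonically isomorphic to $\widehat{S} \otimes_{\Q[\pi]} I_\pi$, where $I_\pi$ is the augmentation ideal of $\Q[\pi]$ regarded as a right $\Q[\pi]$-module via the exponential $\varphi$. The isomorphism $\widehat{\theta} : \widehat{\Q[\pi]} \xrightarrow{\simeq} \widehat{T}(H)$ carries $\widehat{I}_\pi$ isomorphically onto $\widehat{T}_{\geq 1}$, sending the basis $(\gamma_i - 1)_i$ to $(\theta(\gamma_i) - 1)_i$, and identifies the tensor product above with the free $\widehat{S}$-module $\widehat{T}_{\geq 1} \otimes_{\widehat{T}} \widehat{S}$ on which $\IMagnus$ takes values. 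For $M \in \Hcyl$, the filtration-preserving algebra automorphism $\varrho(M)$ of $\widehat{\Q[\pi]}$ induces (by the same argument as Lemma \ref{lem:Magnus}) an automorphism of $\widehat{I}_\pi$ as a right $\widehat{\Q[\pi]}$-module, whose tensorization with $\widehat{S}$ along $\varphi$ agrees with $r(M)$; after transport through $\widehat{\theta}$, this automorphism becomes exactly $\IMagnus(\varrho^\theta(M))$. The verification parallels the compatibility \eqref{eq:Magnus_to_Magnus} established in the proof of Proposition \ref{prop:ext_satoh}, with the caveat that $\varrho(M)$ is generally not induced by a group automorphism of $\pi$.

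To conclude, since the algebra $\widehat{S}(H)$ is commutative, the noncommutative log-determinant of Appendix \ref{sec:cyclic_things} reduces to the familiar identity $\ldet = \log \circ \det$ for $\widehat{S}$-linear automorphisms close to the identity. Applying $\exp$ to the second displayed equation and substituting the identification $\IMagnus \circ \varrho^\theta = r$ gives
\[
\exp \circ \ITrace \circ \IAbel \;=\; \exp \circ \ldet \circ r \;=\; \det \circ r,
\]
which is the asserted commutativity. The main obstacle is the identification of $\IMagnus \circ \varrho^\theta$ with $r$: it requires one to match the homologically defined invariant $r(M)$, for a general homology cylinder $M$, with the purely algebraic Jacobian formula \eqref{eq:Jac} evaluated on the $\widehat{\Q[\pi]}$-automorphism induced by $M$. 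Once this comparison is in place, the theorem follows directly from Lemma \ref{lem:traces} and the commutativity of $\widehat{S}(H)$.
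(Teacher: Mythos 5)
Your strategy is essentially the paper's: apply Lemma~\ref{lem:traces} with $\mathfrak{R}=\freeLie_{\geq 2}$, conjugate the algebraic Magnus representation $\IMagnus\circ\varrho^\theta$ into the topological one $r$ via a symplectic expansion, and finish with Lemma~\ref{lem:usual_logdet}. The step you flag as ``the main obstacle'' is, indeed, the entire content of the paper's proof, and your sketch of it has two gaps that would need to be filled.

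First, the module-side discrepancy: the twisted homology $H_1(\Sigma,\star;\widehat{S})$ arises naturally as $\widehat{S}\otimes_{\Q[\pi]} I$, a \emph{left} $\widehat{S}$-module, while $\IMagnus$ acts on $\widehat{T}_{\geq 1}\otimes_{\widehat{T}}\widehat{S}$, a \emph{right} $\widehat{S}$-module; your description says $\widehat\theta$ ``carries $\widehat I_\pi$ onto $\widehat T_{\geq 1}$ and identifies the tensor product,'' but that cannot be a one-step identification. The paper inserts an isomorphism that flips the two tensor factors and applies the antipodes of the respective Hopf algebras, precisely to exchange the left and right $\widehat S$-structures; this is the third map in \eqref{eq:four_isos}. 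Second, you rightly note that $\varrho(M)$ is not a group automorphism of $\pi$, so the commutativity cannot be ``parallel to \eqref{eq:Magnus_to_Magnus}'' as you suggest. The paper instead proves \eqref{eq:psi's} by inserting the intermediate row $H_1(M,\star;\widehat S)$ with all the analogous isomorphisms for the pair $(M,\star)$, comparing $n_+$ and $n_-$ separately; this is where Lemma~\ref{lem:KLW} (the twisted-homology invariance under homology equivalences) enters. Once these two points are supplied, your argument agrees with the paper's.
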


\begin{proof} 
We first relate the Magnus representation $r$ of $\Hcyl$ 
to the Magnus representation $\IMagnus$ of $\IAut(\widehat{\freeLie})$ introduced in Section \ref{subsec:Magnus}.
For this, we choose a symplectic expansion $\theta$ of $\pi$. 
We claim that there is an automorphism of $\widehat{S}$-modules
$$
\psi^\theta: H_1\big(\Sigma,\star\, ; \widehat{S}\big) \stackrel{\simeq}{\longrightarrow} \widehat{T}_{\geq 1} \otimes_{\widehat{T}} \widehat{S}
$$
depending on $\theta$, and such that the following diagram commutes for any $M\in \Hcyl$: 
\begin{equation} \label{eq:psi's}
\xymatrix @!0 @R=1.3cm @C=2cm{
H_1\big(\Sigma,\star\, ; \widehat{S}\big) \ar[d]_-{\psi^\theta}^-{\simeq}  \ar[rr]^-{r(M)} && H_1\big(\Sigma,\star\, ; \widehat{S}\big)  \ar[d]^-{\psi^\theta}_-{\simeq} \\
 \widehat{T}_{\geq 1} \otimes_{\widehat{T}} \widehat{S} \ar[rr]_-{\IMagnus (\varrho^\theta(M) ) } &&  \widehat{T}_{\geq 1} \otimes_{\widehat{T}} \widehat{S}
}
\end{equation}
Specifically, we shall define $\psi^\theta$ as a composition of several isomorphisms 
\begin{equation} \label{eq:four_isos}
H_1\big(\Sigma,\star\, ; \widehat{S}\big) \simeq \widehat{S} \otimes_{\Q[\pi]} I \simeq \widehat{S} \otimes_{\widehat{\Q[\pi]}}\widehat{I} 
\simeq  \widehat{I} \otimes_{\widehat{\Q[\pi]}} \widehat{S}  \simeq  \widehat{T}_{\geq 1} \otimes_{\widehat{T}} \widehat{S}
\end{equation}
where $I$ denotes the augmentation ideal of the group algebra  $\Q[\pi]$. 

 Consider a cell decomposition of $\Sigma$ with  $\star$ as a unique $0$-cell. 
 The first isomorphism in \eqref{eq:four_isos} is the  $\widehat{S}$-linear map 
$H_1\big(\Sigma,\star\, ; \widehat{S}\big) \to \widehat{S} \otimes_{\Q[\pi]} {I}$ defined by 
\begin{equation}   \label{eq:natural_map}  
\Big[ \sum_{j \in J} r_j \otimes e_j \Big] \longmapsto \sum_{j \in J} r_j \otimes \big(e_j(1)-e_j(0)\big)
\end{equation}
where we use the following notations: $J$ is a finite set; $r_j \in \widehat{S}$ and 
$e_j \subset \widetilde{\Sigma}$ is a lift of an oriented $1$-cell of $\Sigma$ for all $j\in J$; 
$e_j(1), e_j(0) \in \pi$ are such that $\partial_1(e_j) =  (e_j(1)-e_j(0))\, \widetilde{\star}$ where $\widetilde{\star}$ 
denotes the preferred lift of $\star$ to $\widetilde{\Sigma}$.
(That the natural map \eqref{eq:natural_map} is an isomorphism follows from the fact that $\Sigma$  
deformation retracts to a wedge of circles based at~$\star$.)
The second isomorphism in \eqref{eq:four_isos} is the $\widehat{S}$-linear map induced by the inclusion $I \hookrightarrow \widehat{I}$.
(It is an isomorphism since, given a basis $(\gamma_1,\dots,\gamma_{2g})$ of $\pi$, 
the $\Q[\pi]$-basis  $(\gamma_1 -1 ,\dots,\gamma_{2g}-1)$ of $I$ maps to a family of elements of $\widehat{I}$,
which the Magnus expansion shows to be a $\widehat{\Q[\pi]}$-basis of~$\widehat{I}$.)
The third isomorphism flips the two factors of the tensor product and applies the antipodes to each of them.
(Note that it swaps the left $\widehat{S}$-module structure and the right $\widehat{S}$-module structure.)
Finally, the fourth isomorphism in \eqref{eq:four_isos}  is induced by the isomorphism $\theta: \widehat{I} \to \widehat{T}_{\geq 1}$.
(That it is well-defined follows from the fact that the expansion $\theta$ has group-like values.)     

We now show that the diagram \eqref{eq:psi's} is commutative for any $M\in \Hcyl$.
This diagram can be decomposed as follows:
$$
\xymatrix @!0 @R=1.3cm @C=2.5cm {
H_1\big(\Sigma,\star\, ; \widehat{S}\big) \ar[d]_-{n_{+,\star}}^-\simeq  \ar[r]^-\simeq & 
\widehat{S} \otimes_{\Q[\pi]} I \ar[d] \ar[r]^-\simeq & 
\widehat{S} \otimes_{\widehat{\Q[\pi]}}\widehat{I} \ar[r]^-\simeq \ar[d]_-\simeq  & 
\widehat{I} \otimes_{\widehat{\Q[\pi]}}  \widehat{S}  \ar[r]^-\simeq   \ar[d]_-\simeq & 
\widehat{T}_{\geq 1} \otimes_{\widehat{T}} \widehat{S} \ar[dd]_-\simeq^-{\IMagnus (\varrho^\theta(M))} \\
H_1(M ,\star\, ; \widehat{S})   \ar[r] &  \widehat{S} \otimes_{\Q[\pi']} I' \ar[r] & 
\widehat{S} \otimes_{\widehat{\Q[\pi']}}\widehat{I'} \ar[r]^-\simeq & 
\widehat{I'} \otimes_{\widehat{\Q[\pi']}}  \widehat{S}  & \\
H_1\big(\Sigma,\star\, ; \widehat{S}\big) \ar[u]^-{n_{-,\star}}_-\simeq  \ar[r]^-\simeq & 
\widehat{S} \otimes_{\Q[\pi]} I \ar[r]^-\simeq \ar[u]   & 
\widehat{S} \otimes_{\widehat{\Q[\pi]}}\widehat{I} \ar[r]^-\simeq   \ar[u]^-\simeq & 
\widehat{I} \otimes_{\widehat{\Q[\pi]}}  \widehat{S}  \ar[r]^-\simeq \ar[u]^-\simeq & 
\widehat{T}_{\geq 1} \otimes_{\widehat{T}} \widehat{S}}
$$
Here  $\pi':= \pi_1(M,\star)$ and $I'$ is the augmentation ideal of $\Q[\pi']$; the arrows of the second row are defined for the pair $(M,\star)$
as we did for the pair $(\Sigma,\star)$;  the downward vertical arrows (apart from the last one) are induced by $m_+: \Sigma \to M$,
and  the upward vertical arrows are induced by $m_-: \Sigma \to M$. 
Clearly, each cell of this diagram is commutative. 
Since the natural map $H_1(M,\star\, ; \widehat{S}) \to \widehat{S} \otimes_{\Q[\pi']} {I'}$ is surjective, 
we deduce that \emph{all} the maps of this diagram are isomorphisms. We conclude that  \eqref{eq:psi's} is commutative.

Finally, we apply Lemma \ref{lem:traces} to $\mathfrak{R} := \freeLie_{\geq 2}$ to get  the following commutative diagram: 
$$
\xymatrix{
\IAut_\omega(\widehat \freeLie) \ar[rr]^-{\IAbel} \ar[d]_-{\IMagnus}&& 
 \widehat{H_1} (\widehat{\mathfrak{h}}^+) \ar[d]^-{\ITrace}\\
 \IAut_{ \widehat{S}}\big(\widehat{T}_{\geq 1} {\otimes}_{\widehat T} \widehat{S}\big) \ar[rr]_-{\ldet } &  & \widehat{S}(H)
}
$$
By Lemma \ref{lem:usual_logdet}, we have $\det = \exp \circ\, \ldet$. 
We conclude that, for any $M\in \Hcyl$, 
\begin{eqnarray*}
\det(r(M)) & \by{eq:psi's} & \det\big(\IMagnus (\varrho^\theta(M) \big) \big) \\
& =&  \exp \big( \ldet \big(\IMagnus (\varrho^\theta(M) \big) \big) \big)\\
& =&  \exp \big( \ITrace \big(\IAbel(\varrho^\theta(M) \big) \big) \big) \ = \ \exp \big( \ITrace \big(\IAbel(M) \big) \big). 
\end{eqnarray*}

\up
\end{proof}

\subsection{The abelianization map on the Torelli group}

We now study the  group homomorphism
\begin{equation}  \label{eq:IAbel_Torelli}
\IAbel: \Torelli  \longrightarrow \widehat{H_1} (\widehat{\mathfrak{h}}^+)
\end{equation}
using, as a first step, Theorem \ref{thm:Magnus_Morita}. 
In fact, the strategy that we employed to prove it may be regarded as a generalization of Morita's approach in~\cite{morita},
when he proves that his trace map $\ITrace_k: {\mathfrak{h}_k  \to S^k(H)}$ 
vanishes on the image of the $k$-th Johnson homomorphism $\tau_k :  \mcg[k]  \to \mathfrak{h}_k$  for any   $k>1$  \cite[Theorem 6.11]{morita}. 
This vanishing phenomenon is generalized by the next proposition. 

\begin{proposition} \label{prop:ITr_IAb_Torelli}
For any $f\in \Torelli$, we have 
$$
\ITrace\big(\IAbel(f)\big) = \ITrace_1 \big(\tau_1(f)\big) = \det(f_*) 
$$
where $f_*$ denotes the automorphism induced by $f$ at the level of the twisted first homology group 
$H_1\big(\Sigma,\star\, ; \Z[H_1(\Sigma;\Z)]\big)$.
In particular, $\ITrace\big(\IAbel(f)\big) \in H_1(\Sigma;\Z) \subset \widehat{S}(H)$.
\end{proposition}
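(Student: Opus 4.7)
The plan is to combine Theorem~\ref{thm:Magnus_Morita} with the classical theory of the Magnus representation of the Torelli group. Since $f \in \Torelli \subset \Hcyl$, Theorem~\ref{thm:Magnus_Morita} yields
$$
\exp\bigl(\ITrace(\IAbel(f))\bigr) \;=\; \det\bigl(r(f)\bigr) \quad \in\; \widehat{S}(H).
$$
My first step would be to show that, when $f$ comes from the mapping class group, the $\widehat{S}$-linear automorphism $r(f)$ is obtained by base change from the classical Magnus matrix $f_*$ over $\Z[H^\Z]$. More precisely, the Dehn--Nielsen action of $f$ on $\pi$ induces an automorphism $f_*$ of $H_1(\Sigma,\star\, ; \Z[H^\Z])$, and if $\exp_* : \Z[H^\Z] \to \widehat{S}(H)$ denotes the $\Q$-algebra homomorphism sending $h \in H^\Z$ to $\exp(h)$, then $r(f) = f_* \otimes_{\exp_*} \Id$. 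This identification is verified by unwinding the construction of the isomorphism $\psi^\theta$ inside the proof of Theorem~\ref{thm:Magnus_Morita} and using the fact that $\theta$ is a group-like expansion. Consequently $\det(r(f)) = \exp_*\!\bigl(\det(f_*)\bigr)$.

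Next, since $H^\Z$ is free abelian, the unit group of the Laurent-polynomial ring $\Z[H^\Z]$ is $\{\pm h : h\in H^\Z\}$; thus $\det(f_*) = \epsilon\,h$ for some sign $\epsilon\in\{\pm 1\}$ and some $h\in H^\Z$, whence $\det(r(f)) = \epsilon\,\exp(h)$ in $\widehat{S}(H)$. On the other hand, $\exp\bigl(\ITrace(\IAbel(f))\bigr)$ lies in $1+\widehat{S}(H)_{\geq 1}$ because $\IAbel(f)\in \widehat{\mathfrak{h}}^+$ has no degree~$0$ component. Comparing these two expressions forces $\epsilon=+1$, so that
$$
\ITrace(\IAbel(f)) \;=\; h \;=\; \det(f_*) \;\in\; H^\Z\subset H \subset \widehat{S}(H).
$$
This already establishes the ``In particular'' clause and the second claimed equality.

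Finally, to recognise this element as $\ITrace_1(\tau_1(f))$, I would use that $\sigma(f)=\Id_H$ for $f\in\Torelli$, so that $\IAbel(f)=\log\varrho^\theta(f)$, whose leading term is $\tau_1(f)\in\mathfrak{h}_1$ by the very definition of the first Johnson homomorphism. As $\ITrace$ is degree-preserving, the degree~$1$ component of $\ITrace(\IAbel(f))$ is $\ITrace_1(\tau_1(f))$; combined with the vanishing of all higher-degree components established above, this gives the first equality. The main obstacle will be the identification step: one has to check carefully that, for a genuine surface homeomorphism $f$, the rather abstract isomorphism $\psi^\theta$ built from the symplectic expansion is compatible with the natural base-change $\exp_*: \Z[H^\Z] \to \widehat{S}(H)$ relating the classical Magnus representation to the ``exponential'' twisted homology underlying $r$.
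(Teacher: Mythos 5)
Your proposal is correct and follows essentially the paper's route: apply Theorem~\ref{thm:Magnus_Morita}, identify $r(f)$ as the base change of the classical Magnus matrix $f_*$ along $\exp_*:\Z[H^\Z]\to\widehat{S}$, and invoke $\Z[H^\Z]^\times=\pm H^\Z$. The only cosmetic difference is that you pin down the sign $\epsilon=+1$ by comparing constant terms (since $\IAbel(f)\in\widehat{\mathfrak{h}}^+$ has no degree-$0$ part), whereas the paper appeals to the fact that a symplectic matrix has determinant one; both are valid, and your explicit handling of the equality with $\ITrace_1(\tau_1(f))$ via degree preservation is exactly what the paper leaves implicit.
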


\begin{proof}
Set $H^\Z:= H_1(\Sigma;\Z)$ and denote by $R:= \Z[H^\Z]$ its group ring. 
Let $f \in \Torelli$. Then we have
$$
r(f) =  \Id_{\widehat{S}}  \otimes_{R } f_* 
\in \Aut_{\widehat{S}} \big( H_1\big(\Sigma,\star\, ; \widehat{S}\big)\big)
$$
where $r$ is the  Magnus representation as defined at the end of Section \ref{subsec:classical_Magnus},
$\widehat{S}$ is regarded as a right $R$-module via the exponential map $H^\Z \to \widehat{S}$ 
and we identify
$$
H_1\big(\Sigma,\star\, ; \widehat{S}\big) \simeq  \widehat{S} \otimes_{R } 
H_1\big(\Sigma,\star\, ;  R \big).
$$
Since $R^\times = \pm H^\Z$ and since a symplectic matrix has determinant one, 
we have $\det(f_*)=h$ for some $h\in  H^\Z$. It follows that
$$
\det\big( r(f) \big) =  \exp(h)
$$
and we deduce from Theorem \ref{thm:Magnus_Morita} that $\exp\big( \ITrace (\IAbel(f)) \big)=  \exp(h)$.
So $\ITrace (\IAbel(f)) =h$.
\end{proof}

\begin{remark}  \label{rem:ES}
Morita's result that $\ITrace_k \circ \tau_k: \mcg[k]  \to S^k(H)$ is zero (for any ${k\geq 2}$) can also be generalized in the following, different direction:
the composition ${\Trace_k \circ \tau_k}:  {\mcg[k]  \to C_k(H)}$ is zero for any $k\geq 2$.
This result has been first observed by Enomoto and Satoh \cite{es}, 
by combining Satoh's work \cite{satoh} to Hain's result that the rational graded Lie algebra associated to the Johnson filtration is generated by its degree one part \cite{hain97}.
In our setting, that ${\Trace_k \circ \tau_k=0}$ 
directly follows from Proposition \ref{prop:ext_satoh} which is logically independent from the results of \cite{satoh} and \cite{hain97}. \hfill $\blacksquare$
\end{remark}

To go further in our study of the homomorphism \eqref{eq:IAbel_Torelli}, 
we observe that it induces an $\Sp$-equivariant homomorphism 
$\IAbel: H_1(\Torelli;\Q)  \to \widehat{H_1} (\widehat{\mathfrak{h}}^+)$ 
where $H_1(\Torelli;\Q)=\Torelli/[\Torelli,\Torelli] \otimes_\Z \Q $
has the $\Sp(H)$-action induced by the conjugacy action of $\mcg$ on $\Torelli$. 
(This follows from the fact that \eqref{eq:IAbel_Torelli} is $\mcg$-equivariant using \cite[Lemma~2.2.8]{AN}.) 
For any $k\geq 1$, let
$$
\IAbel_k:  H_1(\Torelli;\Q)  \longrightarrow {H_1}({\mathfrak{h}}^+)_k
$$
be the $\Sp$-equivariant  homomorphism 
obtained from $\IAbel$ by projection onto the degree $k$ part of~$\widehat{H_1} (\widehat{\mathfrak{h}}^+)$.

\begin{proposition} \label{prop:t}
The homomorphism $\IAbel_k$ has the following~properties:
\begin{itemize}
\item[(i)] if $g=1$, $\IAbel_2$ is an isomorphism and $\IAbel_k=0$ for all $k\neq 2$;
\item[(ii)] if $g \geq 3$, $\IAbel_1$ is an isomorphism and $\IAbel_{2k}=0$ for all $k \ge 1$.
\end{itemize}
\end{proposition}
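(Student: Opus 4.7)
The plan is to prove the two statements separately, treating $g=1$ by an explicit computation on the single generator of $\Torelli(\Sigma_{1,1})$, and handling $g\geq 3$ by combining Johnson's theorem with a parity argument based on the scalar $-I\in\Sp(H;\Z)$.

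For (i) with $g=1$, I would first identify $\Torelli(\Sigma_{1,1})$ as the infinite cyclic group generated by the Dehn twist $t_\partial$ about the boundary; this follows from the classical surjection $\mcg(\Sigma_{1,1})\twoheadrightarrow\SL_2(\Z)$ whose kernel is $\langle t_\partial\rangle$. Since $t_\partial$ acts on $\pi$ by conjugation by $\zeta=[\partial\Sigma]$ and $\theta(\zeta)=\exp(-\omega)$, the action on the Malcev completion gives $\varrho^\theta(t_\partial)=\operatorname{Ad}(\exp(-\omega))$, hence
\[
\log\varrho^\theta(t_\partial) \ = \ -\operatorname{ad}(\omega) \ \in \ \mathfrak{h}_2,
\]
a derivation concentrated in degree $2$. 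This immediately yields $\IAbel_k(t_\partial)=0$ for $k\neq 2$. To conclude that $\IAbel_2$ is an isomorphism, I would verify by direct computation that $H_1(\mathfrak{h}^+)_2=\mathfrak{h}_2$ for $\dim H=2$ (because $\mathfrak{h}_1=\wedge^3 H=0$) is one-dimensional and generated by the class of $\operatorname{ad}(\omega)$.

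For (ii) with $g\geq 3$, the first statement follows from Johnson's theorem: the degree-$1$ part of $\log\varrho^\theta(f)$ for $f\in\Torelli$ is $\tau_1(f)$, and Johnson proved that $\tau_1$ induces an $\Sp$-equivariant isomorphism $H_1(\Torelli;\Q)\xrightarrow{\sim}\wedge^3 H=\mathfrak{h}_1=H_1(\mathfrak{h}^+)_1$. For the vanishing $\IAbel_{2k}=0$ with $k\geq 1$, the key is the following parity argument. Choose any lift $\iota\in\mcg$ of $-I\in\Sp(H;\Z)$ under $\sigma$. Conjugation by $\iota$ acts on $H_1(\Torelli;\Q)\cong\wedge^3 H$ by $(-1)^3=-1$, while the element $\sigma(\iota)=-I$ acts on $H_1(\mathfrak{h}^+)_k\subset H^*\otimes\freeLie_{k+1}$ by $(-1)\cdot(-1)^{k+1}=(-1)^k$. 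The $\Hcob$-equivariance of $\IAbel$ from Theorem~\ref{thm:main}(3), combined with the fact that $\IAbel$ factors through $H_1(\Torelli;\Q)$, then yields for every class $\xi\in H_1(\Torelli;\Q)$ the identity
\[
(-1)^k\,\IAbel_k(\xi) \ = \ \IAbel_k(-\xi) \ = \ -\IAbel_k(\xi),
\]
which forces $\IAbel_k=0$ whenever $k$ is even.

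The main obstacle I foresee lies entirely in case (i): verifying that $\mathfrak{h}_2$ is one-dimensional for $g=1$ and that $\operatorname{ad}(\omega)$ represents a nonzero class. This requires either an explicit computation in $\freeLie(H)$ for $\dim H=2$ (using $\dim\freeLie_3=2$ and $\dim\freeLie_4=3$, so that the $\omega$-invariance condition cuts $\Hom(H,\freeLie_3)$ down to a one-dimensional subspace), or a direct appeal to known $\Sp(2)$-module decompositions of $\mathfrak{h}^+$ in low degrees. By contrast, the parity calculation for $g\geq 3$ is essentially formal and applies uniformly to all even $k\geq 2$.
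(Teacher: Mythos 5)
Your proof is correct, and for part (i) and the isomorphism in (ii) it matches the paper's argument: the boundary twist $T$ has $\log\varrho^\theta(T)=-\operatorname{ad}(\omega)$ concentrated in degree $2$, $\mathfrak{h}_2$ is one-dimensional for $g=1$, and Johnson's theorem handles $\IAbel_1$ for $g\geq 3$. Where you diverge is the vanishing of $\IAbel_{2k}$ for $g\geq 3$. The paper argues via the classification of irreducible $\Sp(H)$-modules by Young diagrams with at most $g$ rows: the components of $\mathfrak{h}_{2k}\subset H^{\otimes(2k+2)}$ have an even number of boxes while $H_1(\Torelli;\Q)\simeq\wedge^3 H$ decomposes as $[1]\oplus[1,1,1]$ (odd boxes), so the two modules share no irreducible factor and every $\Sp$-map between them is zero. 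You instead use only the center of $\Sp$: lift $-I\in\Sp(H;\Z)$ to $\iota\in\mcg$, note that $-I$ acts by $(-1)^3=-1$ on $\wedge^3 H\simeq H_1(\Torelli;\Q)$ and by $(-1)^k$ on $\Hom(H,\freeLie_{k+1})\supset\mathfrak{h}_k$, and read off $-\IAbel_k(\xi)=(-1)^k\IAbel_k(\xi)$ from the $\Hcob$-equivariance of Theorem~\ref{thm:main}(3). This is essentially the same parity phenomenon --- the quantity $(-1)^{\text{number of boxes}}$ is precisely how the scalar $-I$ acts on an irreducible --- but your formulation avoids invoking the Young-diagram classification and the explicit decomposition of $\wedge^3 H$. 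Both routes rely on Johnson's theorem that $H_1(\Torelli;\Q)\simeq\wedge^3 H$ for $g\geq 3$, and both are valid; your argument is a bit more elementary and, as you note, uniform in $k$.
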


\begin{proof}
Let $T$ be the (right-handed) Dehn twist around a curve parallel to $\partial \Sigma$, and consider 
$$
t:=\IAbel (T) \in \widehat{H_1} (\widehat{\mathfrak{h}}^+).
$$ 
Since the automorphism of $\pi$ induced by $T$ is the conjugacy $x \mapsto \zeta x \zeta^{-1}$ by $\zeta:=[\partial \Sigma]$,
the automorphism  $\varrho^\theta (T)$ of $\widehat{\freeLie}$ is the 
conjugacy $u \mapsto {\exp(-\omega) u \exp(\omega) }$,
so that the derivation $\log \varrho^\theta (T)$ is given by $u\mapsto [u,\omega]$.
We deduce that $t$ is homogeneous of  degree~$2$.

Assume that $g=1$. Then $\mathfrak{h}_1  = 0$  so that ${H_1} ({\mathfrak{h}}^+)_2= \mathfrak{h}_2$.
Furthermore, it is easily seen that $\mathfrak{h}_2$ is one-dimensional (using the isomorphism \eqref{eq:eta}, for instance): 
it follows that $\IAbel_2$ is surjective.
Since  $\Torelli$ is infinite cyclic generated by $T$, this proves (i).

We now assume that $g\geq 3$. The map 
$\IAbel_1: H_1(\Torelli;\Q) \to \mathfrak{h}_1 \simeq \wedge^3 H$ is essentially $\tau_1$ 
and the latter is  an isomorphism by Johnson's result \cite{johnson_abelianization}. 
For any $j \ge 1$, the space $\mathfrak{h}_{j}$ is regarded as an $\Sp(H)$-submodule 
of $H^{\otimes (j+2)}$ in the following way: 
\begin{equation} \label{eq:h_T}
\mathfrak{h}_j \subset \Hom(H,\freeLie_{j+1}) \simeq H^\ast \otimes \freeLie_{j+1} 
\simeq H \otimes \freeLie_{j+1}
\subset H \otimes H^{\otimes( j+1)}. 
\end{equation}
Recall that finite-dimensional  $\Sp(H)$-irreducible modules are indexed by Young diagrams having no more than $g$ rows \cite[Sections 16 \& 17]{fh}.
The $\Sp(H)$-module $ H^{\otimes (2k+2)}$ only involves Young diagrams with an even number of boxes
whereas $\wedge^3 H$ has two components, namely $[1]$ and $[1,1,1]$,  with $1$ and $3$ boxes, respectively. 
Thus $\mathfrak{h}_{2k} \subset H^{\otimes (2k+2)}$ does not share 
any $\Sp(H)$-irreducible component with $H_1(\Torelli;\Q) \simeq \wedge^3 H$. 
This shows  (ii).  
\end{proof}

\begin{remark}
Assume that $g\geq 3$. 
The authors do not know whether  $\IAbel_{2k+1} (\Torelli)$ is zero for $k\geq 1$,  
except in the specific cases where the $\Sp(H)$-irreducible decomposition of $\mathfrak{h}_{2k+1}$ is known. 
For example, we see that $\IAbel_{3} (\Torelli)$ is trivial 
from an explicit description of $\mathfrak{h}_{3}$  (see Asada \& Nakamura \cite[Section 4]{AN}). 
Note that,  for any integer $d\geq 2$ satisfying $\IAbel_d(\Torelli)=0$, we obtain that  $\tau_d(\mcg[d])$ projects trivially on $H_1 (\mathfrak{h}^+)_d$:
this latter fact is known to be true without condition on  $d$ by the result of Hain that we have already mentioned  in Remark \ref{rem:ES}.

The case $g=2$ is exceptional since $\Torelli$ projects onto a free group of infinite rank 
by a result of Mess \cite{mess}:
in particular, $H_1(\Torelli;\Q)$ is infinite-dimensional. The authors have not investigated this case in detail. \hfill $\blacksquare$
\end{remark}

\section{Rational abelianization of the group of homology cobordisms} \label{sec:abelian}
 
Let $\Sigma$ be a compact connected oriented surface of genus $g\geq 1$ with one boundary component.
Recall that $\cob= \cob(\Sigma)$ is the corresponding monoid of homology cobordisms,
and  $\Hcob = \Hcob(\Sigma) $ is the  group of homology cobordisms.

\subsection{Rational abelian quotients}

We shall prove the following.

\begin{theorem}\label{thm:Q-abel}
There exists a non-trivial invariant $\widetilde{I}: \cob \to \Q$ of homology cobordisms with the following properties:
\begin{enumerate}
\item[(i)] $\widetilde{I}$  is invariant under the relation of  $4$-dimensional homology cobordism;
\item[(ii)] $\widetilde{I}$ is additive, i.e. $\widetilde{I}(M \cdot N) = \widetilde{I}(M) + \widetilde{I}(N)$ for all $M,N \in \Hcob$; 
\item[(iii)] $\widetilde{I}$ vanishes on the mapping class group $\mcg \subset \cob$;
\item[(iv)] there is a $k\geq 3$ such that $\widetilde{I}$ is a finite-type invariant of degree $k$, 
and $\widetilde{I}$ is determined by the action of $\cob$ 
on the $(k+1)$-st nilpotent quotient  of $\pi$. 
\end{enumerate}
\end{theorem}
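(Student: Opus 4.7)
The plan is to build $\widetilde{I}$ as the composition of the $1$-cocycle $\Abel^\theta$ of Theorem~\ref{thm:main} with a projection onto a trivial $\Sp(H)$-module. The crucial observation is that if $p:\widehat{H_1}(\widehat{\mathfrak{h}}^+) \to V$ is an $\Sp(H)$-equivariant $\Q$-linear map with $V$ carrying the trivial $\Sp(H)$-action, then the $1$-cocycle identity $\Abel^\theta(MN) = \Abel^\theta(M) + \sigma(M)\cdot\Abel^\theta(N)$ reduces to pure additivity, so $p\circ\Abel^\theta$ becomes an honest group homomorphism $\Hcob\to V$. Moreover, by formula~\eqref{eq:for_further_use} governing the dependence of $\Abel^\theta$ on the symplectic expansion, $p\circ\Abel^\theta$ is independent of $\theta$, hence canonical.

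The key input to import from Morita--Sakasai--Suzuki~\cite{mss4,mss5} is the existence of a non-trivial $\Sp(H)$-invariant $\Q$-summand of $H_1(\mathfrak{h}^+)_k$ precisely for the degrees $k\in\{6,10\}$ (when $g=1$) and $k=12$ (when $g\geq 2$). Let $p_k$ denote the composition of the degree-$k$ projection $\widehat{H_1}(\widehat{\mathfrak{h}}^+)\to H_1(\mathfrak{h}^+)_k$ with a non-zero linear form on this $\Sp$-invariant line, and set
$$
\widetilde{I}(M) := p_k\bigl(\Abel^\theta(M)\bigr) \in \Q.
$$
The preceding paragraph makes $\widetilde{I}$ a canonical additive $\Q$-valued function on $\Hcob$, hence an invariant of $\cob$ under $4$-dimensional homology cobordism; this yields properties~(i) and~(ii).

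Property~(iii) is automatic: $\widetilde{I}|_{\mcg}$ is a group homomorphism to $\Q$, and $H_1(\mcg;\Q)=0$ in all genera (namely $H_1(SL(2;\Z);\Q)=0$ for $g=1$, and the classical Mumford/Powell computations for $g\geq 2$). For property~(iv), the degree-$k$ part of $\log\bigl(\varrho^\theta(M)\sigma(M)^{-1}\bigr)$ depends only on $\rho_{k+1}(M)\in\Aut(\pi/\Gamma_{k+2}\pi)$, giving the determination from the $(k+1)$-st nilpotent quotient; the finite-type-of-degree-$k$ claim follows from Habiro's clasper calculus applied to this nilpotent-quotient action (alternatively, extending Corollary~\ref{cor:fti} to $\Hcob$ using the $\Sp$-invariance of the target). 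Finally, non-triviality of $\widetilde{I}$ on $\cob$ follows from the Garoufalidis--Levine--Habegger surjectivity of $\tau_k:\Hcob[k]\to\mathfrak{h}_k^\Z$: choosing $M\in\Hcob[k]$ whose Johnson invariant projects non-trivially under $p_k$ produces $M$ with $\widetilde{I}(M)\neq 0$.

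The main obstacle is entirely representation-theoretic: the identification of a non-trivial $\Sp(H)$-invariant $\Q$-summand of $H_1(\mathfrak{h}^+)_k$ for the specific degrees $k$ appearing in the statement. This is the delicate computation carried out in~\cite{mss4,mss5}, and it is what accounts both for the restrictive values of $k$ and for the genus-dependence in the final statement.
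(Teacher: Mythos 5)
Your overall architecture matches the paper's: compose $\Abel^\theta$ with an $\Sp$-equivariant projection to a trivial module (effectively, the coinvariants $\widehat{H_1}(\widehat{\mathfrak{h}}^+)_{\Sp}$), use the cocycle law to upgrade this to an honest homomorphism $\Hcob\to\Q$ that is canonical via \eqref{eq:for_further_use}, use the Morita--Sakasai--Suzuki representation-theoretic input for non-triviality of the target in degree $k\geq 3$, and use Garoufalidis--Levine--Habegger surjectivity of $\tau_k$ plus the finite-type considerations of Corollary~\ref{cor:fti} and Remark~\ref{rem:claspers} for non-triviality and property~(iv). This is exactly the route the paper takes.

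However, your justification of property~(iii) is wrong in genus $g=1$. You claim $H_1(\mcg;\Q)=0$ for all $g\geq 1$, identifying $\mcg$ with $SL(2;\Z)$ when $g=1$. But $\mcg$ here is the mapping class group of a genus-one surface with one boundary component, fixing the boundary pointwise; this group is isomorphic to the braid group $B_3$, whose abelianization is $\Z$ (see the paper's citation of Korkmaz). Thus $H_1(\mcg;\Q)\simeq\Q\neq 0$ in genus one, generated by the boundary Dehn twist $T$. So property~(iii) is emphatically \emph{not} automatic when $g=1$; indeed the paper points out in Example~\ref{ex:genus1degree2} that the degree-$2$ linear form $\widetilde{I}_2$ restricts to a non-zero homomorphism on $\mcg$. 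The correct argument, which you need, is the one the paper uses: $\IAbel(T)$ is homogeneous of degree $2$ (seen from the proof of Proposition~\ref{prop:t}), so any linear form supported in a single degree $k\geq 3$ kills $\IAbel(T)$ and hence the whole abelianization of $\mcg$. This observation is essential and cannot be replaced by an appeal to vanishing of $H_1(\mcg;\Q)$. (The rest of your argument is sound modulo minor imprecisions, e.g.\ the adverb ``precisely'' in characterizing the degrees with a trivial summand, and the fact that extending the finite-type estimate from $\Hcyl$ to $\Hcob$ deserves the explicit argument that $\Hcob=\Hcyl\cdot\mcg$ and $\widetilde{I}$ is additive and vanishes on $\mcg$.)
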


\begin{cor}\label{cor:Q-abel}
The rational abelianization $H_1 (\Hcob;\mathbb{Q})$ of the group $\Hcob$ is non-trivial.
\end{cor}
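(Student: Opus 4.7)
The plan is to derive Corollary \ref{cor:Q-abel} as an immediate formal consequence of Theorem \ref{thm:Q-abel}. Concretely, the invariant $\widetilde{I}:\cob \to \Q$ produced by that theorem is already essentially a group homomorphism into an abelian group, so it will factor through the rational abelianization and exhibit a non-zero element there.

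First I would record that, by property (i) of Theorem \ref{thm:Q-abel}, the map $\widetilde{I}$ is constant on $4$-dimensional homology cobordism classes, so it descends to a well-defined map $\Hcob \to \Q$, which I continue to denote by $\widetilde{I}$. By property (ii) of Theorem \ref{thm:Q-abel}, this descended map is additive with respect to the composition operation on $\Hcob$, i.e.\ it is a group homomorphism with values in the additive group $\Q$. Being a homomorphism to an abelian group, it factors canonically through the integral abelianization $\Hcob/[\Hcob,\Hcob] = H_1(\Hcob;\Z)$, and then, since $\Q$ is a $\Q$-vector space, through $H_1(\Hcob;\Q) = H_1(\Hcob;\Z)\otimes_\Z \Q$ as a $\Q$-linear map $\overline{I}:H_1(\Hcob;\Q) \to \Q$.

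Next I would invoke the non-triviality clause of Theorem \ref{thm:Q-abel}: there exists some $M\in \cob$ with $\widetilde{I}(M)\neq 0$. Hence $\overline{I}$ is not the zero linear form, and therefore its source $H_1(\Hcob;\Q)$ must be non-zero, which is exactly the content of Corollary \ref{cor:Q-abel}.

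There is essentially no obstacle here: the entire work is concentrated in the construction of $\widetilde{I}$ carried out in Theorem \ref{thm:Q-abel}, and the corollary is simply a packaging step. The only point that deserves a brief remark in the write-up is property (iii), which, although not needed for the mere non-triviality of $H_1(\Hcob;\Q)$, shows that the resulting non-zero class in $H_1(\Hcob;\Q)$ is not detected by the mapping class group subgroup $\mcg \subset \Hcob$, so that the rational abelianization $H_1(\Hcob;\Q)$ genuinely goes beyond the abelianization of $\mcg$.
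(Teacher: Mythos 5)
Your proof is correct and matches the paper's own argument, which simply observes that properties (i) and (ii) of Theorem \ref{thm:Q-abel} make $\widetilde{I}$ descend to a non-trivial group homomorphism $\Hcob \to \Q$, forcing $H_1(\Hcob;\Q)\neq 0$. Your extra remark on property (iii) is accurate but, as you note, not needed for the corollary itself.
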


Conditions (i) and (ii) in Theorem \ref{thm:Q-abel} show that $\widetilde{I}$ induces a non-trivial group homomorphism $\widetilde{I}: \Hcob \to \Q$, 
and  Corollary \ref{cor:Q-abel} immediately follows from that.

Note that condition (iii)  is automatic in genus $g\geq 2$ since $H_1 (\mcg;\Z)$
is known to be finite cyclic in this case;
in genus $g=1$, (iii) is equivalent to say that $\widetilde{I}$ vanishes on the Dehn twist $T$ along the boundary curve
since $H_1 (\mcg;\Z)$ is infinite cyclic generated by $T$ in this case.
(See \cite[Theorem 5.1]{korkmaz}, for instance.)
Besides, it follows from (iii) that the value of $\widetilde{I}$ on  a cobordism $M \in \cob$  only depends on the oriented $3$-manifold underlying $M$
(i.e., $\widetilde{I}$ is insensitive to the boundary parametrizations $m_\pm: \Sigma \to M$).

We now explain how to construct such an invariant $\widetilde{I}$ using the results of Section~\ref{sec:h_cob}. 
Compose the map $\Abel^\theta : \Hcob \to \widehat{H_1} (\widehat{\mathfrak{h}}^+)$ 
with the projection onto the coinvariant quotient  $\widehat{H_1} (\widehat{\mathfrak{h}}^+)_{\Sp}$. 
The resulting map 
\begin{equation}   \label{eq:canonical}
\Hcob \longrightarrow \widehat{H_1}(\widehat{\mathfrak{h}}^+ )_{\Sp}
\end{equation}
is a group homomorphism, which is independent of the choice of the symplectic expansion $\theta$ by  (\ref{eq:for_further_use}). 
Our claim is that the space  $\widehat{H_1} (\widehat{\mathfrak{h}}^+)_{\Sp}$ is non-trivial in degree $>2$. 
 Consequently, there is an integer $k\geq 3$ and  a non-zero linear form $I:  \widehat{H_1}  (\widehat{\mathfrak{h}}^+)_{\Sp} \to \Q$
 which is supported in the degree $k$ part.
 By composing \eqref{eq:canonical}  with this form $I$, we obtain a non-trivial group homomorphism $\widetilde{I}: \Hcob \to \Q$.
 That  $\widetilde{I}$ satisfies (iii) follows from the previous paragraph
 and  the fact (observed in  the proof of Proposition~\ref{prop:t} for $g=1$) that  $\IAbel(T)$ is homogenous of degree~$2$.
 That $\widetilde{I}$ satisfies (iv) directly follows from its construction, 
 Corollary \ref{cor:fti} (which shows that $\widetilde{I}$ is of degree at most $k$)
 and Remark \ref{rem:claspers} (which adds that $\widetilde{I}$ is of degree $k$, exactly). 

Thus it remains to prove the above claim about  $\widehat{H_1} (\widehat{\mathfrak{h}}^+)_{\Sp}$.
The proof is divided into two cases, $g=1$ and $g>1$,  which reflects the difference of 
the structure of the Lie algebra $\widehat{\mathfrak{h}}^+$. 
Before that, we fix some notation which will be useful for both cases. 

\subsection{Notations}

Recall that $\widehat{H_1} (\widehat{\mathfrak{h}}^+)$
 has a direct product decomposition coming from the grading.  
That is, we have
\[\widehat{H_1}(\widehat{\mathfrak{h}}^+ ) = 
\prod_{k=1}^\infty \widehat{H_1}(\widehat{\mathfrak{h}}^+ )_k\]
with 
\[\widehat{H_1}(\widehat{\mathfrak{h}}^+ )_k= H_1(\mathfrak{h}^+)_k 
= \mathfrak{h}_k \ \Big/  \!\!\!\!   \sum_{\begin{subarray}{c}i+j=k \\ i\geq 1, j\geq 1 \end{subarray}}
[\mathfrak{h}_i,\mathfrak{h}_j].\] 
By fixing a  symplectic basis $(a_1,\dots,a_g,b_1,\dots,b_g)$ of $H$, 
we identify $\Sp(H)$ with the classical group $\Sp(2g;\Q)$.
Note that the choice of this basis is just for the purpose of doing computations: 
indeed the  linear forms  $I:  \widehat{H_1}  (\widehat{\mathfrak{h}}^+)_{\Sp} \to \Q$ 
that we shall construct will not  depend on this choice. 

We now recall from \cite[Section 4]{morita_survey} how to define $\Sp$-invariant linear forms on $\mathfrak{h}$.
For any integer $k\geq 1$, the degree $k$ part  $\mathfrak{h}_k$ of $\mathfrak{h}$ 
is contained in $H^{\otimes (k+2)}$ as an $\Sp(H)$-submodule: see \eqref{eq:h_T}. 
It follows from elementary invariant theory that $\big(H^{\otimes( k+2)}\big)_{\Sp}$ is trivial if $k$ is odd:
therefore  $(\mathfrak{h}_k)_{\Sp}=0$ for $k$ odd.
Assume now that $k$ is even. For any partition $p$ of the set $\{1,\dots,k+2\}$ into $(k/2+1)$ ordered pairs, 
we can apply to  $\mathfrak{h}_k$ the linear  map 
$$
C_p: H^{\otimes( k+2)} \longrightarrow \Q
$$ 
defined by contracting the components of the tensors that are matched by $p$. 
It~also follows from elementary invariant theory that
any $\Sp$-invariant linear form on $H^{\otimes (k+2)}$ is a linear combination of such maps.

\subsection{Proof of Theorem \ref{thm:Q-abel}: the genus $1$ case}\label{subsec:genus1}

For each integer $k\geq 1$, the irreducible decomposition of  $\mathfrak{h}_k$ as a representation of $\Sp(2;\mathbb{Q})=\SL (2;\mathbb{Q})$
is obtained by using an explicit character formula (see \cite[proof of Prop. 8.2]{mss4}). 
Recall that the irreducible polynomial representations of  $\SL(2;\mathbb{Q})$  are 
given by the symmetric powers  $S^j H$ for all $j \geq 0$, which correspond to the Young diagrams~$[j]$.
For instance, $\mathbb{Q}=S^0 H =[0]$ is called the \emph{invariant representation}. 

We will use  Table \ref{tab:hg=1}, which is borrowed to \cite{mss4}. 
For instance,  the $1$-st, $3$-rd, $5$-th and $8$-th rows of this table read 
$$
\mathfrak{h}_1 =\mathfrak{h}_3=\mathfrak{h}_5=0, \quad 
\mathfrak{h}_8 =S^6 H \oplus (S^2 H)^{\oplus 2}.
$$
We will also use the following well-known formula giving the irreducible decomposition of the tensor product
of two symmetric powers (where $k \ge l$): 
\[S^k H\otimes S^l H = S^{k+l} H \oplus S^{k+l-2} H \oplus \cdots \oplus S^{k-l} H\] 
This formula implies that $(S^k H\otimes S^l H)_{\SL} \simeq \mathbb{Q}$
if and only if $k=l$. Otherwise we have $(S^k H\otimes S^l H)_{\SL}=0$, that is $S^k H\otimes S^l H$ includes no invariant representation. 

\begin{table}[h]
\caption{The irreducible decompositions of $\mathfrak{h}_k$ in genus  $g=1$  (see \cite[Table 8]{mss4}).}
\begin{center}
\begin{tabular}{|c|l|}
\noalign{\hrule height0.8pt}
\hfil $k$& $\hspace{1cm}\text{irreducible components of $\mathfrak{h}_k$}$ \\
\hline
$1$ & $\{0\}$ \\
\hline
$2$ & $[0]$ \\
\hline
$3$ & $\{0\}$ \\
\hline
$4$ & $[2]$ \\
\hline
$5$ & $\{0\}$ \\
\hline
$6$ & $[4],[0]$\\
\hline
$7$ & $[3]$ \\
\hline
$8$ & $[6], 2[2]$ \\
\hline
$9$ & $[5],[3],[1]$ \\
\hline
$10$ & $[8],[6],3[4],[2],3[0]$ \\
\hline
$11$ & $[7],2[5],4[3],2[1]$ \\
\hline
$12$ & $[10],[8],5[6],4[4],8[2]$ \\
\hline
$13$ & $2[9],3[7],8[5],9[3],6[1]$\\ 
\hline
$14$  & $[12],[10],7[8],9[6],18[4],11[2],11[0]$ \\ 
\hline
$15$ & $2[11],5[9],14[7],21[5],26[3],17[1]$\\ 
\hline
$16$ & $[14],2[12],9[10],16[8],38[6],38[4],46[2],10[0]$\\ 
\hline
$17$ & $2[13],7[11],23[9],42[7],68[5],72[3],48[1]$ \\ 
\hline
$18$ & $[16],2[14],12[12],26[10],67[8],96[6],138[4],100[2],57[0]$\\   
\noalign{\hrule height0.8pt}
\end{tabular}
\end{center}
\label{tab:hg=1}
\end{table}

We deduce from Table \ref{tab:hg=1} that $ {H_1}({\mathfrak{h}}^+)_k$ 
has $1$, $1$, $3$  copies of the invariant representation $[0]$ for $k=2,6,10$ respectively. 
This is because there are no chances to eliminate the invariant representations present in $\mathfrak{h}_k$ 
by brackets of lower degree terms. 
From Table \ref{tab:hg=1}, and  using the fact that $\left(\wedge^2 (S^{2j+1}H)\right)_{\SL} \simeq  \mathbb{Q}$ for any $j$,
it can be checked that ${H_1}({\mathfrak{h}}^+ )_{14}$ and 
${H_1}({\mathfrak{h}}^+ )_{18}$ have at least  $11-2=9$ and $57-35=22$ copies of the invariant representation $[0]$. 
(The authors have not checked whether one of the $10$ copies of $\mathfrak{h}_{16}$ 
survives in ${H_1}({\mathfrak{h}}^+ )_{16}$.)
This concludes the proof of Theorem \ref{thm:Q-abel} for $g=1$ 
by considering $k=6,10, 14$ or $18$, for instance.

In the rest of this subsection, we   specify some non-trivial  $\SL$-invariant linear forms  
$I_k: {H_1}({\mathfrak{h}}^+ )_k\to \Q$ in degrees  $k=6$ and $10$. 
Using an explicit formula for a symplectic expansion $\theta$ of genus $1$ (up to degree $k+1$),
 we could write down explicit formulas of the resulting invariants $\widetilde{I}_k: \cob \to \Q$.
As a warm up, we start by considering the case $k=2$, 
but we emphasize that the corresponding homomorphism 
$\widetilde{I}_2: \Hcob \to \Q$ is not trivial on $\mcg$.

\begin{example}\label{ex:genus1degree2}
Since $\mathfrak{h}_2$ is one-dimensional  and $\mathfrak{h}_1=\{0\}$, 
we have 
$$
\mathfrak{h}_2 =  H_1(\mathfrak{h}^+)_2 = \big(  H_1(\mathfrak{h}^+)_2  \big)_{\SL} \simeq \Q. 
$$
Let $I_2: \mathfrak{h}_2 \to \Q$ be the $\SL$-equivariant homomorphism obtained by restricting 
\[  C_{(12)(34)} : H^{\otimes 4} \longrightarrow \Q, \quad x_1 \otimes x_2 \otimes x_3 \otimes x_4 \longmapsto \omega (x_1,x_2) \omega (x_3,x_4)\]
to $\mathfrak{h}_2$. 
Let $t\in \mathfrak{h}_2$ be the derivation of $\freeLie$ defined by  $t(u):=[u,\omega]$.
Then $I_2(t)=6$ and we deduce that $I_2$ is an isomorphism. 
Note that $t$ is the value of $\IAbel$ on the Dehn twist $T$ along 
the boundary curve (see the proof of Proposition \ref{prop:t}).
Since~$T$ generates $H_1(\mcg;\Z) \simeq \Z$, 
we deduce that
$\widetilde{I}_2:\Hcob \to \Q$ induces an isomorphism 
between $H_1(\mcg;\Q)$ and $\Q$.  \hfill $\blacksquare$  
\end{example}

\begin{example}
Let $I_6: \mathfrak{h}_6 \to \mathbb{Q}$ be the $\SL$-equivariant homomorphism defined by restricting 
$C_{(12)(34)(56)(78)}: H^{\otimes 8 } \to \Q $ to $\mathfrak{h}_6$. By a computer calculation, we find that
$$
I_6\, \eta \Big(\quad \labellist
\small\hair 2pt
 \pinlabel {$a$} [r] at 1 78
 \pinlabel {$b$} [t] at 71 4
 \pinlabel {$a$} [t] at 144 4
 \pinlabel {$b$} [t] at 216 4
 \pinlabel {$a$} [t] at 288 4
 \pinlabel {$b$} [t] at 360 4
 \pinlabel {$a$} [t] at 432 4
 \pinlabel {$b$} [l] at 505 77
\endlabellist
\includegraphics[scale=0.2]{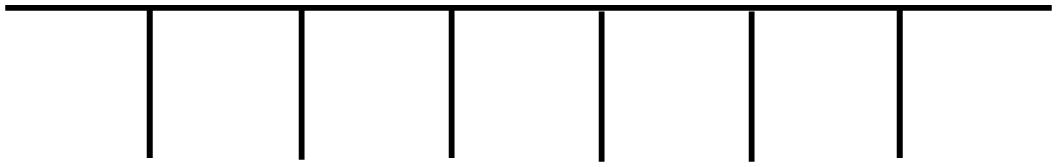} \quad \Big) =72
$$
where $a:=a_1$, $b:=b_1$
and $\eta: \A^{t,c}(H) \to \mathfrak{h}^+$ is the isomorphism recalled at~\eqref{eq:eta}. 
Hence $I_6$ induces an isomorphism $({H_1}({\mathfrak{h}}^+)_6)_{\SL} \simeq \mathbb{Q}$.  \hfill $\blacksquare$ 
\end{example}

\begin{example}
Let $I_{10}^{(1)}, I_{10}^{(2)}, I_{10}^{(3)}: \mathfrak{h}_{10} \to \Q$ be the $\SL$-equivariant homomorphisms defined by restricting
\begin{align*}
&C_{(1\, 2)(3\, 4)(5\, 6)(7\, 8)(9\,10)(11\,12)}, \\
&C_{(1\, 2)(3\, 5)(4\, 6)(7\, 8)(9\,10)(11\,12)}, \\ 
&C_{(1\, 2)(3\, 6)(4\, 7)(5\, 9)(8\,10)(11\,12)}
\end{align*}
to $\mathfrak{h}_{10}$, respectively. Consider also the following elements of $\mathfrak{h}_{10}$:
\begin{align*}
\xi_1 &:= 
\quad \eta\Big( \quad { \labellist
\small\hair 2pt
 \pinlabel {$a$} [r] at 2 77
 \pinlabel {$b$} [t] at 71 4
 \pinlabel {$a$} [t] at 145 4
 \pinlabel {$b$} [t] at 217 4
 \pinlabel {$a$} [t] at 289 4
 \pinlabel {$b$} [t] at 361 4
 \pinlabel {$a$} [t] at 432 4
 \pinlabel {$b$} [t] at 505 4
 \pinlabel {$a$} [t] at 575 4
 \pinlabel {$b$} [t] at 649 4
 \pinlabel {$a$} [t] at 721 4
 \pinlabel {$b$} [l] at 793 78
\endlabellist
\centering
\includegraphics[scale=0.2]{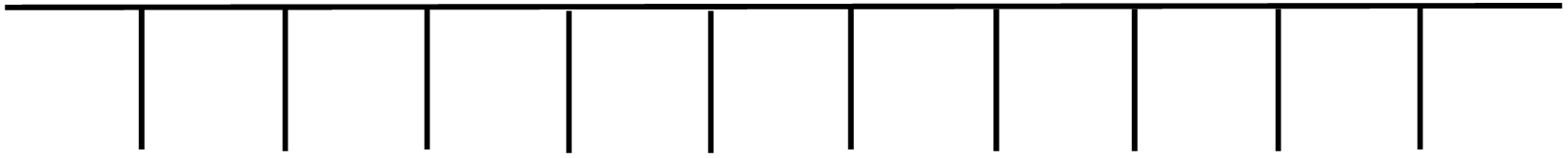}} \quad \Big) , \\[0.2cm]
\xi_2 &:= \quad \eta\Big( \quad { \labellist
\small\hair 2pt
 \pinlabel {$a$} [r] at 2 77
 \pinlabel {$b$} [t] at 71 4
 \pinlabel {$a$} [t] at 145 4
 \pinlabel {$a$} [t] at 217 4
 \pinlabel {$b$} [t] at 289 4
 \pinlabel {$a$} [t] at 361 4
 \pinlabel {$b$} [t] at 432 4
 \pinlabel {$b$} [t] at 505 4
 \pinlabel {$a$} [t] at 575 4
 \pinlabel {$b$} [t] at 649 4
 \pinlabel {$a$} [t] at 721 4
 \pinlabel {$b$} [l] at 793 78
\endlabellist
\centering
\includegraphics[scale=0.2]{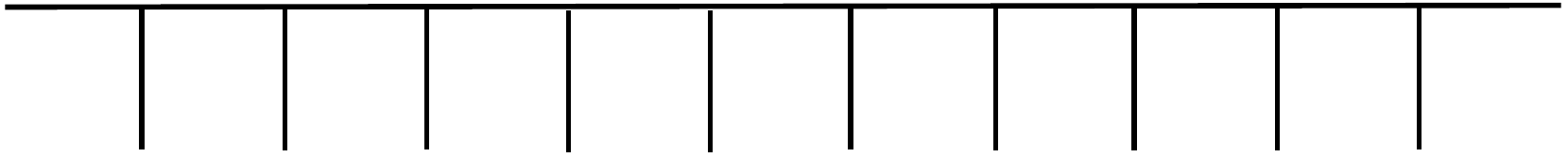}} \quad \Big) , \\[0.2cm]
\xi_3 & :=
\quad \eta\Big( \quad { \labellist
\small\hair 2pt
 \pinlabel {$a$} [r] at 2 77
 \pinlabel {$b$} [t] at 71 4
 \pinlabel {$a$} [t] at 145 4
 \pinlabel {$a$} [t] at 217 4
 \pinlabel {$a$} [t] at 289 4
 \pinlabel {$a$} [t] at 361 4
 \pinlabel {$b$} [t] at 432 4
 \pinlabel {$b$} [t] at 505 4
 \pinlabel {$b$} [t] at 575 4
 \pinlabel {$b$} [t] at 649 4
 \pinlabel {$a$} [t] at 721 4
 \pinlabel {$b$} [l] at 793 78
\endlabellist
\centering
\includegraphics[scale=0.2]{twelve_legs}} \quad \Big) .
\end{align*}
By a computer calculation, we find that 
\[\Big(I_{10}^{(j)} (\xi_i)\Big)_{i,j}=\begin{pmatrix}
324 & 32 & 18 \\
0 & -24 & -24 \\
0 & 0 & 150
\end{pmatrix}.\]
So  the triplet $\big(I_{10}^{(1)}, I_{10}^{(2)}, I_{10}^{(3)}\big)$ induces an isomorphism $({H_1}({\mathfrak{h}}^+)_{10})_{\SL} \simeq \mathbb{Q}^3$.
Thus we get three linearly independent homomorphisms $\widetilde{I}_{10}^{(1)}, \widetilde{I}_{10}^{(2)}, \widetilde{I}_{10}^{(3)}: \Hcob \to \Q$. \hfill $\blacksquare$ 
\end{example}

\subsection{Proof of Theorem \ref{thm:Q-abel}: the higher genus cases}\label{subsec:higher_genus}

In the case $g \ge 2$, we can take two approaches which are logically independent of each other. 
Note that the structure of $\widehat{\mathfrak{h}}^+$ for $g \ge 2$ is fairly different from  that  for $g=1$. 

One approach uses Kontsevich's theorem \cite{kontsevich1, kontsevich2} 
(see also Conant \& Vogtmann \cite{cv0}) to study ${H_1}({\mathfrak{h}}^+)_{\Sp}$. 
From this, we obtain an isomorphism 
\begin{equation}    \label{eq:Kontsevich}
\varinjlim_{g \to \infty} \big({H_1}({\mathfrak{h}}^+ )_{2n}\big)_{\Sp} 
\simeq H^{2n-1} \big(\Out (F_{n+1});\mathbb{Q}\big),
\end{equation}
for all $n \ge 1$, where 
$\Out (F_{n+1})$ denotes the outer automorphism group of the free group of rank $n+1$. 
When $n=6$, we see that $\big({H_1}({\mathfrak{h}}^+ )_{12}\big)_{\Sp}$ 
is stably isomorphic to $H^{11} (\Out (F_7);\mathbb{Q})$. It turns out that, very recently, 
Bartholdi \cite{bartholdi2} determined $H^\ast (\Out (F_7);\mathbb{Q})$ by using a computer 
and it results that $H^{11} (\Out (F_7);\mathbb{Q}) \simeq \mathbb{Q}$. 
Hence we see that $\big({H_1}({\mathfrak{h}}^+ )_{12}\big)_{\Sp}$ is non-trivial for sufficiently large $g$. 

To prove our claim for all $g \ge 2$, we have to take 
another approach which uses results
from a forthcoming paper \cite{mss5} by Morita, Suzuki and the second-named author.  
This paper provides an explicit $\Sp$-invariant linear form 
$$
I_{12}: \mathfrak{h}_{12} \longrightarrow \mathbb{Q}
$$ 
dual  to the above-mentioned 
stable class of $({H_1}\big({\mathfrak{h}}^+)_{12}\big)_{\Sp}$. 
To be more specific,  $I_{12}$ is given there as the restriction of a map 
$C: H^{\otimes 14} \to \Q$ to $\mathfrak{h}_{12}$ where $C$ is defined by a linear combination 
\begin{align*}
C&:=2160\, C_{(1\, 2)(3\, 9)(4\,11)(5\,12)(6\,14)(7\,13)(8\,10)}\\
&\quad -2616\, C_{(1\, 2)(3\, 9)(4\,11)(5\,13)(6\,12)(7\,14)(8\,10)}\\
&\quad -180\, C_{(1\, 2)(3\, 9)(4\,11)(5\,12)(6\,13)(7\,14)(8\,10)} \ +\cdots
\end{align*}
of 647 multiple contractions. 
It is checked in \cite{mss5} that the form $I_{12}$ vanishes on the subspace 
$\sum_{i=1}^6 [\mathfrak{h}_i, \mathfrak{h}_{12-i}]$, 
and  that it is non-trivial for all $g \ge 2$ by an explicit computation. 
This concludes the proof of Theorem \ref{thm:Q-abel} for $g\geq 2$.

Finally, we mention that $\widetilde{I}_{12}: \Hcob \to \mathbb{Q}$ is invariant under stabilization.
Recall the notations of  Corollary \ref{cor:stabilization}: 
$\Sigma'$ is a surface of genus $g+1$ in which $\Sigma$ is embedded, 
$\Hcob'$ is the corresponding group of homology cobordisms and the map $\Hcob \to \Hcob'$ is the canonical one.

\begin{proposition} \label{prop:stab_I12}
The following diagram is commutative:
$$
\xymatrix{
\Hcob \ar[d] \ar[r]^-{ \widetilde{I}_{12} }  & \Q\\
\Hcob' \ar[ru]_-{ \widetilde{I}_{12} }  & 
}
$$
\end{proposition}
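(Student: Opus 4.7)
The plan is to reduce the claim to a statement at the Lie-algebra level by choosing compatible symplectic expansions on $\Sigma$ and $\Sigma'$, and then to conclude by invoking the stability of the contraction form $I_{12}$.

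Since the map \eqref{eq:canonical} is independent of the chosen symplectic expansion, it is enough to exhibit a pair of symplectic expansions $\theta$ of $\pi$ and $\theta'$ of $\pi'$ that are compatible in the following sense: in the splitting $\pi' \simeq \pi * F_2$ induced by the boundary-connected sum decomposition $\Sigma' = \Sigma\,\natural\,\Sigma_{1,1}$, the expansion $\theta'$ agrees with $\theta$ on $\pi$ and is standard on the generators of $F_2$. Such a $\theta'$ can be built from $\theta$ by extending degree-by-degree (as in \cite[Lemma~2.16]{massuyeau_IMH}) to generators of the $F_2$-factor, and then adjusting so that the symplectic condition $\theta'([\partial \Sigma']) = \exp(-\omega')$ is enforced; any residual discrepancy is absorbed in the degree of freedom washed out by the projection to $\Sp$-coinvariants.

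For $M \in \cob$ and its stabilization $M' = M\,\natural\,(\Sigma_{1,1}\times [-1,1])$, the cobordism $M'$ acts on $\pi'$ as $M$ on the first factor and trivially on $F_2$, while $\sigma(M') = \sigma(M)\oplus \Id_{H_{\mathrm{new}}}$ with $H_{\mathrm{new}}:=H_1(\Sigma_{1,1};\Q)$. With the above choices, $\varrho^{\theta'}(M') \in \Aut_{\omega'}(\widehat{\freeLie}(H'))$ restricts to $\varrho^{\theta}(M)$ on $\widehat{\freeLie}(H) \subset \widehat{\freeLie}(H')$ and acts as the identity on the generators spanning $H_{\mathrm{new}}$. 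Taking logarithms, $\Abel^{\theta'}(M') \in \widehat{\mathfrak{h}}^{'+}$ is the image of $\Abel^{\theta}(M) \in \widehat{\mathfrak{h}}^+$ under the ``extension-by-zero'' map $\iota:\widehat{\mathfrak{h}}^+ \to \widehat{\mathfrak{h}}^{'+}$ that sends a derivation to the derivation agreeing with it on $H$ and vanishing on $H_{\mathrm{new}}$. This $\iota$ is degree-preserving and descends to $\widehat{H_1}(\widehat{\mathfrak{h}}^+) \to \widehat{H_1}(\widehat{\mathfrak{h}}^{'+})$.

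It then remains to verify that $I_{12} \circ \iota = I_{12}$ on $\mathfrak{h}_{12}$. By its construction in \cite{mss5}, the form $I_{12}$ on $\mathfrak{h}'_{12} \subset (H')^{\otimes 14}$ is defined by the same $\Q$-linear combination of $\Sp$-invariant multiple contractions as the form $I_{12}$ on $\mathfrak{h}_{12} \subset H^{\otimes 14}$, each contraction pairing two tensor factors against the symplectic form. Since $\iota(\delta)$ has all its $14$ tensor entries in $H \subset H'$ and since $\omega'$ restricts to $\omega$ on $H \otimes H$, each multiple contraction yields the same rational number on $\iota(\delta)$ as on $\delta$. Combining the three steps gives $\widetilde{I}_{12}(M') = \widetilde{I}_{12}(M)$, as desired. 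The principal obstacle lies in this final step: rigorously confirming that the $647$-term linear combination defining $I_{12}$ in \cite{mss5} is genuinely insensitive to the enlargement $H \hookrightarrow H'$. This is essentially the assertion that $I_{12}$ represents a \emph{stable} class in $\varinjlim_{g\to\infty}({H_1}({\mathfrak{h}}^+)_{12})_{\Sp}$ (compare with the stable isomorphism \eqref{eq:Kontsevich} used to identify the class), which is the very feature the construction in \cite{mss5} is designed to produce, but which warrants a direct check against the explicit coefficient pattern.
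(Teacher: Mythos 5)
Your overall plan is sensible, and steps (2) and (3) are on solid ground; the real gap is in step (1), and --- ironically --- the step you flag at the end as the ``principal obstacle'' is actually the safe one.

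\textbf{On the final step.} The identity $I_{12}\circ\iota = I_{12}$ on $\mathfrak{h}_{12}$ needs no check against the coefficient pattern. The $647$-term combination is a $\Q$-linear combination of multiple contractions; each contraction is a composite of pairings with the symplectic form, and $\omega'$ restricts to $\omega$ on $H\otimes H$. Since $\iota(\delta)$ has all $14$ tensor slots in $H\subset H'$, every contraction returns the same value on $\iota(\delta)$ as on $\delta$, independently of the coefficients. Your own sentence justifying this is exactly the paper's argument (``since $I_{12}$ is defined by some contractions with $\omega$, the triangle $\ldots$ certainly commutes''); the closing paragraph's worry is unfounded.

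\textbf{The real gap: no compatible symplectic expansion.} You require simultaneously that (a) $\theta'$ be a symplectic expansion of $\pi'$, (b) $\theta'|_{\pi}=\theta$ with values in $\widehat T(H)\subset\widehat T(H')$, and (c) $\theta'$ be ``standard'' on the $F_2$-generators $\alpha_{g+1},\beta_{g+1}$ (so that $\varrho^{\theta'}(M')$ fixes $H_{\mathrm{new}}$ pointwise, i.e.\ $\log\theta'(\alpha_{g+1})=a_{g+1}$, $\log\theta'(\beta_{g+1})=b_{g+1}$). These three conditions are incompatible. Writing $\zeta'=\zeta\cdot[\alpha_{g+1},\beta_{g+1}]$ and using (b), (c), one gets $\theta'(\zeta')=\exp(-\omega)\cdot\theta'([\alpha_{g+1},\beta_{g+1}])$ where the second factor lies in $\exp(\widehat{\freeLie}(H_{\mathrm{new}}))$. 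On the other hand (a) demands $\theta'(\zeta')=\exp(-\omega-\omega_{\mathrm{new}})$, and by BCH this equals $\exp(-\omega)\cdot\exp\bigl(-\omega_{\mathrm{new}}-\tfrac12[\omega,\omega_{\mathrm{new}}]+\cdots\bigr)$, which involves mixed brackets such as $[\omega,\omega_{\mathrm{new}}]\notin\widehat{\freeLie}(H_{\mathrm{new}})$. So (c) must be dropped, but dropping (c) precisely destroys the extension-by-zero identification: once $\log\theta'(\alpha_{g+1})=a_{g+1}+u_{\alpha}$ with $u_{\alpha}$ involving $H$, the constraint $\log\varrho^{\theta'}(M')\bigl(\log\theta'(\alpha_{g+1})\bigr)=0$ yields $\log\varrho^{\theta'}(M')(a_{g+1})=-\log\varrho^{\theta'}(M')(u_{\alpha})\neq 0$ in general, so $\Abel^{\theta'}(M')\neq\iota(\Abel^{\theta}(M))$ even as derivations.

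\textbf{Why the hand-wave does not close it.} The ``residual discrepancy \ldots absorbed by the projection to $\Sp$-coinvariants'' has no argument behind it. Formula \eqref{eq:for_further_use} governs the change between \emph{two symplectic} expansions of the \emph{same} surface, and the discrepancy term $[1-\sigma(M)]\cdot\log\psi$ vanishes in coinvariants for $M\in\Hcyl$. It does not compare a symplectic $\theta'$ with a non-symplectic-but-compatible $\theta''$: the latter is not covered by the formula (nor does $\Abel^{\theta''}$ land in $\widehat{\mathfrak h}^{'+}$). What you would need is precisely that $\iota(\Abel^\theta(M))-\Abel^{\theta'}(M')\in[\widehat{\mathfrak h}^{'+},\widehat{\mathfrak h}^{'+}]$, and there is no reason this follows from coinvariance alone. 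This is exactly the content of Corollary~\ref{cor:stabilization}, which the paper proves via the monoidal property of the LMO functor and then feeds into the proof of the present proposition after first reducing, via additivity of $\widetilde I_{12}$ and its vanishing on $\mcg$, from $\Hcob$ to $\Hcyl$. (The paper also offers a second, purely algebraic proof valid for large $g$: it shows $\bigl(H_1(\mathfrak h^+)_k\bigr)_{\Sp}=0$ for $k\le 11$ and bootstraps up the Johnson filtration; your approach resembles neither.) To repair your argument you would either have to invoke Corollary~\ref{cor:stabilization} as the paper does, or supply a genuine proof that the mismatch $\iota(\Abel^\theta(M))-\Abel^{\theta'}(M')$ dies in $\Sp$-coinvariants.
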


\begin{proof}
Let $M \in \Hcob$. We choose an $f\in \mcg$ such that $\sigma(f)= \sigma(M)^{-1} \in \Sp(H)$: then $M f\in \Hcyl$.
We denote by   $M'\in \Hcob'$ and  $f'\in \mcg'$ the elements corresponding to $M$ and $f$, respectively, by stabilization. 
Then 
$$
\widetilde{I}_{12}(M) = \widetilde{I}_{12}(M) + \widetilde{I}_{12}(f) =  \widetilde{I}_{12}(M f) = I_{12}\big( \IAbel (M f) \big)
$$
where $I_{12}$ is regarded as a linear form on $\widehat{H_1}(\widehat{\mathfrak{h}}^+ )$.
Similarly we get $\widetilde{I}_{12}(M') =  I_{12}\big( \IAbel (M' f')\big)$. 
By Corollary \ref{cor:stabilization}, we know that $ \IAbel (M f)$ is mapped to  $ \IAbel (M' f')$ 
by the homomorphism $\widehat{H_1}(\widehat{\mathfrak{h}}^+ ) \to \widehat{H_1}(\widehat{\mathfrak{h}}^{'+} )$ corresponding to the stabilization $H \to H'$.
Besides, since $I_{12}$ is defined by some contractions with the homology intersection form $\omega$, 
the triangle
$$
\xymatrix{
\mathfrak{h}_{12}  \ar[d] \ar[r]^-{ {I}_{12} }  & \Q\\
\mathfrak{h}'_{12}  \ar[ru]_-{ {I}_{12} }  & 
}
$$
certainly commutes. We conclude that $\widetilde{I}_{12}(M)  = \widetilde{I}_{12}(M')$.

We also give another proof which works only for $g$ sufficiently large,  
but not assumes familiarity with the LMO homomorphism.   
The dimensions of   $(\mathfrak{h}_{12})_{\Sp}$ and 
$\big({H_1}({\mathfrak{h}}^+ )_{k}\big)_{\Sp}$ for $k \le 11$ stabilize for $g$ sufficiently large.
(See \cite[Theorem~1.2]{mss4} for a computation of the stable range of  $(\mathfrak{h}_{2n})_{\Sp}$ for any $n\geq 1$;
it is possible to estimate the stable range of $\big({H_1}({\mathfrak{h}}^+ )_{k}\big)_{\Sp}$ from this.) 
Furthermore we have $\big({H_1}({\mathfrak{h}}^+ )_{k}\big)_{\Sp} =0$  for $k\leq 11$ and for $g$ sufficiently large: 
for $k=2n+1$ with $n\geq 0$, this follows from the fact that $(\mathfrak{h}_{2n+1})_{\Sp}=0$;
for $k=2n$, this follows from \eqref{eq:Kontsevich} using the fact that $H^{2n-1} \big(\Out (F_{n+1});\mathbb{Q}\big)=0$  for $n\in \{1,\dots,5\}$
(see \cite{HV} for $n\in\{1,2,3\}$, \cite{gerlits} for $n=4$ and \cite{ohashi}  for $n=5$). 
Consider now the difference
$$
D:=\widetilde{I}_{12}\circ s - \widetilde{I}_{12}: \Hcob \longrightarrow \mathbb{Q}
$$ 
where $s:\Hcob \to \Hcob'$ is the canonical map. 
The restriction of $\widetilde{I}_{12}$ to $\Hcob [12]$ factors through 
the $12$th Johnson homomorphism, which is invariant under stabilization.  
Hence  $D$ vanishes on $\Hcob[12]$ and induces a map $\overline{D}: \Hcob/\Hcob[12] \to \mathbb{Q}$. 
Consider the restriction of $\overline{D}$ to $\Hcob [11]/\Hcob[12] \simeq \mathfrak{h}_{11}^\mathbb{Z}$. 
The resulting map $\overline{D}: \mathfrak{h}_{11}^\mathbb{Z} \to \mathbb{Q}$ is an 
$\Sp(H^\mathbb{Z})$-equivariant homomorphism
and so, by \cite[Lemma 2.2.8]{AN}, it induces an $\Sp(H)$-equivariant homomorphism 
$\overline{D} \otimes_\Z \mathbb{Q}: \mathfrak{h}_{11} \to \mathbb{Q}$. 
Since $I_{12}$ vanishes on commutators,
 $\overline{D} \otimes_\Z \mathbb{Q}$ factors through $H_1 ({\mathfrak{h}}^+)_{11}$ 
but, being $\Sp(H)$-equivariant, it must then factor through $\big(H_1 ({\mathfrak{h}}^+)_{11}\big)_{\Sp}=0$.
We deduce that $\overline{D}: \Hcob/\Hcob[12] \to \Q$ is trivial on $\Hcob[11]$, so that it induces a map $\overline{D}_1: \Hcob/\Hcob[11] \to \mathbb{Q}$.
Proceeding inductively, we obtain successively that $D$ vanishes on 
$$
\Hcob[12] \subset \Hcob[11] \subset \cdots \subset \Hcob[2] \subset \Hcob[1]=\Hcyl.
$$
Using again the facts that $\widetilde{I}_{12}$ is additive, that it vanishes on $\mcg$ and that $\Hcob= \Hcyl \cdot \mcg$, 
we conclude that $D:\Hcob \to \Q$ is trivial.
\end{proof}

\subsection{Final remarks}\label{subsec:finalremark}

As we saw at \eqref{eq:canonical}, 
the map $\Abel^\theta: \Hcob \to \widehat{H_1}(\widehat{\mathfrak{h}}^+ )$ induces  (for any symplectic expansion $\theta$)  a linear map 
$$
H_1(\Hcob;\Q) \longrightarrow \widehat{H_1}(\widehat{\mathfrak{h}}^+ )_{\Sp}
$$
(which does not depend on $\theta$). It is surjective after truncation at any degree,
by the last statement of Theorem \ref{thm:main}.
The authors do not know whether it is injective.

The virtual cohomological dimension of $\Out (F_{n+1})$ is $2n-1$ by Culler \& Vogtmann \cite{cuv}. 
Every time one finds a non-trivial element of the top-dimensional rational cohomology group of $\Out (F_{n+1})$,  
the first method described in Section~\ref{subsec:higher_genus} produces 
a new invariant   $\widetilde{I}_{2n}$ of homology cobordisms
satisfying the conditions of Theorem \ref{thm:Q-abel} with $k:=2n$. 
Note that, if it existed, $\widetilde{I}_{2n}$ would be invariant under stabilization of the reference surface $\Sigma$ 
(using the same arguments as in the first proof of Proposition \ref{prop:stab_I12})
 and it would exist at least for $g$ in  the stable range  of $\big({H_1}({\mathfrak{h}}^+ )_{2n}\big)_{\Sp}$.
Yet, deciding whether $H^{2n-1}(\Out (F_{n+1});\mathbb{Q})$ is non-trivial for $n>6$ 
seems to be a very difficult problem.

\appendix

\section{A noncommutative version of the log-determinant} \label{sec:cyclic_things}

In this appendix, we define a kind of  log-determinant for  automorphisms of a free module with coefficients in a noncommutative ring.

\subsection{The noncommutative trace}

We recall from \cite[Section 1.16]{karoubi} how to define the trace of endomorphisms when the ground ring is not commutative. 

Let $R$ be an algebra (over $\Q)$ and let $M$ be a right free $R$-module of finite rank $n\geq 1$.
The ($\Q$-vector) space $\Hom_R(M,R)$ is a left $R$-module in the usual way.
By the assumption on $M$, the canonical map 
$$
\tau:M\otimes_R \Hom_R(M,R) \longrightarrow \Hom_R(M,M) =\End_R(M)
$$
is a linear isomorphism (over $\Q$). Because $R$ is not assumed to be commutative, 
the evaluation map $\operatorname{ev}:M \times  \Hom_R(M,R) \to R$ does not induce a linear map  $M\otimes_R \Hom_R(M,R)\to R$ generally speaking. 
Nonetheless, it does  induce a linear map  $\operatorname{ev}: M\otimes_R \Hom_R(M,R)\to R/[R,R]$ 
where $[R,R]$ denotes the subspace of $R$ spanned by commutators $[u,v]=uv-vu$, for all $u,v \in R$. 
Thus, the \emph{trace} map is defined~by the composition
$$
 \trace:= \operatorname{ev} \circ \tau^{-1}:  \End_R(M) \longrightarrow  R/[R,R].
$$  

The trace of endomorphisms can be computed as follows.
Let $x:=(x_1, \dots, x_n)$ be an arbitrary basis of the free $R$-module  $M$. Then 
\begin{equation} \label{eq:trace_as_usual}
\trace(g) = \sum_{i=1}^n \left(\hbox{\small $i$-th coordinate of $g(x_i )$ in the basis $x$} \right)
\end{equation}
for any $g \in \End_R(M)$.
The following properties of $ \trace$ are easily checked: 
\begin{itemize}
\item[(i)] $ \trace$ is the usual trace map  $\End_R(M)  \to R$ if $R$ is commutative;
\item[(ii)] for any $g,h\in \End_R(M)$, we have $ \trace(gh)=  \trace(hg)$.
\end{itemize}

\subsection{Groups of automorphisms} \label{subsec:group_auto}

We now assume that $R$ has an augmentation $\varepsilon:R \to \Q$,
and that the $I$-adic filtration 
$$
R=I^0 \supset I^1 \supset I^2 \supset \cdots
$$
defined by the augmentation ideal $I:= \ker(\varepsilon)$ is complete.

Let $M$ be a right free $R$-module of finite rank $n\geq 1$. 
We shall equip $M$ with the $I$-adic filtration
$$
M=  M I^0 \supset  M I^1  \supset  M I^2 \supset  \cdots 
$$
which, by our assumptions, is complete. For any $R$-linear map $f:M \to M$, 
we denote by $f_\varepsilon$ the endomorphism of the vector space  $M/MI$ that is induced by $f$.

\begin{lemma}
Any $R$-linear map $f: M \to M$ such that $f_\varepsilon=\Id_{M/MI}$  is an automorphism.
\end{lemma}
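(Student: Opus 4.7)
The strategy is the standard geometric series trick, adapted to the filtered setting. Write $f = \Id_M + g$ where $g := f - \Id_M$ is an $R$-linear endomorphism of $M$. The hypothesis $f_\varepsilon = \Id_{M/MI}$ says exactly that $g$ sends $M$ into $MI$, i.e.\ $g(M) \subset MI$.

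The first step is to iterate: I claim that $g^k(M) \subset MI^k$ for every $k \geq 0$. This follows by induction, using the $R$-linearity of $g$ together with the fact that $MI^{k-1} \cdot I \subset MI^k$ (which in turn rests on $I^{k-1} \cdot I \subset I^k$ in $R$). Consequently, for any fixed $m \in M$, the partial sums $\sum_{k=0}^N (-1)^k g^k(m)$ form a Cauchy sequence with respect to the $I$-adic filtration of $M$, and by completeness of this filtration they converge to a unique limit $h(m) \in M$.

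Next I would check that $h: M \to M$ is $R$-linear and filtration-preserving. $R$-linearity follows because each partial sum is $R$-linear, and passing to the limit is compatible with the right $R$-action (which is continuous for the $I$-adic topology). The fact that $h$ is a two-sided inverse of $f$ is then the usual telescoping computation: from $(\Id + g) \sum_{k=0}^N (-1)^k g^k = \Id - (-g)^{N+1}$ and the analogous identity on the other side, one passes to the limit $N \to \infty$, using that $(-g)^{N+1}(M) \subset MI^{N+1}$ shrinks to $0$ in the complete filtration. Hence $f \circ h = h \circ f = \Id_M$ and $f$ is an automorphism.

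There is no real obstacle here; the only point that requires attention is to make sure that completeness of the filtration on $M$ (which the paper has just set up from completeness of the $I$-adic filtration on $R$ and freeness of $M$) is genuinely available, so that the series $\sum (-1)^k g^k$ converges termwise on each element of $M$ and defines an honest $R$-linear endomorphism.
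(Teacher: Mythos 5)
Your proof is correct and follows essentially the same route as the paper's: write $f = \Id_M + g$ with $g = f - \Id_M$ landing in $MI$, show $g^k(M) \subset MI^k$ by induction, and use completeness of the $I$-adic filtration on $M$ to sum the geometric series $\sum_{k \geq 0}(-1)^k g^k$ into a two-sided inverse of $f$. You merely spell out the convergence, $R$-linearity, and telescoping steps in more detail than the paper does.
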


\begin{proof}
By the assumption on $f$, the $R$-linear map $\phi := f - \Id_M :M \to M$ takes values in $MI$.
An induction on $k\geq 1$ shows that $\phi^k$ takes values in $MI^k$. Hence the series
$\overline f  := \sum_{k\geq 0} (-1)^k \phi^k$ defines an $R$-endomorphism of $M$ 
such that $f \overline f = \overline f f = \Id_M$.
\end{proof}

Let $\IAut_R(M)$ be the group of $R$-linear automorphisms of $M$  that induce the identity at the level of  $M/MI$.
There is a short exact sequence of groups
\begin{equation}   \label{eq:IAut}
\xymatrix{
1 \ar[r] & \IAut_R(M) \ar[r]  &  \Aut_R(M) \ar[r]^-{f\mapsto f_\varepsilon} & \Aut(M/MI) \ar[r] & 1.
}
\end{equation}
The choice of an $R$-linear isomorphism $s:(M/MI)\otimes R \to M$ induces a group homomorphism $s: \Aut(M/MI) \to  \Aut_R(M)$,
which gives a section to the short exact sequence \eqref{eq:IAut}.
For instance, the choice of a basis $x$ of $M$ defines a linear isomorphism between $M/MI$ and $(R/I)^n\simeq \Q^n$,
which induces an $R$-linear isomorphism $s= s_x$ between $(M/MI)\otimes R \simeq \Q^n \otimes R \simeq R^n$ and $R^n \simeq M$.

The following lemma, where $\Hom_R(M,MI)$ is the ideal of $\End_R(M)$ consisting of $R$-linear maps $M \to MI$, is easily proved.

\begin{lemma}
There is a bijection $\IAut_R(M) \stackrel{\simeq }{\longrightarrow} \Hom_R(M,MI)$
defined by the logarithmic series 
$$
 \log(f) =  -\sum_{k=1}^\infty  \frac{( \Id_M -f)^k}{k}, \quad \hbox{for } f \in \IAut_R(M), 
$$
whose inverse is given by the exponential series
$$
\exp(g) =   \sum_{k= 0}^\infty  \frac{g^k}{k!}, \quad \hbox{for }   g \in \Hom_R(M,MI). 
$$
\end{lemma}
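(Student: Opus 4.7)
The plan is to equip $\End_R(M)$ with a natural complete filtration and then reduce the claim to the standard formal-power-series identities $\exp\circ\log=\operatorname{id}$ and $\log\circ\exp=\operatorname{id}$.

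First I would introduce the decreasing filtration $F_k \End_R(M) := \Hom_R(M, MI^k)$ for $k\geq 0$, with $F_0 = \End_R(M)$ and $F_1 = \Hom_R(M, MI)$. Because elements of $\End_R(M)$ are right $R$-linear, one checks $F_k \cdot F_\ell \subset F_{k+\ell}$: if $g \in F_k$ and $h \in F_\ell$, then $g(h(M)) \subset g(MI^\ell) = g(M)\, I^\ell \subset MI^{k+\ell}$. The $I$-adic filtration on $M$ being complete (by the assumption on $R$ and the fact that $M$ is free of finite rank), this filtration on $\End_R(M)$ is also complete.

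Next I would verify convergence of both series and check the asserted codomain conditions. For $f\in \IAut_R(M)$, the endomorphism $\phi := \Id_M - f$ satisfies $\phi \in F_1$ (this is exactly the condition that $f_\varepsilon = \Id_{M/MI}$), so $\phi^k \in F_k$, and the series $\log(f) = -\sum_{k\geq 1} \phi^k / k$ converges in $\End_R(M)$ to an element of $F_1 = \Hom_R(M, MI)$. Conversely, for $g \in \Hom_R(M, MI) = F_1$, we have $g^k \in F_k$, so $\exp(g) = \sum_{k\geq 0} g^k/k!$ converges, and $\exp(g) - \Id_M \in F_1$; by the previous lemma, $\exp(g) \in \IAut_R(M)$.

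Finally, it remains to check that $\exp$ and $\log$ are mutually inverse. Given $f \in \IAut_R(M)$, all of $\Id_M$, $\phi = \Id_M - f$, and the partial sums involved in $\log(f)$ and $\exp(\log(f))$ lie in the (commutative) closed subalgebra of $\End_R(M)$ generated by $\phi$ and $\Id_M$. Inside this complete filtered commutative $\Q$-algebra, the classical formal identities $\exp(\log(1-X)) = 1-X$ and $\log(\exp(Y)) = Y$ apply termwise, yielding $\exp(\log(f)) = f$ and, analogously, $\log(\exp(g)) = g$ for any $g \in \Hom_R(M, MI)$. The only mild subtlety — and thus the one point requiring care — is that the noncommutativity of $R$ plays no role here because the relevant identities only involve powers of a single endomorphism; everything else is a matter of checking that convergence takes place in the correct filtration level.
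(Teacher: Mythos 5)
Your proof is correct, and it fills in, carefully, the argument the paper leaves implicit (the paper merely declares the lemma ``easily proved'' and cites a closely analogous statement, \cite[Proposition 5.12]{massuyeau_IMH}, for the version with Lie algebra derivations). The route you take — equipping $\End_R(M)$ with the multiplicative, complete filtration $F_k=\Hom_R(M,MI^k)$, verifying that both series converge in the right filtration level, and then observing that the identities $\exp\circ\log=\Id$ and $\log\circ\exp=\Id$ hold because each involves only powers of a single element and thus takes place in a commutative closed subalgebra — is exactly the standard argument the authors have in mind.
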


\subsection{The noncommutative  log-determinant}

We consider the same data as in Section~\ref{subsec:group_auto}:
 $(R,\varepsilon)$ is an augmented algebra whose $I$-adic filtration is complete, and  $M$ is a right free {$R$-module} of finite rank. 

We fix an $R$-linear isomorphism $s:(M/MI)\otimes R \to M$, which induces a section  $s: \Aut(M/MI) \to \Aut_R(M)$  of \eqref{eq:IAut}. 
For all $f\in \Aut_R(M)$, we set
$$
\ldet^s(f) := \trace\big( \log\big( f \circ s(f_\varepsilon)^{-1}\big) \big) \in R/[R,R]
$$
where $[R,R]$ denotes the closed subspace of $R$ spanned by commutators. 

\begin{lemma} \label{lem:prop_ldet}
The map $\ldet^s\!:\!  \Aut_R(M) \to R/[R,R]$ has the following properties:
\begin{itemize}
\item[(i)] it is a group homomorphism;
\item[(ii)] it takes values in $I/[R,R]$;
\item[(iii)] if $M= N \oplus N'$ is the direct sum of two free right $R$-modules, 
then  ${\ldet^s(g\oplus g')}= \ldet^s(g) + \ldet^s(g')$ for any $g\in \Aut_R(N)$, $g'\in \Aut_R(N')$;
\item[(iv)] for all $r\in R^\times$, we have $\ldet^s(\Id_M\!\cdot r) = \dim(M)\, \log( r/\varepsilon(r))$.
\end{itemize}
\end{lemma}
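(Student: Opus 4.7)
The plan is to verify the four properties in order, relying throughout on two basic features of the noncommutative trace $\trace : \End_R(M) \to R/[R,R]$: $\Q$-linearity, and cyclic invariance $\trace(gh) = \trace(hg)$, which together imply that $\trace$ vanishes on commutators.

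For (ii), I would first observe that $f \circ s(f_\varepsilon)^{-1}$ lies in $\IAut_R(M)$ by the very definition of the section $s$, so by the preceding lemma its logarithm belongs to the ideal $\Hom_R(M, MI)$ of $\End_R(M)$. Formula \eqref{eq:trace_as_usual}, applied in any basis, then shows that its trace lies in the image of $I$ inside $R/[R,R]$.

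For (i), the key manipulation is
\[
fg \circ s\big((fg)_\varepsilon\big)^{-1} = \alpha \cdot \big( s(f_\varepsilon)\, \beta\, s(f_\varepsilon)^{-1} \big),
\]
where $\alpha := f \circ s(f_\varepsilon)^{-1}$ and $\beta := g \circ s(g_\varepsilon)^{-1}$, using that $s$ is a group homomorphism. Both $\alpha$ and $\beta' := s(f_\varepsilon)\, \beta\, s(f_\varepsilon)^{-1}$ lie in $\IAut_R(M)$. The Baker--Campbell--Hausdorff formula, which converges in the complete filtered Lie algebra $\Hom_R(M, MI) \supset \Hom_R(M, MI^2) \supset \cdots$ (since the composition of two elements of $\Hom_R(M, MI^k)$ lands in $\Hom_R(M, MI^{k+1})$), expresses $\log(\alpha\beta')$ as $\log(\alpha) + \log(\beta')$ plus an infinite sum of iterated commutators. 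Applying $\trace$, the commutator terms vanish and cyclicity also gives $\trace(\log \beta') = \trace(\log\beta)$, producing the desired identity $\ldet^s(fg) = \ldet^s(f) + \ldet^s(g)$.

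Item (iii) comes almost for free: taking the section $s$ for $M$ to be $s_N \oplus s_{N'}$, the endomorphism $(g \oplus g') \circ s((g \oplus g')_\varepsilon)^{-1}$ is the direct sum of the corresponding endomorphisms for $g$ and $g'$, and both $\log$ and $\trace$ respect direct sums of block-diagonal endomorphisms. For (iv), since $(\Id_M \cdot r)_\varepsilon = \varepsilon(r)\, \Id_{M/MI}$ and the section $s$ necessarily maps scalar automorphisms to scalar automorphisms, one obtains $(\Id_M \cdot r) \circ s((\Id_M\cdot r)_\varepsilon)^{-1} = \Id_M \cdot (r/\varepsilon(r))$. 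Writing $u := r/\varepsilon(r) \in 1+I$, a direct expansion of the logarithmic series gives $\log(\Id_M \cdot u) = \Id_M \cdot \log(u)$, whose trace is $\dim(M) \cdot \log(u)$ by \eqref{eq:trace_as_usual}.

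The only step that really requires attention is (i): one must check that BCH genuinely converges in the completed Hom-filtration and that the cyclic invariance of $\trace$ passes to an infinite sum of iterated commutators. Both points are routine consequences of the completeness of the $I$-adic filtration and continuity of $\trace$ for the induced topology, so no substantial obstacle is anticipated.
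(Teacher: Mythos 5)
Your proposal is correct and follows essentially the same approach as the paper: the crux is item (i), and your use of the BCH formula in the complete filtered ideal $\Hom_R(M, MI)$, combined with the vanishing of $\trace$ on commutators and its cyclic invariance (to reduce $\trace\log(s(f_\varepsilon)\beta s(f_\varepsilon)^{-1})$ to $\trace\log\beta$), matches the paper's argument step for step. The paper leaves (ii)--(iv) as ``easily checked''; your verifications of those items are correct and unobjectionable.
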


\begin{proof}
Properties (ii), (iii) and (iv) are easily checked.
We only prove (i): that
$\ldet^s(f \circ h) = \ldet^s(f) + \ldet^s(h)$ for any $f,h\in  \Aut_R(M)$. 
We first assume that $f,h\in  \IAut_R(M)$.
Then, by the BCH formula, we have
$$
\log(fh) = \log(f) +\log(h) + \frac{1}{2} \big[\log(f),\log(h)\big] + \cdots
$$
where $\big[\log(f),\log(h)\big]$ denotes the commutator $$\log(f) \log(h)-\log(h)\log(f) \in \Hom_R(M,MI^2)$$ in the algebra $\End_R(M)$
and the remaining terms are higher-length iterated commutators of $\log(f)$ and $\log(h)$.
Since $ \trace$ vanishes on commutators of $\End_R(M)$ and is filtration-preserving, we deduce that
$$
 \trace\log(fh) =  \trace\log(f) + \trace  \log(h) \ \in R/[R,R]
$$
as desired. Consider now some arbitrary elements $f,h\in \Aut_R(M)$. Then 
\begin{eqnarray*}
&& \trace \log\big( fh \,  s(f_\varepsilon h_\varepsilon)^{-1}\big) \\
&=& \trace \log\big( \big(f s(f_\varepsilon)^{-1}\big) \circ  \big( s(f_\varepsilon)\, h\,  s(h_\varepsilon)^{-1}\, s(f_\varepsilon)^{-1} \big)  \big) \\
&=& \trace \log\big(  f s(f_\varepsilon)^{-1}\big) + \trace \log \big( s(f_\varepsilon)\, h\,  s(h_\varepsilon)^{-1}\, s(f_\varepsilon)^{-1} \big) \\
&=& \trace \log\big(  f s(f_\varepsilon)^{-1}\big) + \trace\big( s(f_\varepsilon)\circ  \log \big(  h\,  s(h_\varepsilon)^{-1}\big) \circ s(f_\varepsilon)^{-1} \big)    \\
&=& \trace \log\big(  f s(f_\varepsilon)^{-1}\big) + \trace   \log \big(  h\,  s(h_\varepsilon)^{-1}\big),
\end{eqnarray*}
where the second equality follows from the previous case.
\end{proof}

\begin{example} \label{ex:ldet_matrices}
Assume that $M=R^n$. Then $M/MI=\Q^n$ and there is an obvious choice of $s$, which we will omit from the notation.
Thus we obtain a map
\begin{equation}
\label{eq:ldet_matrices}
\ldet: \GL(n;R) \longrightarrow R/[R,R]
\end{equation}
defined by 
$$
\ldet(P) :=  -  \sum_{k=1}^\infty  \frac{\trace\big( (I_n- P\, \varepsilon(P)^{-1})^k \big)}{k} 
$$
for any $P\in \GL(n;R)$, where $\varepsilon(P) \in \GL(n;\Q)$ is obtained from $P$ by applying $\varepsilon$ to all entries  
and ``$\trace$'' denotes here the usual trace of matrices.
(See \cite[Definition~3]{kricker} for a similar definition when $R$ is a noncommutative algebra of  formal power series.)

Let $K_1(R)$ be the abelianization of the infinite linear group $\GL(R)$.
By Lemma~\ref{lem:prop_ldet}, the maps \eqref{eq:ldet_matrices} defined for all $n\geq 1$ induce a group homomorphism
$$
\ldet: K_1(R) \longrightarrow R/[R,R]
$$
such that $\ldet(x)= \log(x/\varepsilon(x))$ for any $x\in R^\times$. \hfill $\blacksquare$
\end{example}

Clearly the definition of $\ldet^s(f)$ does not depend on $s$ if $f\in \IAut_R(M)$.
We define the \emph{log-determinant} map to be the composition
$$
\xymatrix{
 \IAut_R(M) \ar[r]^-\log  \ar@/_1.5pc/@{-->}[rr]_-{\ldet} & \Hom_R(M,MI ) \subset \End_R(M)  \ar[r]^-{ \trace} & R/[R,R].
}
$$
We conclude this appendix by observing that this map coincides with the usual log-determinant in the commutative case.

\begin{lemma} \label{lem:usual_logdet}
Assume that  $R$ is a \emph{commutative} algebra of  formal power series. 
Then the usual determinant $\det(f)$ of any $f  \in \IAut_R(M)$ is equal to 
$$
\exp\big( \ldet (f)\big)= \sum_{k= 0}^\infty   \frac{\ldet(f)^k}{k!}  \ \in R.
$$
\end{lemma}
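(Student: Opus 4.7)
Fix a basis of $M$ to identify $\End_R(M)$ with $M_n(R)$, where $n:=\rank_R M$. An element $f \in \IAut_R(M)$ then corresponds to a matrix $F = I_n + \Phi$ with $\Phi \in M_n(I)$, and the logarithm $\log(f)$ from Section~\ref{subsec:group_auto} corresponds to
\[
A \;:=\; \log(F) \;=\; \sum_{k \geq 1} \frac{(-1)^{k+1}}{k}\Phi^k \;\in\; M_n(I),
\]
the series converging in the $I$-adic topology since $\Phi^k \in M_n(I^k)$ and $R$ is complete. Because $R$ is commutative, $R/[R,R] = R$, and formula \eqref{eq:trace_as_usual} identifies $\ldet(f)$ with the usual matrix trace $\trace(A) \in I$. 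Moreover $F = \exp(A)$. Hence the lemma reduces to the classical identity
\[
\det(\exp(A)) \;=\; \exp(\trace(A)) \quad \text{in } R
\]
for every matrix $A \in M_n(I)$, which I now need to establish.

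To prove this, I will introduce a formal parameter $t$ and work over the commutative complete algebra $R[[t]]$. Set
\[
G(t) \;:=\; \exp(tA) \;=\; \sum_{k \geq 0} \frac{t^k}{k!}\, A^k \;\in\; M_n(R[[t]]);
\]
the series is well-defined because $A^k \in M_n(I^k)$, and termwise differentiation yields $G'(t) = A\,G(t) = G(t)\,A$. Expanding $\det(G(t+\varepsilon))$ to first order in $\varepsilon$ by column-multilinearity of the determinant gives Jacobi's formula
\[
\tfrac{d}{dt}\det(G(t)) \;=\; \trace\!\bigl(G'(t)\, G(t)^{-1}\bigr) \cdot \det(G(t)) \;=\; \trace(A) \cdot \det(G(t)).
\]
Writing $\det(G(t)) = \sum_{k \geq 0} c_k t^k \in R[[t]]$, this first-order formal ODE together with $c_0 = \det(G(0)) = 1$ forces the recursion $(k+1)c_{k+1} = \trace(A)\, c_k$, whose unique solution is $c_k = \trace(A)^k/k!$. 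Therefore $\det(G(t)) = \exp(t\,\trace(A))$ in $R[[t]]$, and specialising at $t = 1$ yields the desired identity.

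The only step demanding genuine care is the verification of Jacobi's formula in this purely formal, complete setting; but this follows from multilinearity of the determinant in its columns together with the commutativity of $R[[t]]$, without any analytic input. All the remaining manipulations (termwise differentiation, multiplication of power series, and solution of the first-order recursion on coefficients) are automatic from the $(I,t)$-adic completeness of $R[[t]]$.
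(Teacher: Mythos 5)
Your proof is correct, and it shares the paper's initial reduction: over commutative $R$ we have $R/[R,R]=R$, formula~\eqref{eq:trace_as_usual} gives $\ldet(f)=\trace\log(f)$, and the lemma becomes the classical Jacobi identity $\det(\exp A)=\exp(\trace A)$. The difference is what happens next: the paper simply cites this identity from a reference, whereas you supply a self-contained derivation via the formal-parameter trick. Your derivation is sound in the purely formal setting the paper needs: Jacobi's differentiation formula $\frac{d}{dt}\det G(t)=\trace\bigl(\operatorname{adj}(G(t))\,G'(t)\bigr)$ is a polynomial identity valid over any commutative ring, and combined with the invertibility of $G(t)$ in $M_n(R[[t]])$, the cyclicity of the trace, and $G'(t)=AG(t)$, it yields the first-order formal ODE that you solve by a coefficient recursion. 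The one step you leave implicit is the specialization at $t=1$: the coefficient identity $c_k=\trace(A)^k/k!$ is an identity in $R$, but to pass from it to $\det(\exp A)=\sum_k c_k$ one must note that $c_k\in I^k$ (which follows from $A^k\in M_n(I^k)$, as you observe at the outset) and then use the $I$-adic completeness of $R$ together with the $I$-adic continuity of the polynomial map $\det$ to identify $\det(\exp A)$ with the convergent sum $\sum_k c_k$. Making this evaluation step explicit would tidy the ending; it is a routine verification, not a gap. The overall trade-off is clear: the paper buys brevity by outsourcing the identity, while your route is longer but entirely self-contained and makes visible exactly why the formula holds without any analytic input.
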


\begin{proof}
Recall from Lemma \ref{lem:prop_ldet}  that  $\ldet(f)$ belongs to $I$: 
hence the exponential of  $\ldet(f)$ does converge. Thus the lemma only claims that
$$
\exp  \trace \log(f) = \det(f)
$$
for any $f  \in \IAut_R(M)$.
This is known as the ``Jacobi formula'', which is for instance proved in \cite[Section 1.1.10]{gj}.
\end{proof}

%
%

\bibliographystyle{amsalpha}

\bibliography{Morita_trace}

\end{document}